\newcommand{\sk}{{\ensuremath{\sf k }}}
\DeclareMathOperator{\dist}{dist}
\newtheorem{conjecture}{Conjecture}[section]
\newtheorem{theorem}[conjecture]{Theorem}
\newtheorem{lemma}[conjecture]{Lemma}
\newtheorem{corollary}[conjecture]{Corollary}
\theoremstyle{definition}
\newtheorem{remark}[conjecture]{Remark}
\newtheorem{definition}[conjecture]{Definition}
\newtheorem{notation}[conjecture]{Notation}
\renewcommand\dim{\text{\rm dim}}
\providecommand\hgt{\text{\rm ht}}
\providecommand\max{\text{\rm max}}
\providecommand\reg{{\rm reg}}
\begin{document}
\title{Powers of facet ideals of simplicial trees}
\author[Ajay Kumar]{Ajay Kumar}
\email{ajay.kumar@iitjammu.ac.in}
\address{Department of Mathematics, Indian Institute of Technology Jammu, J\&K, India - 181221.}
\author[Arvind Kumar]{Arvind Kumar}
\email{arvindkumar@cmi.ac.in}
\address{Department of Mathematics, Chennai Mathematical Institute, Siruseri
Kelambakkam, India -603103.}

\begin{abstract}
In this article, we study the linearity of the minimal free resolution of powers of facets ideals of simplicial trees. We give a complete characterization of simplicial trees for which (some) power of its facet ideal  has a linear resolution. We calculate the regularity of the $t$-path ideal of a perfect rooted tree. We also obtain  an upper bound for the regularity  of the $t$-path ideal of a rooted tree. We give a procedure to calculate the regularity of powers of facet ideals of simplicial trees. As a consequence of this result, we study the regularity of powers of $t$-path ideals of rooted trees. We pose a regularity upper bound conjecture for facet ideals of simplicial trees, which is as follows: if $\Delta$ is a $d$-dimensional simplicial tree, then $\reg(I(\Delta)^s) \leq (d+1)(s-1)+\reg(I(\Delta))$ for all $s \geq 1$.  We prove this conjecture for some special classes of simplicial trees.
\end{abstract}

\keywords{Linear resolution, linear quotient, Castelnuovo-Mumford regularity, simplicial tree, $t$-path ideals of rooted forests}
\thanks{AMS Subject Classification (2020): 13D02, 05E40, 05E45}
\maketitle
\section{Introduction}
Studying homological properties of powers of square-free monomial ideals is an active research topic in commutative algebra.  There is a one-to-one correspondence between square-free monomial ideals and simplicial complexes. To each simplicial complex $\Delta$ on the vertex set $\{x_1,\ldots,x_n\},$ one can associate a square-free monomial ideal 
$$I(\Delta)=\left( \left\{ \prod\limits_{x \in F}x: \text{F~ is~ a ~facet ~of}~ \Delta \right\}\right).$$ In particular, any graph $G$ can be naturally viewed as a $1$-dimensional simplicial complex, and the facet ideal $G$ is known as the edge ideal in the literature. One of the basic problems is to find a characterization of square-free monomial ideals whose all powers have a linear resolution in terms of the combinatorial data of underlying simplicial complexes. Fr{$\ddot{\text{o}}$}berg in \cite{froberg} characterized  quadratic square-free monomial ideals having a linear resolution. In \cite{FC}, Faridi and Cannon gave a combinatorial characterization of  square-free monomial ideals with linear resolution in characteristic $2$.  Herzog and Takayama \cite{Herzog'sBook,HTY} introduced the notion of polymatroidal ideals and proved that all powers have a linear resolution for these ideals. Herzog, Hibi, and Zheng \cite{HHZ} proved that if $I \subset \sk[x_1,\ldots,x_n]$ is a monomial ideal with $2$-linear resolution, then each power has a linear resolution.  In general, if a homogeneous ideal  $I$ has a linear resolution, then all powers of $I$ need not have a linear resolution. Over a  field of characteristic  $0$, the ideal $I=(abd,abf,ace,adc,aef,bde,bcf,bce,cdf,def)$  has a linear resolution, while $I^2$ has no linear resolution. This example is due to Terai, which appeared in \cite[Remark 3]{Conca2000}. Sturmfels in \cite{Sturmfels} gave a characteristic free example of a homogeneous ideal $I$ with a linear resolution, but $I^2$ does not have a linear resolution.  Sturmfels showed that the square-free monomial ideal $I = (def, cef, cdf, cde, bef, bcd, acf, ade)$ has a linear quotient (with respect to the given order on generators), but $I^2$ has no linear resolution. In particular, $I^2$ does not have a linear quotient with respect to any ordering of minimal generators.   

This article focuses on the powers of facet ideals of  simplicial trees. Zheng \cite{Zheng} gave a combinatorial characterization of simplicial trees 
 whose facet ideals admit a linear resolution. We study the linearity of the minimal free resolution of all powers of facet ideals of simplicial trees. The first main result of this article is as follows: \\\\
{\bf Theorem A.} (see Theorem \ref{linear})
Let $\Delta$ be a simplicial tree on the vertex set $[n].$ Then, the followings are equivalent:
\begin{enumerate}
\item $\Delta$ satisfies intersection property.
    \item $I(\Delta)$ has a linear resolution.
    \item $I(\Delta)^s$ has linear quotients for all $s \ge 1$ and $\Delta$ is pure.
    \item $I(\Delta)^s$ has linear quotients for some $s$ and $\Delta $ is pure.
    \item $I(\Delta)^s$ has a linear resolution for some $s.$
    \item $I(\Delta)^s$ has linear first syzygies for some $s.$
\end{enumerate}

The Castelnuovo–Mumford regularity (in short, regularity) of a homogeneous ideal is one of the most important invariants in commutative algebra.  Section \ref{section4} studies the regularity of $t$-path ideals of a rooted tree $\Gamma$.  The $t$-path ideal of a directed graph $\Gamma$, denoted by $I_t(\Gamma)$, was introduced by He and Van Tuyl in \cite{VanH}. They proved that the $t$-path ideal of a rooted tree is the facet ideal of some simplicial tree, and the Betti numbers, in this case, do not depend on the characteristic of the base field. Bouchat, H\`a, and O'Keefe in \cite{BRT} studied the $t$-path ideals of a rooted tree and gave a recursive formula to compute the graded Betti numbers of $I_t(\Gamma)$ and also obtained a general bound for the regularity of $I_t(\Gamma)$. Kiani and Madani in \cite{KMS} calculated the  graded Betti numbers of the path ideal of a  rooted tree in terms of combinatorial invariants of the associated rooted tree.  In this article, we obtained the regularity of $t$-path ideal of a perfect rooted tree (see Section \ref{section4} for the definition) when $\left\lceil{\frac{\hgt(\Gamma)+1}{2}}\right\rceil \leq t \leq \hgt(\Gamma) +1$, where $\hgt(\Gamma)$ denotes the height of $\Gamma$. \\\\
{\bf Theorem B.} (see Theorem \ref{maint}) Let $(\Gamma,x_0)$ be a perfect rooted tree with $\hgt(\Gamma) \geq 1$ and let $t$ be a positive integer such that $\left\lceil{\frac{\hgt(\Gamma)+1}{2}}\right\rceil \leq t \leq \hgt(\Gamma) +1$.  Then $$ \reg\left(\frac{R}{I_t(\Gamma)}  \right) = \sum\limits_{\ell_{\Gamma}(x)=\hgt(\Gamma) -t}^{\hgt(\Gamma)-2}\deg^{+}_{\Gamma}(x).$$

As a consequence of the above result, we obtain an improved upper bound for the regularity of $I_t(\Gamma)$ when  $\left\lceil{\frac{\hgt(\Gamma)+1}{2}}\right\rceil \leq t \leq \hgt(\Gamma) +1$.\\\\
{\bf Theorem C} (see Corollary \ref{cormain}).
Let $(\Gamma,x_0)$ be a rooted tree with $\hgt(\Gamma)\geq 1$ and let $t$ be a positive integer such that $\left\lceil{\frac{\hgt(\Gamma)+1}{2}}\right\rceil \leq t \leq \hgt(\Gamma) +1$. Then 
$$\reg\left(\frac{R}{I_t(\Gamma)}\right) \leq  \sum\limits_{\ell_{\Gamma}(x)=\hgt(\Gamma) -t}^{\hgt(\Gamma)-2}\deg^{+}_{\Gamma}(x)+\sum\limits_{i=t-1}^{\hgt(\Gamma)-2}(\hgt(\Gamma)-i-1)l_{i}(\Gamma),$$ where $l_{i}(\Gamma)$ denotes the number of leaves in $\Gamma$ at the level $i$.

Studying the regularity of powers of homogeneous ideals has been a central research topic in commutative algebra and algebraic geometry in the last two decades. Cutkosky, Herzog, Trung  \cite{CHT} and independently Kodiyalam  \cite{vijay} proved that for a homogeneous ideal $I$ in a polynomial ring, the regularity function $\reg({I}^s)$ is asymptotically a linear function. Since then, many researchers have explored the regularity function for various classes of homogeneous ideals (cf. \cite{BC17,BCV15,Conca2000,HTT,HHZ,nguyen_vu,JKS20,Sturmfels}). However, more focus is given to the regularity function of edge ideals (cf. \cite{ABS,banerjee,BBSHA,BCDMS,BHT,RYJN,Er,Er2,Gu,jayanthan,JSS,JS,KK23,KKS21}). In the case of edge ideals, the regularity function can be explicitly described or bounded in terms of combinatorial invariants of associated graphs.  Banerjee, Beyarslan and H\`a \cite{BBH17} conjectured that   $\reg(I(G)^s) \leq 2s + \reg(I(G)) -2$ for any graph $G$ and for any $s \geq 1$. This conjecture has gotten much attention and has been studied by various well-known mathematicians working in commutative algebra. So far, this conjecture has been proved for various classes of graphs, namely, gap-free and cricket-free graphs, cycles and unicyclic graphs, very well-covered graphs, bipartite graphs, gap-free and diamond-free graphs, gap-free and $C_4$-free graphs, Cameron-Walker graphs, subclasses of bicyclic graphs (cf. \cite{ABS,banerjee,BBSHA,BHT,RYJN,Er,Er2,Gu,JSS,KKS21}).  However, the example  of a monomial ideal $I = (def, cef, cdf, cde, bef, bcd, acf, ade)$ due to Sturmfels \cite{Sturmfels} shows that this conjecture can not be extended to facet ideals of higher dimensional simplicial complexes. When $\Delta$ is a $d$-dimensional simplicial tree, it follows from  \cite{HHTZ} and \cite[Corollary 2.4]{hien2021} that $\reg(I(\Delta)^s) \leq (d+1)(s-1)+\dim(R/I(\Delta))+1$ for all $s$.  Based on computational evidence  and the result mentioned above, we believe that  $\reg(I(\Delta)^s) \leq (d+1)(s-1)+\reg(I(\Delta))$ for all $s$, when $\Delta$ is a $d$-dimensional simplicial tree. Therefore, we pose the following conjecture:\\\\
{\bf Conjecture D.} Let $\Delta$ be a $d$-dimensional simplicial tree on the vertex set $[n].$ Then, $$\reg(I(\Delta)^s) \leq (d+1)(s-1)+\reg(I(\Delta)) \text{ for all } s \geq 1.$$

In Section \ref{section5}, we prove this conjecture for some special classes of simplicial trees.\\\\
{\bf Theorem E.} (see Theorem \ref{broommain})
Let $(\Gamma, x_0)$ be a broom graph of $\hgt(\Gamma)=h$ and $2 \leq t \leq h+1$. Then for all $s \geq 1,$ $$\reg{\left(\frac{R}{I_t(\Gamma)^s}\right)} = t(s-1)+\reg\left(\frac{R}{I_t(\Gamma)}\right).$$ \\
{\bf Theorem F.} (see Theorem \ref{perfectr})
Let $(\Gamma,x_0)$ be a perfect rooted tree with $\hgt(\Gamma) \geq 1$ and let $t=\hgt(\Gamma)+1$.  Then,  for all $s \geq 1$, $$\reg{\left(\frac{R}{I_t(\Gamma)^s}\right)} = t(s-1)+\reg\left(\frac{R}{I_t(\Gamma)}\right).$$

\vskip 2mm
\noindent
\textbf{Acknowledgement:}  
The first author acknowledges the financial support from the Sciences and Engineering Research Board, India, under the MATRICS Scheme (MTR/2022/000543). The second author is partially supported by National Postdoctoral Fellowship (PDF/2020/001436) by the Sciences and Engineering Research Board, India. Some of this work was completed when the second author visited IIT Jammu for  ten days in Feb 2023.

\section{Regularity of powers of facet ideals of simplicial trees}\label{section3}
In this section, we study the regularity of powers of facet ideals of simplicial trees. In \cite{Zheng}, Zheng characterized simplicial trees whose facet ideals have a linear resolution. We characterize the linearity of the resolution of powers of facet ideals of simplicial trees. First, we recall some basic notation and terminology used in the paper. 

For any positive integers $l,m$, we use  notation $[l,m] $ for the set $\{k \in \mathbb{N}: l\leq k \leq m\}$. For $n \in \mathbb{N}$, we denote the set $[1,n]$ by $[n]$. Let $R=\sk[x_1,\ldots,x_n]$ be a standard graded polynomial ring over a field $\sk$ in $n$ variables and $I$ be a homogeneous ideal of $R$. The minimal graded free resolution of $I$ is given by
\[
0\rightarrow\bigoplus_jR(-j)^{\beta_{\ell,j}^R}\rightarrow\cdots\rightarrow\bigoplus_jR(-j)^{\beta_{i,j}^R}\rightarrow\cdots \rightarrow \bigoplus_jR(-j)^{\beta_{0,j}^R}\rightarrow I\rightarrow 0,
\] 
where $R(-j)$ is a graded free $R$-module of rank one generated by a homogeneous element of degree $j$. The numbers $\beta_{i,j}^R$ are called the \textit{graded Betti numbers} of  $I$. If $I$ is equigenerated by homogeneous elements of degree $t$ and $\beta_{1,j}^R(I)=0$ for all $j \neq 1+t$, then we say that $I$ has {\it linear first syzygy.} We say that $I$ admits a \textit{linear resolution} if for all $i\ge 0$, $\beta_{i,j}^R(I)=0$ for $j \neq i+t$.
The {\it regularity} of  $I$, denoted as $\text{reg}(I)$, is defined as $$\text{reg}(I)=\max\{j-i:\beta_{i,j}^R(I)\neq 0\}.$$ Observe that  $\reg(R/I) =\reg(I)-1.$

Next, we recall the concept of simplicial complex and some relevant properties which we use throughout this article. 

A simplicial complex $\Delta$ on the vertex set $[n]$ is a collection of subsets of $[n]$ which satisfies the following properties:
\begin{enumerate}[(1)]
    \item $\{i\} \in \Delta$ for all $1 \leq i \leq n$,
    \item $G \in \Delta$ whenever $F \in \Delta $ and $G \subset F.$
\end{enumerate}

Let $\Delta$ be a simplicial complex on the vertex set $[n].$ An elements $F$ of $\Delta$ is called a  \textit{face}. The \textit{dimension of a face} $F$ is $|F| - 1$. A \textit{facet} of $\Delta$ is a face that is maximal under inclusion. The  \textit{dimension} of $\Delta$, denoted by $\dim(\Delta)$, is the maximum among dimension of facets of $\Delta$. A simplicial complex is said to be \textit{pure} if all of its facets have the same dimension. 

 We denote the simplicial complex $\Delta$ with facets $F_1,\ldots,F_r$  by $\Delta=\langle F_1,\ldots,F_r  \rangle.$
 Set $m_i=\prod\limits_{j \in F_i}x_j$ for $1 \leq i \leq r.$ The {\it facet ideal of $\Delta$} is the monomial ideal in $R=\sk[x_1,\ldots, x_n]$ generated by $\{m_1 ,\ldots,m_r\}$, and it is denoted by $I(\Delta)$. A vertex $v$ is said to be {\it free vertex} if $v$ belongs to exactly one facet. A facet $F$ of $\Delta$ is said to be a {\it leaf} if either $F$ is the only facet of $\Delta$ or there exists a facet $G \neq F$ of $\Delta$ such that $H \cap F \subset G \cap F$ for all facets $H$ of $\Delta$ other than $F$. Further, a face $G$ with the above property is called a {\it branch} of $F$. A simplicial complex $\Delta$ is said to be a {\it simplicial forest} if each subcomplex of $\Delta$ has a leaf. Faridi introduced this notation in \cite{SF}. Later, Zheng in \cite{XZ} introduced the notation of a good leaf. A leaf $F$ of $\Delta$ is said to be a {\it good leaf} if $F$ is a leaf of every subcomplex of $\Delta$ containing $F$. Equivalently,   $F$ is a good leaf if $\{F \cap G \; : \;  G \text{ is a facet of } \Delta \}$ is a total order set with respect to inclusion, see \cite{HHTZ}. A {\it good leaf order on the facets of $\Delta$} is an ordering of facets  $F_1,\ldots, F_r$  such that $F_i$ is a good leaf of the subcomplex with the facets $F_1,\ldots,F_i$ for every $ 2 \leq i \leq r$. Herzog et al. in \cite{HHTZ} proved that a simplicial complex $\Delta$ is a simplicial forest if and only if the facets of $\Delta$ admit a good leaf ordering.

Next, we recall some necessary definitions from \cite{Zheng}. A simplicial complex $\Delta$ is said to be {\it connected}, if for any two facets $G$ and $H$ there is a sequence of facets
$G=G_0,\ldots,G_l=H$, satisfying  $G_j \cap G_{j+1}\neq \emptyset $ for all $j=0,1,\ldots,l.$ A sequence of facets with above property is called a {\it chain} between $G$ and $H$, and the number  $l$ is called the {\it length} of this chain.
And a simplicial complex $\Delta$ is said to be {\it connected in codimension $1$,} if for any two facets $G$ and $H$ with $\dim(G)\geq \dim (H)$, there exists a chain $\mathcal{C} : G = G_0,\ldots, G_l = H$ satisfying $\dim(G_i \cap G_{i+1})=\dim(G_{i+1}) -1$ for all $i=0,1,\ldots,l-1$, and  such a chain between $G$ and $H$ is called  a {\it proper chain } between $G$ and $H$. Let $G$ and $H$ be two facets  of a  simplicial complex $\Delta$ and  $\mathcal{C}$ be a (proper) chain between $G$ and $H$. Then $\mathcal{C}$ is {\it irredundant} if  no subsequence of  $\mathcal{C}$ except itself is a (proper) chain between $G$ and $H$.

Let $\Delta$ be a $d$-dimensional pure simplicial tree connected in codimension 1. Zheng proved that 
\cite[Propositin 1.17]{Zheng} for
any two facets $G$ and $H,$ there exists a unique irredundant proper chain between $G$
to $H$. The length of the unique irredundant proper chain between any two facets $G$ and $H$ is called the distance between $G$ and $H$, and it is denoted by $\dist_{\Delta}(G,H).$ If for any two facets $G$ and $H$, $\dim (G \cap H) = d-\dist_{\Delta}(G,H),$ then we say that $\Delta$ satisfies the {\it intersection property}.

We now study the linearity of resolution of facet ideal of simplicial trees. We first prove some auxiliary lemmas.

\begin{lemma}\label{lemmag}
Let $\Delta$ be a simplicial tree on the vertex set $[n].$ Suppose that $I(\Delta)$ has a linear resolution, equivalently \cite[Theorem 3.17]{Zheng} $\Delta$ satisfy the intersection property. Then, there exists an ordering on the facets of $\Delta$, say $F_1,\ldots,F_r$, such that
\begin{itemize}
    \item $F_1,\ldots,F_r$ is a good leaf ordering on the facets of $\Delta;$
    \item $\dist_{\Delta}(F_i,F_{i+1}) =1$ for all $1 \le i \le r-1.$
\end{itemize}
\end{lemma}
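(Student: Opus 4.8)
The plan is to induct on the number $r$ of facets, peeling a good leaf off the ``top'' at each stage while arranging that the facet removed next sits at distance $1$ from it. Since $I(\Delta)$ has a linear resolution it is equigenerated, so $\Delta$ is pure of dimension $d$; this, together with the intersection property, guarantees that $\Delta$ is connected in codimension $1$, so $\dist_\Delta$ and the intersection property are available throughout.

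First I would record two structural facts. \emph{(i) Neighbours of a good leaf.} If $F$ is a good leaf with branch $G$, then the first step of the unique irredundant proper chain from $F$ to $G$ produces a facet meeting $F$ in a codimension-$1$ face, which by the leaf property is contained in $F\cap G$; hence $\dim(F\cap G)=d-1$, so $\dist_\Delta(F,G)=1$, and every facet $H$ with $\dim(F\cap H)=d-1$ satisfies $F\cap H=F\cap G=:W$. Thus the distance-$1$ neighbours of $F$ are exactly the facets $\ne F$ containing $W$ (a ``book'' $N$ along the spine $W$), and they are pairwise at distance $1$. \emph{(ii) Leaf removal preserves the hypotheses.} A good leaf $F$ can never be an interior vertex of an irredundant proper chain between two other facets: its two chain-neighbours would both meet $F$ in $W$, hence contain $W$ and be at distance $1$, letting the chain bypass $F$ and contradicting irredundancy. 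Consequently $\Delta':=\Delta\setminus F$ is again pure, a simplicial forest, connected in codimension $1$, satisfies the intersection property, and $\dist_{\Delta'}=\dist_\Delta$ on its facets.

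\noindent\textbf{Key Claim.} \emph{If $F$ is a good leaf of $\Delta$, then $\Delta'=\Delta\setminus F$ has a good leaf $F'$ with $\dist_\Delta(F,F')=1$, equivalently with $W\subseteq F'$.} Granting this, pick any good leaf $F_r:=F$, use the Key Claim to choose a good leaf $F_{r-1}$ of $\Delta'$ at distance $1$ from $F_r$, and recurse on $\Delta'$ (legitimate by (ii)); reading the removal order backwards yields $F_1,\dots,F_r$, which is a good leaf ordering because each $F_i$ was chosen as a good leaf of $\langle F_1,\dots,F_i\rangle$, and $\dist_\Delta(F_i,F_{i+1})=1$ for all $i$ since distances in the successive subcomplexes agree with those in $\Delta$.

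The Key Claim is where the real work lies, and I expect it to be the main obstacle. I would reduce it to showing that \emph{some facet of the book $N$ is a good leaf of $\Delta'$}. Writing $F=W\cup\{f\}$ (with $f$ free) and each book facet as $A_i=W\cup\{a_i\}$, a direct comparison of intersections shows that, because $F$ is a good leaf, the family $\{W\cap H : H\in\Delta\}$ is totally ordered, so the only way $A_i\cap H$ and $A_i\cap H'$ can fail to be comparable is the ``conflicting'' configuration $a_i\in H$, $a_i\notin H'$, $W\cap H\subsetneq W\cap H'$. The heart of the matter is to rule this out for a well-chosen $A_i$: such a configuration forces either two facets meeting $F$ in incomparable subfaces of $W$ (contradicting that $F$ is a good leaf), or two facets whose intersection dimension is incompatible with their distance along the unique irredundant proper chain (contradicting the intersection property). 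The delicate point—and the step I expect to be hardest—is packaging this local comparison into a global existence statement valid in arbitrary dimension $d$. I anticipate doing so by an extremal choice of $A_i$, namely a book facet whose tip $a_i$ is a free vertex of $\Delta$, whose existence I would extract from the interplay of the totally ordered family $\{W\cap H\}$ with the metric induced by the intersection property; the comparison above then shows this $A_i$ is a good leaf of $\Delta'$ lying in $N$, closing the induction.
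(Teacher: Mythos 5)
Your induction scheme coincides with the paper's: peel off a good leaf $F$, check that $\Delta'=\Delta\setminus F$ inherits all hypotheses, and reduce everything to the Key Claim that $\Delta'$ has a good leaf at distance $1$ from $F$. Your facts (i) and (ii) are correct (the paper quotes them from Zheng's results rather than reproving them), the reduction is sound, and the Key Claim is exactly what the paper proves. The gap is your proof of the Key Claim: the extremal object you rely on --- a book facet $A_i=W\cup\{a_i\}$ whose tip $a_i$ is a free vertex of $\Delta$ --- need not exist. Consider the $1$-dimensional simplicial tree with facets $F=\{x_1,x_2\}$, $A=\{x_2,x_3\}$, $B=\{x_3,x_4\}$ (a path on four vertices). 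It satisfies the intersection property, $F$ is a good leaf with $W=\{x_2\}$, and the book of $F$ is the single facet $A$, whose tip $x_3$ lies in $B$, hence is free neither in $\Delta$ nor in $\Delta'=\langle A,B\rangle$. The good leaf of $\Delta'$ at distance $1$ from $F$ is $A$ itself, but its free vertex in $\Delta'$ is $x_2\in W$ --- exactly the configuration your tip-based comparison never treats. So the induction does not close as written.

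The fallback dichotomy you sketch (``the conflicting configuration forces either incomparable intersections with $F$ or a distance/dimension contradiction'') also cannot hold for an arbitrary book facet, so it cannot substitute for the extremal choice: with facets $F=\{f,w\}$, $A_1=\{w,a_1\}$, $A_2=\{w,a_2\}$, $B=\{a_1,b\}$ (again a simplicial tree satisfying the intersection property), the conflicting configuration occurs for $A_1$ with $H=B$, $H'=A_2$, yet there is no contradiction: $A_1$ is simply not a leaf of $\Delta'$. Any correct argument must therefore first locate the right book facet, and that is where the real work sits. The paper does this in two stages: first it shows, by contradiction with the intersection property (with separate cases according to whether the book has one or several facets, constructing auxiliary facets $M_i$ and $N$), that some book facet has a free vertex \emph{in} $\Delta'$ --- possibly a vertex of $W$ rather than the tip; then it verifies that such a facet is a good leaf of $\Delta'$ by a second case analysis according to whether that free vertex lies in $F$. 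Your proposal states the right target, and your observation that a free tip would force $A_i$ to be a good leaf of $\Delta'$ is correct, but the existence step --- which you yourself flag as the crux --- is false as formulated and unproven otherwise.
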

\begin{proof}
 The case when the number of facets of $\Delta$ is one trivially holds. Assume that the number of facets of $\Delta$ is more than one. It follows from  \cite[Corollary 3.4]{HHTZ} that a good leaf exists, say $F_r$, in $\Delta$, where $r$ is the number of facets of $\Delta$. Set $\Delta_1=\Delta\setminus \{F_r\}.$ Since $I(\Delta)$ has a linear resolution, by \cite[Proposition 3.9]{Zheng}, $\Delta$ is pure and connected in codimension one.  Therefore, by \cite[Corollary 1.15]{Zheng}, $\Delta_1$ is a pure simplicial tree connected in codimension one, and consequently, by \cite[Lemma 3.11]{Zheng},  $I(\Delta_1)$ has a linear resolution. We claim that there exists a facet $F_{r-1}$ of $\Delta_1$ such that 
\begin{itemize}
    \item $F_{r-1}$ has a free vertex in $\Delta_1$;
    \item $\dist_{\Delta}(F_{r-1},F_{r}) =1$.
\end{itemize} 
Let $H_1, \ldots, H_k$ be the facets of $\Delta$ such that $\dist_{\Delta}(F_r, H_i) =1$ for all $1 \le i \le k.$ If possible, we assume that the claim does not hold, i.e., for each $1 \le i \le k $, $H_i$ has no free vertex in $\Delta_1.$ Since $\dist_{\Delta}(F_r,H_i) =1$, $|F_r \cap H_i|=|F_r|-1$ for each $1 \le i \le k.$ Consequently, $F_r \cap H_i=F_r\cap H_j=H_i\cap H_j$ for each $1 \le i<j \le k$ as $F_r$ is a good leaf of $\Delta.$ Now, for each $1 \le i \le k$, there exists a vertex $x_i \in H_i \setminus F_r$ such that $H_i=\{x_i\} \cap (H_i \cap F_r).$ It is easy to note that $x_1,\ldots, x_k$ are distinct. Since $H_i$ has not a free vertex in $\Delta_1$, there exists a facet $M_i$ of $\Delta_1$ such that $x_i \in H_i \cap M_i.$ With the help of \cite[Corollary 1.13]{Zheng} we can even assume that $|M_i \cap H_i|=|H_i|-1$. By construction, $F_r,H_i,M_i$ is a proper chain in $\Delta. $ Since $F_r$ is a good leaf in $\Delta$, $M_i \cap F_r \subseteq H_i \cap F_r$. Observe that $|M_i \cap F_r| <|F_r|-1$ as if $|M_i \cap F_r| =|F_r|-1$, then  $M_i \cap F_r = H_i \cap F_r$ which implies that $M_i=H_i$. Consequently, $F_r,H_i,M_i$ is a proper irredundant chain in $\Delta$ and $|M_i\cap F_r|=|F_r|-2$ which implies that  $M_i\cap F_r =M_j \cap F_r$ for each $1 \le i,j \le k$. Also, there exists $z_i \in (F_r \cap H_i)$ such that $z_i \not\in M_i$. Now, we have the following cases:
\begin{enumerate}
    \item[Case 1.] Suppose that $k\ge 2.$  Then, $M_1,H_1, H_2,M_2$ is a proper chain in $\Delta_1. $ Suppose that $|M_1 \cap H_2|\ge |H_2|-1$. As $x_1 \in M_1 \setminus H_2$,  $M_1=(M_1\cap H_2) \cup \{x_1\}$. This will  impose that $x_2 \in M_1$, which further implies that the collection of facets $\{M_1, H_1, H_2\}$ of $\Delta_1$ has not a leaf. We have a contradiction. Therefore, $x_2 \not\in M_1$ and $|M_1 \cap H_2|<|H_2|-1.$ Similarly, $x_1 \not\in M_2$ and $|H_1 \cap M_2|<|M_2|-1.$ 

  Observe that $z_1,z_2,x_1,x_2 \not\in M_1 \cap M_2$ and $z_1,z_2 \not\in \{x_1,x_2\}$. Consequently, $|M_1 \cap M_2| <|M_2|-1.$ This conclude that  $M_1,H_1, H_2,M_2$ is a proper irredundant chain in $\Delta_1.$ Therefore, $\dist_{\Delta_1}(M_1,M_2)=3.$ Since $I(\Delta_1)$ has a linear resolution, $\Delta_1$ satisfies intersection property which implies that  $|M_1 \cap M_2|= |M_1|-3.$ Since $M_1 \cap F_r =M_2 \cap F_r \subseteq M_1 \cap M_2$, we get that $|M_1 \cap M_2| \ge |M_2|-2$ which is a contradiction. 
  \item[Case 2.] Suppose that $k=1.$ Since $z_1 \in H_1\setminus M_1$ and $H_1$ has no free vertex in $\Delta_1$, there exists a facet of $N$ of $\Delta_1$ such that $z_1\in H_1 \cap N.$ Without loss of generality, by \cite[Corollary 1.13]{Zheng} we can assume that $|H_1\cap N|=|H_1|-1.$ In $\Delta$, we have $N\cap F_r \subseteq H_1\cap F_r$, as $F_r$ is a good leaf of $\Delta.$ Now, $F_r,H_1,N$ is a proper irredundant chain in $\Delta.$ Therefore, $\dist_{\Delta}(F_r,N)=2.$ Observe that $F_r\cap N \not\subset F_r\cap M_1$ as $z_1\in (F_r\cap N)\setminus M_1.$ Therefore, $F_r\cap M_1 \subseteq F_r\cap N$ which implies that $(F_r\cap M_1)\sqcup \{z_1\} \subseteq F_r\cap N$. Thus, $|F_r\cap N|= |F_r|-1$. Since $I(\Delta)$ has linear resolution, $\dist_{\Delta}(F_r,N)=1$ which is contradiction. 
\end{enumerate} Thus, in both scenarios, we have a contradiction. Thus, for some  $1 \le i \le k$, $H_i$ has a free vertex in $\Delta_1$. Assume that $H_1$ has a free vertex in $\Delta_1$ without loss of generality. Next, we claim that $H_1$ is a good leaf of $\Delta_1$. Let $w \in H_1$ be a free vertex of $H_1$ in $\Delta_1$. Now, we have the following two cases:
\begin{enumerate}
\item[Case a.] Suppose that $w \not\in F_r.$ Then, $w=x_1.$ For any facet, $P$ of $\Delta_1$ such that $P \ne H_1$, $P\cap H_1=P \cap (\{x_1\} \sqcup (H_1\cap F_r))=(P\cap \{x_1\}) \sqcup (P\cap  H_1 \cap F_r) =P\cap H_1 \cap F_r.$ This implies that $P\cap H_1=P \cap F_r$.  Let $A, B$ be any two facets of $\Delta_1$ such that $A,B \not\in \{H_1\}.$ In $\Delta$, either $A\cap F_r \subseteq B\cap F_r$ or $B\cap F_r \subseteq A\cap F_r$ as $F_r$ is a good leaf of $\Delta$ which implies that either $A\cap H_1 \subseteq B\cap H_1$ or $B\cap H_1 \subseteq A\cap H_1$. Thus, $H_1$ is a good leaf of $\Delta_1.$
\item[Case b.] Suppose that $w \in F_r.$ If possible, assume that $H_1$ is not a leaf of $\Delta_1.$ Then, there exist two facets $A, B$ of $\Delta_1$ other than $H_1$ such that $A\cap H_1 \not\subset B\cap H_1$ and $B \cap H_1 \not\subset A \cap H_1.$ Since $F_r$ is a leaf in $\Delta,$ assume, without loss of generality that $ A\cap F_r \subseteq B\cap F_r.$ Observe that $x_1 \in A $ and $x_1\not\in B$ as if $x_1 \notin A$ or $x_1 \in B$, then $A\cap H_1\subseteq B\cap H_1$. Therefore, $B\cap H_1=B\cap F_r \neq \emptyset$ and $|B\cap H_1|\le |H_1|-2$. Let $|B\cap F_r|=|B\cap H_1|=|F_r|-j$ for some $j\ge 2.$ Since $\Delta_1$ satisfies intersection property, there exists a unique proper irredundant chain of length $j$, say $H_1=B_0,B_1,\ldots, B_j=B$. Next, note that $F_r,H_1=B_0,B_1,\ldots,B_j=B$ is a proper irredundant chain in $\Delta $ between $F_r$ and $B.$ As $\Delta $ satisfies intersection property $j+1=\dist_{\Delta}(F_r,B)=|F_r|-|B\cap F_r|=j$ which is a contradiction. Therefore, $H_1$ is a good leaf of $\Delta_1$. 
\end{enumerate}
Take $F_{r-1}=H_1$ is a good leaf of $\Delta_1=\Delta\setminus \{F_r\}.$ Now, using the claim successively, we get a desired ordering of facets of $\Delta.$ 
\end{proof}
\begin{lemma}\label{lemo}
Let $\Delta$ be a simplicial tree on the vertex set $[n]$ such that $I(\Delta)$ has a linear resolution, equivalently \cite[Theorem 3.17]{Zheng} $\Delta$ satisfy the intersection property. Then, the ordering given in Lemma \ref{lemmag} also satisfies the following:
\begin{enumerate}[\rm(a)]
    \item if there exists $x \in F_j \setminus F_i$ for some $j<i$, then $x \notin F_k$ for all $k \geq i$;
    \item for any $j<i$, there exists $k \in [j,i-1]$ such that $|F_k \cap F_i|=|F_i|-1$ and $F_j \cap F_k \not\subset F_i$.
\end{enumerate}
\end{lemma}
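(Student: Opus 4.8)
The plan is to leverage the good leaf ordering from Lemma \ref{lemmag}, whose two defining properties (good leaf ordering and $\dist_{\Delta}(F_i,F_{i+1})=1$) together with the intersection property will be the engine driving both parts. For part (a), I would argue by contradiction: suppose $x \in F_j \setminus F_i$ with $j < i$ but $x \in F_k$ for some $k \geq i$. The key structural fact I want to exploit is that in a good leaf ordering, the later facets $F_i, F_{i+1}, \ldots, F_r$ form a sub-simplicial-tree in which each later facet is a good leaf of the subcomplex it terminates, and the intersection pattern is controlled by the distance function. Since $x$ belongs to $F_j$ and to $F_k$ but not to $F_i$, and $j < i \leq k$, I would look at the unique irredundant proper chains connecting these facets and derive a dimension inequality via the intersection property that contradicts purity. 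Concretely, I expect to use that $F_i$ being a good leaf (or its branch) in the subcomplex $\langle F_1,\ldots,F_i\rangle$ forces any vertex shared by an earlier and a later facet, but absent from $F_i$, to violate the branch condition $H \cap F_i \subseteq G \cap F_i$.

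For part (b), the statement is essentially an assertion that consecutive-distance-one chains can be routed through the ordering respecting indices. Given $j < i$, I would invoke the intersection property to get the unique irredundant proper chain from $F_j$ to $F_i$; because $\dim(F_j \cap F_i) = |F_i| - \dist_\Delta(F_j,F_i)$ and the facets are pure of the same dimension, the second-to-last facet $F_k$ in this chain satisfies $|F_k \cap F_i| = |F_i| - 1$. The nontrivial content is that this penultimate facet $F_k$ can be taken with index in the range $[j, i-1]$, and that $F_j \cap F_k \not\subset F_i$. The index constraint I would establish using part (a): if $F_k$ had index $\geq i$, then a vertex of $F_k \setminus F_i$ (which exists since $|F_k \cap F_i| = |F_i|-1 < |F_i|$) lying in the chain toward $F_j$ would contradict the forward-closure established in part (a). The condition $F_j \cap F_k \not\subset F_i$ should follow from irredundancy of the chain: if $F_j \cap F_k \subseteq F_i$, the chain could be shortened by bypassing $F_k$, contradicting that it is the unique irredundant proper chain.

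The main obstacle I anticipate is pinning down exactly which facet plays the role of $F_k$ and controlling its index. The abstract existence of a penultimate facet with $|F_k \cap F_i| = |F_i|-1$ is immediate from the distance/intersection machinery, but simplicial trees can have several facets at distance one from $F_i$, and I must select one whose index lands in $[j,i-1]$ and whose intersection with $F_j$ escapes $F_i$. I would handle this by an induction on $\dist_\Delta(F_j,F_i)$ (equivalently on $|F_i|-|F_j\cap F_i|$): the base case $\dist = 1$ is direct with $F_k$ essentially adjacent to $F_i$, and the inductive step would peel off the penultimate facet of the irredundant chain, apply part (a) to confine its index below $i$, and recurse. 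The delicate bookkeeping is ensuring the non-containment $F_j \cap F_k \not\subset F_i$ survives the induction, for which irredundancy of the unique proper chain is the crucial input, since any containment would produce a shorter proper chain and violate uniqueness established in \cite[Proposition 1.17]{Zheng}.
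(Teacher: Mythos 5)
Your overall instinct (good leaf ordering, distance-one consecutive facets, and the intersection property as the engine) is the right toolkit, but in both parts the concrete mechanism you propose would fail. In part (a), you plan to derive the contradiction from the branch/good-leaf condition of $F_i$ in $\langle F_1,\ldots,F_i\rangle$, i.e.\ from containments of the form $H\cap F_i\subseteq G\cap F_i$. This cannot work: your hypothetical vertex satisfies $x\notin F_i$, so it is invisible to every intersection with $F_i$, and no condition phrased in terms of subsets of $F_i$ can be violated by it. The missing idea, which is how the paper argues, is a downward induction on $k$: assuming $x\notin F_{k-1}$ and $x\in F_k$, the good-leaf property of the \emph{later} facet $F_k$ in $\langle F_1,\ldots,F_k\rangle$ makes the intersections $F_l\cap F_k$ ($l<k$) totally ordered; since $\dist_{\Delta}(F_{k-1},F_k)=1$ forces $|F_{k-1}\cap F_k|=|F_k|-1$, which is the largest such intersection can be, we get $F_j\cap F_k\subseteq F_{k-1}\cap F_k$, hence $x\in F_{k-1}$, a contradiction. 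It is the good-leaf property of $F_k$, not of $F_i$, that propagates the vertex backwards one index at a time.

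In part (b), you work with the abstract unique irredundant proper chain from $F_j$ to $F_i$ and try to control its penultimate facet, and two steps fail. First, part (a) only rules out an index $\geq i$ for that facet; nothing in your argument prevents the penultimate facet from having index $<j$, so the claimed membership $k\in[j,i-1]$ is not established. The paper sidesteps this entirely: since $\dist_{\Delta}(F_t,F_{t+1})=1$ for all $t$, the sequence $F_j,F_{j+1},\ldots,F_i$ is already a proper chain, and pruning it to an irredundant one keeps every index inside $[j,i]$, so the penultimate facet automatically has index in $[j,i-1]$ (with the case $\dist_{\Delta}(F_j,F_i)=1$ handled by taking $k=j$). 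Second, your non-containment argument (``if $F_j\cap F_k\subseteq F_i$ the chain could be shortened by bypassing $F_k$'') does not go through: deleting $F_k$ leaves its predecessor adjacent to $F_i$, and properness of the shortened chain would require that predecessor to meet $F_i$ in codimension one, which the containment hypothesis gives you no way to verify. The correct mechanism is a cardinality count from the intersection property: $\dist_{\Delta}(F_j,F_k)=\dist_{\Delta}(F_j,F_i)-1$ gives $|F_j\cap F_k|=|F_j\cap F_i|+1$, whereas $F_j\cap F_k\subseteq F_i$ would force $F_j\cap F_k\subseteq F_j\cap F_i$, contradicting that count. So irredundancy enters only through the distance computation, not through any chain-shortening.
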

\begin{proof}
(a)   We use  induction on $k \ge i$. The base case $k=i$ is trivial. Assume that
$k > i$ and $x \notin F_{k-1}$. Suppose, on the contrary, that $x \in F_k$. Then $x \in F_j \cap  F_k$. Since $\dist_{\Delta}(F_{k-1},F_k) =1$, $|F_{k-1} \cap F_k|=|F_k|-1$. ALso, as $F_k$ is a good leaf of $\langle F_1,\ldots,F_{k-1}\rangle$, therefore $F_j \cap  F_k \subseteq F_{k-1} \cap F_k$. Therefore, $x \in F_{k-1}$, a contradiction. 
\par (b) For $j<i$, it is easy to note that $F_j,F_{j+1},\ldots,F_i$ is a proper chain between $F_j$ and $F_i$. Now, after removing suitable facets from this proper chain, we obtain an irredundant proper chain,  $F_j,F_{j_1},\ldots,F_{j_{l-1}},F_{j_l}=F_i$. If $\dist_{\Delta}(F_j,F_i)=1$, then take $k=j.$ Or else we have $
    \dist_{\Delta}(F_j,F_{j_{l-1}}) = \dist_{\Delta}(F_j,F_i)-1>0.$
Consequently, since $\Delta$ satisfy  intersection property, $|F_j \cap F_{j_{l-1}}|=|F_j \cap F_i|+1.$ 
Suppose now that $F_j \cap F_{j_{l-1}} \subseteq F_i$. Then, $F_j \cap F_{j_{l-1}} \subseteq F_i \cap F_j$ which is a contradiction to the fact that $|F_j \cap F_{j_{l-1}}|=|F_j \cap F_i|+1.$ Hence,  $F_j \cap F_{j_{l-1}} \not\subset F_i$.
\end{proof}

\begin{lemma}\label{main-lemma}
Let $\Delta$ be a simplicial tree on the vertex set $[n]$ such that $I(\Delta)$ has a linear resolution, equivalently \cite[Theorem 3.17]{Zheng} $\Delta$ satisfy the intersection property. Then, $I(\Delta)^s$ has linear quotients for all $s\ge 1.$
\end{lemma}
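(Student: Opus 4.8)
The goal is to prove that $I(\Delta)^s$ has linear quotients for all $s\ge 1$, given that $\Delta$ satisfies the intersection property. The strategy is to fix the good-leaf ordering $F_1,\ldots,F_r$ supplied by Lemma~\ref{lemmag}, which additionally enjoys the two structural properties (a) and (b) of Lemma~\ref{lemo}. Since $\Delta$ is pure (by \cite[Proposition 3.9]{Zheng}, as $I(\Delta)$ has a linear resolution), all generators $m_1,\ldots,m_r$ have the same degree $d+1$, where $d=\dim\Delta$. A natural set of generators for $I(\Delta)^s$ is the collection of products $m_{i_1}\cdots m_{i_s}$ with $i_1\le\cdots\le i_s$; I would order these products by first sorting on the multiset of indices lexicographically (or by the induced order on the corresponding monomials), so that each generator is encoded by a nondecreasing index sequence. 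The task then reduces to showing that for any generator $u=m_{i_1}\cdots m_{i_s}$ and any earlier generator $v=m_{j_1}\cdots m_{j_s}$ in this order, the colon $(v):u$ is generated by variables — concretely, that there is a variable $x$ and an earlier generator $w$ with $w:u=(x)$ and $x$ dividing $v/\gcd(u,v)$.

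First I would set up the combinatorial bookkeeping: given two generators $u$ and $v$ with index sequences differing, locate the smallest position where the index sequences disagree, say $u$ uses $F_i$ and $v$ uses $F_j$ with $j<i$. By property (a) of Lemma~\ref{lemo}, any vertex $x\in F_j\setminus F_i$ lies outside all $F_k$ with $k\ge i$; this is precisely what controls where such a variable can reappear and is the key to showing the colon variable behaves well. The plan is to use property (b) to produce an index $k\in[j,i-1]$ with $|F_k\cap F_i|=|F_i|-1$ and $F_j\cap F_k\not\subset F_i$: this gives a vertex $x\in (F_j\cap F_k)\setminus F_i$, and replacing the factor $m_i$ in $u$ by $m_k$ yields a generator $w$ that is strictly earlier in the order, with $w=x\cdot(u/y)$ for an appropriate vertex $y\in F_i\setminus F_k$. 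One then checks $w:u=(x)$ and $x\mid v$, so that the standard criterion for linear quotients (every colon of an earlier generator into $u$ contains a degree-one element) is satisfied.

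The hard part will be verifying the colon computation cleanly and, in particular, confirming that the chosen $x$ divides $v$ while $x\nmid u$, so that $x$ genuinely generates part of $(v):u$. This requires carefully tracking how the good-leaf structure forces the intersection patterns: because $F_i$ is a good leaf of $\langle F_1,\ldots,F_i\rangle$, intersections $F_j\cap F_i$ are linearly ordered, and property (a) guarantees that the "new" vertex $x$ introduced when passing from $F_i$ to $F_k$ does not get reintroduced by any later facet, so no cancellation spoils the colon. I expect the combinatorial case analysis — matching the position of disagreement in the index sequences with the facet-distance-one structure and ruling out degenerate overlaps — to be the most delicate point; the purity of $\Delta$ and the equidistance property $\dist_\Delta(F_i,F_{i+1})=1$ from Lemma~\ref{lemmag} should be exactly the inputs that keep the degrees balanced and force each successive colon to be generated in degree one.
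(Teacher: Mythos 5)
Your proposal follows essentially the same route as the paper's proof: fix the good-leaf ordering of Lemma~\ref{lemmag}, order the generators of $I(\Delta)^s$ lexicographically via their factorizations into the $m_i$, and for a later generator locate the first index of disagreement, then use Lemma~\ref{lemo}(b) to swap the factor $m_q$ for $m_k$ (producing the earlier witness $P$ with $(P):N=(x)$) and Lemma~\ref{lemo}(a) to see that $x$ divides the colon of any earlier generator, which is exactly the verification of the Herzog--Hibi criterion \cite[Lemma 8.2.3]{Herzog'sBook} carried out in the paper. The only caveat is that your condition ``$x\mid v$ while $x\nmid u$'' should really be the multiplicity comparison $\deg_x(v)>\deg_x(u)$ (since $x$ may occur in several facets), but this follows from property (a) together with the sign pattern of the exponent differences, just as you anticipate.
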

\begin{proof}
By Lemma \ref{lemmag}, there exists an ordering of facets of $\Delta$, say $F_1,\ldots,F_r$, such that
\begin{itemize}
    \item $F_1,\ldots,F_r$ is a good leaf ordering on the facets of $\Delta;$
    \item $\dist_{\Delta}(F_i,F_{i+1}) =1$ for all $1 \le i \le r-1.$
\end{itemize} We fix that $m_1>\cdots >m_r$, where $m_i=\prod\limits_{j\in F_i}x_j$ for $1 \leq i \leq r$. One could easily verify that for any $s \geq 1$, any minimal monomial generator $M$ of $I(\Delta)^s$ has the unique expression $M=m_1^{a_1}m_2^{a_2}\cdots  m_r^{a_r}$ with $\sum\limits_{i=1}^ra_i=s$.  Now, for any two minimal monomial generators $M=m_1^{b_1}m_2^{b_2}\cdots m_r^{b_r}, N=m_1^{a_1}m_2^{a_2}\cdots m_r^{a_r}$ of $I(\Delta)^s$,
 we say $M > N$ if and only if $(b_1,\ldots,b_r) >_{lex} (a_1,\ldots,a_r)$. In this way, we get a total order among the minimal monomial generators of $I(\Delta)^s.$
 We claim that with respect to this total order among the minimal monomial generators of $I(\Delta)^s$ has linear quotients. By \cite[Lemma 8.2.3]{Herzog'sBook}, it is sufficient to prove that for any  two minimal monomial generators $M>N$ of $I(\Delta)^s$, there exists a minimal monomial generator $P>N$ such that $(P):N$ is generated by a variable and $(M):N \subseteq (P):N.$

Let $M=m_1^{b_1}\cdots m_r^{b_r}$ and $N=m_1^{a_1}\cdots m_r^{a_r}$ be two minimal monomial generators of $I(\Delta)^s$ such that $M>N.$ Therefore,  $a_t,b_t \ge 0$ for all $1 \le t \le r$ and $\sum\limits_{t=1}^ra_t=\sum\limits_{t=1}^rb_t=s$. We set 
$p =\min\{ t \in [r] \;: b_t-a_t>0  \}$ and
$q =\min\{t \in [r]: b_t-a_t<0\}.
$ Since $(b_1,\ldots,b_r) >_{lex} (a_1,\ldots,a_r)$, we get $p<q.$ Observe that 
$ b_p > a_p$, $b_i \geq a_i$ for all $p <i<q,$ and $b_q<a_q.$

 By Lemma \ref{lemo} (b), there exists $k \in [p,q)$ such that $F_p \cap F_k \not\subset F_q$ and $|F_k \cap F_q|=|F_q|-1.$ Let $x \in (F_p \cap F_k) \setminus F_q.$ We define 
 $c=(c_1,\ldots,c_r)$ as follows:
$$c_t=
\begin{cases}
 a_t & ~\text{if}~ t \notin \{q,k\} \\
 a_{t}+1  & ~\text{if} ~ t=k \\ 
 a_{t}-1  & ~\text{if}  ~t=q.
 \end{cases}
$$
Clearly, $(c_1,\ldots,c_r)>_{lex} (a_1,\ldots,a_r),$ and $\sum\limits_{t=1}^rc_t=s.$ Take $P=m_1^{c_1}\cdots m_r^{c_r}$. It is easy to verify that $(P):N=(m_k):m_q=(x).$  Consider,  \begin{eqnarray*}
(M):N &=& (m_p^{b_p}\cdots m_r^{b_r}):m_p^{a_p} \cdots m_r^{a_r} \\
&=& m_p^{b_p-a_p}\cdots m_{q-1}^{b_{q-1}-a_{q-1}} \left(\frac{m_q^{b_q}\cdots m_r^{b_r}}{\gcd(m_q^{b_q}\cdots m_r^{b_r}, m_q^{a_q}\cdots m_r^{a_r})}  \right). 
\end{eqnarray*}
Since $x \in (F_p \cap F_k) \setminus F_q$ and $b_p>a_p$, using Lemma \ref{lemo} (a) we obtain $(M):N \subseteq (x)=(P):N.$ Hence, the assertion follows.
\end{proof}

We conclude this section by characterizing the linearity of the minimal free resolution of facet ideals of simplicial trees.

\begin{theorem}\label{linear}
Let $\Delta$ be a simplicial tree on the vertex set $[n].$ Then, the followings are equivalent:
\begin{enumerate}
\item $\Delta$ satisfies the  intersection property.
    \item $I(\Delta)$ has a linear resolution.
    \item $I(\Delta)^s$ has linear quotients for all $s \ge 1$ and $\Delta$ is pure.
    \item $I(\Delta)^s$ has linear quotients for some $s$ and $\Delta $ is pure.
    \item $I(\Delta)^s$ has  a linear resolution for some $s.$
    \item $I(\Delta)^s$ has linear first syzygies for some $s.$
\end{enumerate}
\end{theorem}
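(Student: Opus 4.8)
The plan is to prove the cyclic chain $(2)\Rightarrow(3)\Rightarrow(4)\Rightarrow(5)\Rightarrow(6)\Rightarrow(2)$, treating $(1)\Leftrightarrow(2)$ as \cite[Theorem 3.17]{Zheng}; this renders all six statements equivalent. Several links are already in hand. For $(2)\Rightarrow(3)$, Lemma \ref{main-lemma} gives that $I(\Delta)^s$ has linear quotients for every $s\ge1$, and \cite[Proposition 3.9]{Zheng} gives that a linear resolution of $I(\Delta)$ forces $\Delta$ to be pure. The implication $(3)\Rightarrow(4)$ is immediate. For $(4)\Rightarrow(5)$ I would use purity: every facet has exactly $d+1$ vertices, so $I(\Delta)^s$ is generated in the single degree $s(d+1)$, and an equigenerated monomial ideal with linear quotients has a linear resolution by \cite[Proposition 8.2.1]{Herzog'sBook}. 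Finally $(5)\Rightarrow(6)$ is trivial, since a linear resolution is in particular linear in homological degree one and already supplies the equigeneration demanded by the definition of a linear first syzygy.

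The whole weight of the theorem therefore rests on the remaining implication, for which, since $(1)\Leftrightarrow(2)$, it suffices to derive the intersection property (statement (1)) from hypothesis (6). Note first that the definition of a linear first syzygy presupposes that $I(\Delta)^s$ is equigenerated, which, as above, forces $\Delta$ to be pure; so only the intersection property remains. I would argue by contraposition. Assume $\Delta$ is pure of dimension $d$ but fails the intersection property; writing $m_F=\prod_{x\in F}x$ and $D=s(d+1)$ for the generating degree, I will produce, for every $s\ge1$, a minimal first syzygy of $I(\Delta)^s$ in degree strictly larger than $D+1$, contradicting (6). The engine is the following local computation: for two facets $G,H$ with $c:=|G\cap H|$, the consecutive Veronese-type generators $M=m_G^{\,s-i}m_H^{\,i}$ and $N=m_G^{\,s-i-1}m_H^{\,i+1}$ satisfy $\deg\mathrm{lcm}(M,N)=D+(d+1-c)$, and among all generators of $(m_G,m_H)^s$ only $M$ and $N$ themselves divide $\mathrm{lcm}(M,N)$, so within that sub-ideal the syzygy between $M$ and $N$ is minimal. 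If $c<d$ this degree exceeds $D+1$, and the syzygy is nonlinear.

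It remains to promote this to a minimal first syzygy of the full ideal, i.e. to show $\beta_{1,\mathrm{lcm}(M,N)}(I(\Delta)^s)\neq0$. By the lcm-lattice description of $\beta_{1,m}$ (Gasharov--Peeva--Welker), this amounts to checking that, among the minimal generators dividing $m:=\mathrm{lcm}(M,N)$, the vertices $M$ and $N$ lie in different components under the relation ``$\mathrm{lcm}$ properly divides $m$''. Every such generator is supported on $G\cup H$, hence is a product of facets contained in $G\cup H$, so the analysis reduces to controlling the facets $K\subseteq G\cup H$. Here I would take $G,H$ to be a violating pair of \emph{minimal distance} $\ell\ge2$: using that every shorter pair obeys $\dim(\,\cdot\cap\cdot)=d-\dist_{\Delta}(\cdot,\cdot)$, together with the good-leaf order and properties (a),(b) of Lemma \ref{lemo}, one shows that no facet $K\neq G,H$ contained in $G\cup H$ can mediate between $M$ and $N$ below degree $\deg m$ (for $\ell=2$ one checks directly that the middle facet of the chain pokes out of $G\cup H$, since otherwise $\dim(G\cap H)\ge d-2$, contradicting the violation). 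This disconnectedness is precisely the main obstacle: the difficulty is that in a power the ``obvious'' nonlinear relation is easily absorbed through a common factor, so one must pin down the exact set of generators supported on $G\cup H$ and exploit the acyclicity of the tree to keep $M$ and $N$ separated. The remaining case, where $\Delta$ is pure but not connected in codimension one, I would treat by the same Veronese construction applied to a pair of facets meeting in dimension at most $d-2$ whose union contains no further facet, whose existence follows from the leaf structure of $\Delta$.
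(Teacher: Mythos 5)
Your handling of $(2)\Rightarrow(3)\Rightarrow(4)\Rightarrow(5)\Rightarrow(6)$ coincides with the paper's proof (same citations: Lemma~\ref{main-lemma}, \cite[Proposition 3.9]{Zheng}, \cite[Proposition 8.2.1]{Herzog'sBook}), and your local computation is correct: the only generators of $(m_G,m_H)^s$ dividing $\mathrm{lcm}(M,N)$ are $M$ and $N$, and $\deg\mathrm{lcm}(M,N)=s(d+1)+(d+1-|G\cap H|)$, which exceeds $s(d+1)+1$ whenever $|G\cap H|\le d-1$. Your lcm-lattice formulation is also a legitimate, citation-free alternative to the paper's route for $(6)\Rightarrow(2)$, which instead passes to the induced subcomplex $\Delta'$ on $G\cup H$, invokes \cite[Theorem 2.1]{GVa05} for powers of the complete intersection $(v,w)$, and transfers the nonvanishing first Betti number to $I(\Delta)^s$ via \cite[Theorem 3.6]{BCDMS}. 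But both routes stand or fall on the same combinatorial claim: one must exhibit facets $G,H$ with $|G\cap H|\le d-1$ such that \emph{no other facet of $\Delta$ is contained in $G\cup H$}; otherwise generators of $I(\Delta)^s$ built from such a facet divide $\mathrm{lcm}(M,N)$ and can reconnect $M$ to $N$ in your graph.

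This claim is exactly where your proposal has a genuine gap. First, your argument for it invokes Lemma~\ref{lemo}(a),(b), but that lemma (like Lemma~\ref{lemmag}, whose ordering it refines) assumes that $\Delta$ satisfies the intersection property --- precisely the hypothesis you are negating in the contrapositive --- so it is unavailable, and appealing to it is circular. Second, choosing a violating pair of minimal distance does not by itself yield the claim: your check at $\ell=2$ only shows that the \emph{middle facet of the chosen irredundant proper chain} leaves $G\cup H$, whereas you must exclude \emph{every} facet $K\ne G,H$ from $G\cup H$ (any such $K$, chain member or not, produces mediating generators); for $\ell\ge 3$, and for the case where $\Delta$ is pure but not connected in codimension one, nothing is actually proved (``one shows \ldots'', ``follows from the leaf structure''). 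The paper's proof shows this step is the real content and obtains it by a different mechanism: it does not take a minimal violating pair at all, but uses \cite[Proposition 3.9(i)]{Zheng} to locate, along a good leaf order $F_1,\ldots,F_r$, the first index $k$ where linear quotients fails, then chooses $t$ maximal subject to three explicit conditions on the colon ideals $(m_{i_j}):m_{k+1}$, and proves $F_{i_j}\not\subseteq F_{i_t}\cup F_{k+1}$ for all $j\ne t$ by combining purity with the fact that the intersections $F_{i_j}\cap F_{k+1}$ form a chain under inclusion (because $F_{k+1}$ is a good leaf of $\langle F_1,\ldots,F_{k+1}\rangle$). Without an argument of this kind that manufactures the pair and proves the ``no facet in the union'' property from the mere failure of the intersection property, your contrapositive does not go through.
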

\begin{proof}
It follows from \cite[Proposition 3.17] {Zheng} that $(1)$ and $(2)$ are equivalent.  The implication $(2) \implies (3)$ follows from Lemma \ref{main-lemma} and the immplications $(3) \implies (4),$  and $(5) \implies (6)$ are immediate. Note that if $\Delta$ is pure, then for all $s \ge 1$, $I(\Delta)^s$ is generated in the same degree. Consequently, by \cite[Proposition 8.2.1]{Herzog'sBook}, the implication $(4) \implies (5)$ follows. Next, we prove $(6) \implies (2)$. Since $I(\Delta)^s$ has linear first syzygies, $I(\Delta)^s$ is generated in the same degree. Therefore, $\Delta$ is pure.  Suppose that $I(\Delta)$ has no linear resolution. By \cite[Proposition 3.9 (i)]{Zheng}, $I(\Delta)$ can not have linear quotients with respect to any order on minimal monomial generators. Let $F_1, \ldots, F_r$ be a good leaf order on the facets of $\Delta.$ Then, for some $k \le r-1$, $(m_1,\ldots, m_k):m_{k+1}$ is not generated by monomials of degree one. Since $F_{k+1}$ is a good leaf of the subcomplex with facets $F_1,\ldots, F_{k+1},$ $\left\{F_i \cap F_{k+1} ~ \mid ~ 1 \le i \le k  \right\}$ is a total order set, i.e., there exist distinct $i_1, \ldots, i_k \in \{ 1, \ldots, k\}$ such that $F_{i_1} \cap F_{k+1} \subseteq \cdots \subseteq F_{i_k} \cap F_{k+1}$. Let $t \in \{ 1, \ldots, k\}$ be the largest such that 
\begin{itemize}
    \item $(m_{i_t}) : m_{k+1}$ is not generated by a monomial of degree one;
    \item $(m_{i_j} ~\mid ~ j>t): m_{k+1}$ is generated by monomials of degree one;
    \item $(m_{i_t}) : m_{k+1} \not\subset (m_{i_j}) : m_{k+1} $ for each $j>t.$ 
\end{itemize}
 We claim that $F_{i_j} \not\subseteq F_{i_t} \cup F_{k+1}$ for each $j \in \{1, \ldots, k\} \setminus \{t\}.$ For each $j>t,$ there exists a variable $z_j \in F_{i_j} \setminus F_{k+1}$ such that $z_j \notin F_{i_t}$ as $(m_{i_t}) : m_{k+1} \not\subset (m_{i_j}) : m_{k+1} $. Consequently, $F_{i_j} \not\subseteq F_{i_t} \cup F_{k+1}$ for each $j>t.$ Suppose that for some $j<t$, $F_{i_j} \subseteq F_{i_t} \cup F_{k+1}.$
Since $F_{i_j} \cap F_{k+1} \subseteq F_{i_t} \cap F_{k+1} \subseteq F_{i_t}$, we get that $F_{i_j} \subseteq F_{i_t}$ which is not possible as $\Delta$  is pure and  $F_{i_j}, F_{i_t}$ are distinct facets. Thus, the claim follows. Let $\Delta'$ be the induced subcomplex of $\Delta$ on the vertex set $F_{i_t} \cup F_{k+1}.$  It follows from the above claim that $I(\Delta')=(m_{i_t}, m_{k+1}).$ Set $u=\prod\limits_{l \in F_{i_t}\cap F_{k+1}} x_l$, $v=\prod\limits_{l \in F_{i_t}\setminus F_{k+1}} x_l$ and $w=\prod\limits_{l \in F_{k+1}\setminus F_{i_t}} x_l$.  Observe that $v, w$ are monomials of degree at least two, as $(m_{i_t}) : m_{k+1}$ is not generated by a monomial of degree one. Since $v$ and $w$ are monomials in a disjoint set of variables, it follows from \cite[Theorem 2.1]{GVa05} that  $\beta_{1,j}^R\left((v,w)^s\right) \neq 0$, where $j=\deg\left((vw)^s\right).$  Consequently, $\beta_{1,j}^R\left(I(\Delta')^s\right)=\beta_{1,j}^R\left((uv,uw)^s\right) \neq 0$, where $j=\deg\left((uvw)^s\right).$ Since $\Delta'$ is an induced subcomplex of $\Delta$, it follows from \cite[Theorem 3.6]{BCDMS} that $\beta_{1,j}^R\left(I(\Delta)^s\right) \neq 0$ with $j=\deg\left((uvw)^s\right).$ This is a contradiction to the fact that $I(\Delta)^s$ has linear first syzygies for some $s.$ Thus, $(6) \implies (2)$ follows. Hence, the assertion follows.  
\end{proof}

\section{Regularity of $t$-path ideals of rooted trees }\label{section4}
In this section, we give an explicit formula for the regularity of $t$-path ideals of perfect rooted trees with $\left\lceil{\frac{\hgt(\Gamma)+1}{2}}\right\rceil \leq t \leq \hgt(\Gamma)+1$. We first recall the notion of rooted trees and $t$-path ideals associated with them.

A \textit{rooted tree} $(\Gamma,x_0)$ is a tree with a distinguished vertex $x_0$, called the \textit{root} of $\Gamma$. In this article, whenever we mention a rooted tree, we assume it is a directed tree in which edges are implicitly directed away from the root. A \textit{directed path of length $(t-1)$} is a sequence of distinct vertices $x_{i_1} ,\ldots, x_{i_t},$ such that $(x_{i_j}
, x_{i_{j+1}})$
is a directed edge from $x_{i_j}$ to $x_{i_{j+1}}$ for all $j = 1,\ldots,t-1$. The \textit{$t$-path ideal of $\Gamma,$} denoted $I_t(\Gamma),$ is the ideal
$$(x_{i_1} \cdots x_{i_t} : ~x_{i_1} ,\ldots, x_{i_t}~  \text{is~ a~ directed~ path~
on }~ t~ \text{ vertices in}~ \Gamma).$$

\begin{definition}
Let $(\Gamma,x_0)$ be a rooted tree on the vertex set $\{ x_0,..., x_n\}$. 
\begin{enumerate}[a)]
\item For vertices $x,y$, the distance between $x$ and
$y$, denoted by $d_{\Gamma}(x,y),$ is the length of the unique directed path from $x$ to $y$ in $\Gamma$. If there is no directed path from  $x$ to $y$, we set $d_{\Gamma}(x,y)= \infty.$
\item The outdegree of a vertex $x$ in $\Gamma$, denoted $\deg_{\Gamma}^{+}(x),$ is the number of edges directed away from $x$.
\item If  $\deg_{\Gamma}^{+}(x)=0$, then $x$ is called a leaf of $\Gamma$. 
\item The parent of a vertex $x$ in $\Gamma$ is the vertex that is immediate before  $x$ on the unique directed path from $x_0$ to $x$.
\item A vertex $u$ is called a  descendant of $x$ if there is a directed path from $x$ to $u$ of length at least one. 
\item The level of a vertex $x,$ denoted $\ell_{\Gamma}(x),$ is $d_{\Gamma}(x_0,x)$. The height of $\Gamma$, denoted by $\hgt(\Gamma)$, is defined to be $\max_x\ell_{\Gamma}(x).$
\item If  $\deg_{\Gamma}^{+}(x)=k$ for all vertices $x$ with $\ell_{\Gamma}(x) \leq \hgt(\Gamma)-1$, then $(\Gamma,x_0)$ is called a $k$-nary rooted tree.
\item A rooted forest is a disjoint union of rooted trees. Further, the $t$-path ideal of a rooted forest is the sum of the $t$-path ideals of rooted trees.
\item  The level of $x$ in a rooted forest $T$ is defined to be the level of $x$ inside the rooted tree containing $x$. The height of $T$ is the largest height among all the rooted trees of $T$.
\item An induced subtree of $\Gamma$ is a directed tree which is an induced subgraph of of $\Gamma$.
\item  For a vertex $z$ of $\Gamma$,  the induced rooted  subtree of $\Gamma$ with root $z$ is a rooted tree on the vertex set $\{z\} \cup\{x : x$ is a descendant of $z\}.$
\item For any induced subtree $\Gamma'$ of $\Gamma$, by $\Gamma \setminus \Gamma'$  we denote the induced subforest of $\Gamma$ obtained by removing the vertices of $\Gamma'$ and the edges incident to these vertices.
\item A rooted tree $(\Gamma,x_0)$ is called a {\it perfect rooted tree} if $\ell_{\Gamma}(x) = \hgt(\Gamma)$ for all leaf $x$ in $\Gamma.$ 
\end{enumerate}


\end{definition}

Authors in \cite{RT} gave an explicit formula for the regularity of $I_{\hgt(\Gamma)+1}(\Gamma)$. In the following theorem, we obtain a different formula for the regularity of $I_{\hgt(\Gamma)+1}(\Gamma)$ in terms of the outdegree of vertices which will be useful in proving various other important results in this paper.
\begin{theorem}\label{th1}
Let $(\Gamma,x_0)$ be a perfect rooted tree with $\hgt{(\Gamma)}\geq 1,$ and  $t=\hgt({\Gamma})+1$. Then,
$$\displaystyle\reg\left(\frac{R}{I_t(\Gamma)}\right)= 1 + \sum_{\ell_{\Gamma}(x) = 0}^{\hgt({\Gamma})-2}\deg_{\Gamma}^+(x).$$
\end{theorem}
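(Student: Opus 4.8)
The plan is to induct on $h := \hgt(\Gamma)$, exploiting the fact that when $t = h+1$ every directed $t$-path must run from the root $x_0$ to a leaf (there is no room for a path of length $h$ that avoids level $0$), so every minimal generator of $I_t(\Gamma)$ is divisible by $x_0$. Let $\Gamma_1,\dots,\Gamma_k$ be the induced rooted subtrees of $\Gamma$ whose roots are the children of $x_0$, so $k=\deg^{+}_{\Gamma}(x_0)$; since $\Gamma$ is perfect, each $\Gamma_i$ is again a perfect rooted tree with $\hgt(\Gamma_i)=h-1$, and in particular no child of $x_0$ is a leaf once $h\ge 2$. I would first record the decomposition
$$ I_t(\Gamma) \;=\; x_0\cdot\bigl(I_{t-1}(\Gamma_1)+\cdots+I_{t-1}(\Gamma_k)\bigr), $$
observing that $t-1=\hgt(\Gamma_i)+1$, so each summand is precisely the top $t$-path ideal of a subtree, to which the inductive hypothesis will apply. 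Writing $K:=I_{t-1}(\Gamma_1)+\cdots+I_{t-1}(\Gamma_k)$, the two features that make the argument go through are that $x_0$ divides no minimal generator of $K$ and that the $\Gamma_i$ are supported on pairwise disjoint variable sets $V_i$.

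The proof then rests on two classical reductions. First, because $x_0$ is a variable dividing no generator of $K$, multiplication by $x_0$ is a graded isomorphism $K(-1)\cong x_0K=I_t(\Gamma)$, so $\beta_{i,j}^R(I_t(\Gamma))=\beta_{i,j-1}^R(K)$ and hence $\reg(R/I_t(\Gamma))=\reg(R/K)+1$. Second, since the ideals $I_{t-1}(\Gamma_i)$ live on disjoint variable sets, $R/K$ is, up to tensoring with the polynomial ring $\sk[x_0]$ (which leaves regularity unchanged), the tensor product over $\sk$ of the rings $R_i/I_{t-1}(\Gamma_i)$, where $R_i=\sk[V_i]$; the Künneth formula for $\Tor$ gives the additivity $\reg(R/K)=\sum_{i=1}^{k}\reg\bigl(R_i/I_{t-1}(\Gamma_i)\bigr)$. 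Combining the two yields the recursion $\reg(R/I_t(\Gamma))=1+\sum_{i=1}^{k}\reg\bigl(R_i/I_{t-1}(\Gamma_i)\bigr)$.

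It remains to run the induction. The base case $h=1$ is a star, whose edge ideal $I_2(\Gamma)$ has a linear resolution with $\reg(R/I_2(\Gamma))=1$, agreeing with the empty-sum value of the claimed formula. For $h\ge 2$ each $\Gamma_i$ has height $h-1\ge 1$, so the inductive hypothesis gives $\reg(R_i/I_{t-1}(\Gamma_i))=1+\sum_{\ell_{\Gamma_i}(x)=0}^{h-3}\deg^{+}_{\Gamma_i}(x)$. Using $\ell_{\Gamma_i}(x)=\ell_{\Gamma}(x)-1$ and $\deg^{+}_{\Gamma_i}(x)=\deg^{+}_{\Gamma}(x)$ for $x\in\Gamma_i$, the inner sums reindex to the vertices of $\Gamma$ at levels $1$ through $h-2$, while the $k=\deg^{+}_{\Gamma}(x_0)$ copies of the leading $1$ contribute exactly the level-$0$ term; assembling everything reproduces $1+\sum_{\ell_{\Gamma}(x)=0}^{h-2}\deg^{+}_{\Gamma}(x)$, as required.

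The reindexing and the base case are routine; the genuinely structural inputs are the two reductions, both standard (the degree shift for a variable separated from the ideal, and additivity of regularity across disjoint variable supports). The one point deserving careful verification is the decomposition of $I_t(\Gamma)$ itself: one must use perfectness to guarantee both that no directed $t$-path avoids the root and that every child-subtree has height exactly $h-1$, so that the displayed equality holds with no stray or missing generators. Once that is established the remainder is a clean induction.
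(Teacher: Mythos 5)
Your proof is correct, and it shares the paper's overall skeleton: induction on $\hgt(\Gamma)$, reduction to the child subtrees $\Gamma_1,\ldots,\Gamma_k$ of the root, additivity of regularity over disjoint variable sets, and the same recursion $\reg(R/I_t(\Gamma))=1+\sum_{i=1}^k\reg\bigl(R/I_{t-1}(\Gamma_i)\bigr)$. Where you genuinely diverge is in how the degree-one shift is justified. The paper works with the colon ideal: it uses the short exact sequence $0\to \bigl(R/(I_t(\Gamma):x_0)\bigr)(-1)\to R/I_t(\Gamma)\to R/(I_t(\Gamma)+(x_0))\to 0$, observes that $I_t(\Gamma)+(x_0)=(x_0)$ because $x_0$ divides every generator, checks $\reg(R/(I_t(\Gamma)+(x_0)))=0<t-1\le\reg(R/I_t(\Gamma))$, and then invokes a regularity lemma for exact sequences (\cite[Lemma 2.10]{HHJ13}) to conclude $\reg(R/I_t(\Gamma))=1+\reg(R/(I_t(\Gamma):x_0))$. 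You instead factor $I_t(\Gamma)=x_0\cdot K$ with $K=\sum_i I_{t-1}(\Gamma_i)$ and use the graded isomorphism $K(-1)\cong x_0K$; this is self-contained and sidesteps both the external lemma and the a priori lower bound $\reg(R/I_t(\Gamma))\ge t-1$ (note the isomorphism needs only that $R$ is a domain, not that $x_0$ avoids the generators of $K$ --- that hypothesis is what the K\"unneth/additivity step uses). What the paper's heavier machinery buys becomes visible in the companion result Theorem~\ref{thm2} for $t=\hgt(\Gamma)$: there $x_0$ no longer divides every generator, no such factorization exists, and the colon-ideal exact sequence argument is genuinely needed. Conversely, your factorization is exactly the identity the paper itself reuses later (in the proof of Theorem~\ref{perfectr}, where it writes $I_t(\Gamma)=(x_0)I_{t-1}(\Gamma\setminus\{x_0\})$), so your route is fully compatible with, and slightly more elementary than, the paper's for this particular statement.
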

\begin{proof}
 We use induction on $\hgt(\Gamma)$. Suppose that $\hgt(\Gamma)=1$. Then, $$I_t(\Gamma) =(x_0y\; :\;  \ell_{\Gamma}(y)=1)=x_0(y \; : \;  \ell_{\Gamma}(y)=1).$$ Thus, $\displaystyle\reg\left(\frac{R}{I_t(\Gamma)}\right)= 1+ \reg\left( \frac{R}{(y \; : \; \ell_{\Gamma}(y)=1)}\right)= 1$, and hence, the result is true for  $\hgt(\Gamma)=1$. Assume now that  $\hgt(\Gamma)>1$. Consider the following short exact sequence:
$$ \displaystyle 0 \rightarrow \frac{R}{I_t(\Gamma):x_0} (-1)\xrightarrow{\cdot x_0} \frac{R}{I_t(\Gamma)} \rightarrow \frac{R}{I_t(\Gamma)+(x_0)}  \rightarrow 0.$$ Since $t=\hgt(\Gamma)+1,$ $x_0 $ divides each monomial of $I_t(\Gamma)$, and thus, $I_t(\Gamma)+(x_0)=(x_0).$ Consequently, we have $\displaystyle\reg\left(\frac{R}{I_t(\Gamma)+(x_0)}\right)=0<t-1 \leq \reg\left(\frac{R}{I_t(\Gamma)}\right).$ Therefore, by \cite[Lemma 2.10]{HHJ13}, $\displaystyle \reg\left(\frac{R}{I_t(\Gamma)}\right)=\reg\left(\frac{R}{I_t(\Gamma):x_0}(-1)\right)=1+\reg\left(\frac{R}{I_t(\Gamma):x_0}\right).$  Observe that $I_t(\Gamma):x_0=I_{t-1}(\Gamma \setminus \{x_0\})$. Let  $x_{i_1},\ldots,x_{i_k}$ be descents of $x_{0}$ in $\Gamma$. Then $\Gamma \setminus \{x_0\}$ is the disjoint union of perfect rooted trees, say, $(\Gamma_1,x_{i_1}),\ldots, (\Gamma_k,x_{i_k})$ each of height $\hgt(\Gamma)-1.$ Thus, $\displaystyle \reg\left(\frac{R}{I_t(\Gamma)}\right)=1+\reg\left(\frac{R}{I_t(\Gamma):x_0}\right)=1+\sum\limits_{j=1}^k\reg\left(\frac{R}{I_{t-1}(\Gamma_j)}\right).$ Since for each $j$, $(\Gamma_j,x_{i_j})$ is a perfect rooted tree with $t-1=\hgt(\Gamma)=\hgt(\Gamma_j)+1$, by induction, $$\reg\left(\frac{R}{I_{t-1}(\Gamma_j)}\right) = 1+ \sum_{\ell_{\Gamma_j}(x)=0}^{\hgt(\Gamma_j)-2} \deg_{\Gamma_j}^+(x).$$  Therefore,
\begin{eqnarray*}
\reg\left(\frac{R}{I_t(\Gamma)}\right)&=& 1+\sum\limits_{j=1}^k\reg\left(\frac{R}{I_{t-1}(\Gamma_j)}\right)= 1+\sum_{j=1}^k \left( 1+ \sum\limits_{\ell_{\Gamma_j}(x)=0}^{\hgt(\Gamma_j)-2} \deg_{\Gamma_j}^{+}(x) \right)\\&=& 1+k+\sum\limits_{j=1}^k\sum\limits_{\ell_{\Gamma_j}(x) = 0}^{\hgt({\Gamma_j})-2}\deg_{\Gamma_j}^+(x). 
\end{eqnarray*} Note that for $x \in V(\Gamma \setminus \{x_0\})$ and for each $j$, $\ell_{\Gamma_j}(x) =\ell_{\Gamma}(x)-1$ and $\hgt(\Gamma_j) =\hgt(\Gamma)-1.$ Thus, 
\begin{eqnarray*}
\reg\left(\frac{R}{I_t(\Gamma)}\right)=  1+k+\sum\limits_{\ell_{\Gamma}(x)=1}^{\hgt(\Gamma)-2} \deg_{\Gamma}^{+}(x) = 1+\sum\limits_{\ell_{\Gamma}(x)=0}^{\hgt(\Gamma)-2} \deg_{\Gamma}^{+}(x) 
\end{eqnarray*} as $\deg_{\Gamma}^+(x_0)=k.$ Hence, the assertion follows.
\end{proof}

 \begin{theorem} \label{thm2}
Let $(\Gamma,x_0)$ be a perfect rooted tree with $\hgt(\Gamma) \geq 1$,  and $t=\hgt(\Gamma).$ Then $$\displaystyle\reg\left(\frac{R}{I_t(\Gamma)}\right)=   \sum_{\ell_{\Gamma}(x) = 0}^{\hgt({\Gamma})-2}\deg_{\Gamma}^+(x).$$
 \end{theorem}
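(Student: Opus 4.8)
The plan is to feed the same short exact sequence used for Theorem \ref{th1} into the standard regularity bounds, but now with the roles of the two outer terms reversed, so that the quotient term (not the colon term) realizes the regularity. First, the case $\hgt(\Gamma)=1$ is trivial: then $t=1$ and $I_1(\Gamma)$ is the irrelevant maximal ideal, so $\reg(R/I_1(\Gamma))=0$, matching the empty sum. So assume $h:=\hgt(\Gamma)\ge 2$ and consider
$$0\rightarrow \frac{R}{I_t(\Gamma):x_0}(-1)\xrightarrow{\;\cdot x_0\;}\frac{R}{I_t(\Gamma)}\rightarrow \frac{R}{I_t(\Gamma)+(x_0)}\rightarrow 0.$$
Denote the three terms by $A,B,C$. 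The standard bounds give $\reg(B)\le\max\{\reg(A),\reg(C)\}$ and $\reg(C)\le\max\{\reg(A)-1,\reg(B)\}$, so it suffices to prove the single inequality $\reg(A)\le\reg(C)$: both bounds then collapse to $\reg(B)=\reg(C)$. Since $\reg(A)=1+\reg(R/(I_t(\Gamma):x_0))$, everything reduces to computing $\reg(C)$ and $\reg(R/(I_t(\Gamma):x_0))$.

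For $C$, let $x_{i_1},\dots,x_{i_k}$ be the children of $x_0$, rooting induced subtrees $\Gamma_1,\dots,\Gamma_k$, each a perfect rooted tree of height $h-1$. A generator of $I_t(\Gamma)$ is a path on $t=h$ vertices, hence occupies levels $0,\dots,h-1$ (and is divisible by $x_0$) or levels $1,\dots,h$ (and is a root-to-leaf path of some $\Gamma_j$). Reducing modulo $x_0$ kills the first kind and keeps the second, so $I_t(\Gamma)+(x_0)=(x_0)+\sum_j I_h(\Gamma_j)$ with $I_h(\Gamma_j)=I_{\hgt(\Gamma_j)+1}(\Gamma_j)$. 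As $x_0$ is absent from the right-hand sum and the $\Gamma_j$ use disjoint variables, additivity of regularity over disjoint variable sets and Theorem \ref{th1} yield $\reg(C)=\sum_j\reg(R/I_h(\Gamma_j))=\sum_{\ell_{\Gamma}(x)=0}^{h-2}\deg_{\Gamma}^{+}(x)$, exactly the asserted value.

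For the colon term I would first prune $I_t(\Gamma):x_0$. Colon by the variable $x_0$ replaces each level-$0,\dots,h-1$ generator by its quotient by $x_0$, a path on $h-1$ vertices anchored at a child of $x_0$, and leaves each level-$1,\dots,h$ generator untouched; but the latter is a multiple of its own length-$(h-1)$ prefix, which is one of the former generators, hence redundant. Thus $I_t(\Gamma):x_0=\sum_j P(\Gamma_j)$, where $P(\Gamma')$ denotes the ideal of root-anchored paths on $\hgt(\Gamma')$ vertices in a perfect rooted tree $\Gamma'$. Factoring the root variable $x_r$ out of $P(\Gamma')$ writes it as $x_r\cdot\big(\sum_l P(\Gamma'_l)\big)$, with $x_r$ absent from the inner sum and the child subtrees $\Gamma'_l$ in disjoint variables; this gives the recursion $\reg(R/P(\Gamma'))=1+\sum_l\reg(R/P(\Gamma'_l))$, with base value $0$ in height one, whose solution is $\reg(R/P(\Gamma'))=\#\{x:\ell_{\Gamma'}(x)\le\hgt(\Gamma')-2\}$. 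Summing over $j$ and shifting levels back to $\Gamma$ gives $\reg(R/(I_t(\Gamma):x_0))=\#\{x\in\Gamma:1\le\ell_{\Gamma}(x)\le h-2\}$.

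Finally I would compare. Since $\sum_{\ell_{\Gamma}(x)=0}^{h-2}\deg_{\Gamma}^{+}(x)$ counts precisely the vertices at levels $1,\dots,h-1$, we obtain $\reg(C)-\reg(A)=\#\{x:\ell_{\Gamma}(x)=h-1\}-1$, which is nonnegative because $h\ge 2$ forces a vertex at level $h$ and hence a parent at level $h-1$. Thus $\reg(A)\le\reg(C)$ and $\reg(B)=\reg(C)$, completing the proof. The hard part will be the colon step: correctly discarding the redundant level-$1,\dots,h$ generators and then computing $\reg(R/(I_t(\Gamma):x_0))$ sharply (not just up to an additive constant), so that the comparison $\reg(A)\le\reg(C)$ works out exactly, with the one unit of slack furnished by the nonempty level $h-1$.
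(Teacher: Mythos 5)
Your proof is correct and takes essentially the same route as the paper: the same short exact sequence in $x_0$, the same identifications of $I_t(\Gamma)+(x_0)$ and $I_t(\Gamma):x_0$ (your root-anchored path ideal $P(\Gamma_j)$ is exactly the paper's $I_{t-1}\bigl(\Gamma_j\setminus\{z:\ell_{\Gamma_j}(z)=\hgt(\Gamma_j)\}\bigr)$), and the same final comparison $\reg\bigl(R/(I_t(\Gamma):x_0)\bigr)+1\le\reg\bigl(R/(I_t(\Gamma)+(x_0))\bigr)$ forcing the regularity to equal that of the quotient term. The only cosmetic differences are that the paper evaluates the colon term by applying Theorem \ref{th1} to the pruned forest rather than by your self-contained recursion, and concludes by citing \cite[Theorem 4.7(iii)]{GHJMNN19} rather than by the elementary exact-sequence bounds you invoke by hand.
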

 \begin{proof}  Consider the following short exact sequence:
$$ \displaystyle 0 \rightarrow \frac{R}{I_t(\Gamma):x_0} (-1)\xrightarrow{\cdot x_0} \frac{R}{I_t(\Gamma)} \rightarrow \frac{R}{I_t(\Gamma)+(x_0)} \rightarrow 0.$$ Note that $I_t(\Gamma)+(x_0)=I_t(\Gamma \setminus \{ x_0\})+(x_0).$ Let $x_{i_1},\ldots,x_{i_k}$ are descents of $x_{0}$ in $\Gamma.$ Then $\Gamma \setminus \{x_0\}$ is the disjoint union of perfect rooted trees, say, $(\Gamma_1,x_{i_1}),\ldots, (\Gamma_k,x_{i_k})$ each of height $\hgt(\Gamma)-1.$ Since for each $j$, $(\Gamma_j,x_{i_j})$ is a perfect rooted tree and $t =\hgt(\Gamma)=\hgt(\Gamma_j)+1$, by Theorem \ref{th1},  $\reg\left(\frac{R}{I_t(\Gamma_j)}\right)=  1+\sum\limits_{\ell_{\Gamma_j}(x) = 0}^{\hgt({\Gamma_j})-2}\deg_{\Gamma_j}^+(x)$. Thus,
\vspace{-0.2 cm}
\begin{eqnarray*}
\reg\left(  \frac{R}{I_t(\Gamma)+(x_0)}  \right)
&=&  \sum\limits_{j=1}^k\reg\left(\frac{R}{I_{t}(\Gamma_j)}\right) = \sum\limits_{j=1}^{k}\left( 1+\sum\limits_{\ell_{\Gamma_j}(x) = 0}^{\hgt({\Gamma_j})-2}\deg_{\Gamma_j}^+(x) \right)\\
&=& k+\sum\limits_{j=1}^k\sum\limits_{\ell_{\Gamma_j}(x) = 0}^{\hgt({\Gamma_j})-2}\deg_{\Gamma_j}^+(x) = \sum_{\ell_{\Gamma}(x) = 0}^{\hgt({\Gamma})-2}\deg_{\Gamma}^+(x).
\end{eqnarray*}
Next, we claim that  $I_t(\Gamma):x_0=I_{t-1}(\Gamma \setminus \{x_0,z:\ell_{\Gamma}(z)=\hgt(\Gamma)\}).$ Note that $$I_t(\Gamma)= I_t(\Gamma \setminus \{ z \; : \; \ell_{\Gamma}(z) =\hgt(\Gamma)\}) + I_t(\Gamma \setminus \{x_0\}).$$ Therefore, \begin{align*}
    I_t(\Gamma):x_0 &=I_t(\Gamma \setminus \{ z \; : \; \ell_{\Gamma}(z) =\hgt(\Gamma)\}):x_0 + I_t(\Gamma \setminus \{x_0\}) :x_0\\& = I_{t-1}(\Gamma \setminus \{x_0, \;  z \; : \; \ell_{\Gamma}(z) =\hgt(\Gamma)\}) + I_t(\Gamma \setminus \{x_0\}) \\&= I_{t-1}(\Gamma \setminus \{ x_0, \; z \; : \; \ell_{\Gamma}(z) =\hgt(\Gamma)\}), 
\end{align*} where the last equality follows from the fact that any path of length $t$ in $\Gamma \setminus \{x_0\}$ contains a path of length $t-1$ in $\Gamma \setminus \{ x_0, \; z \; : \; \ell_{\Gamma}(z) =\hgt(\Gamma)\}$.  For  $ 1 \leq j \leq k$, set $\Gamma'_j=\Gamma_j \setminus \{z:\ell_{\Gamma_j}(z)=\hgt(\Gamma_j)\}.$ Then,  $\Gamma \setminus \{ x_0, \; z \; : \; \ell_{\Gamma}(z) =\hgt(\Gamma)\}$ is the disjoint union of perfect rooted trees $(\Gamma'_1,x_{i_1}),\ldots, (\Gamma'_k,x_{i_k})$ and $\hgt(\Gamma_j')=\hgt(\Gamma_j)-1=\hgt(\Gamma)-2$ for $1 \leq j \leq k$. 
Therefore, by Theorem \ref{th1}, 
\begin{eqnarray*}
\reg\left( \frac{R}{I_t(\Gamma):(x_0)}  \right)
&=& \sum\limits_{j=1}^k\reg\left(\frac{R}{I_{t-1}(\Gamma'_j)}\right) = \sum\limits_{j=1}^{k}\left( 1+\sum\limits_{\ell_{\Gamma_j'}(x) = 0}^{\hgt({\Gamma_j'})-2}\deg_{\Gamma_j'}^+(x) \right)\\
&=& k+\sum\limits_{j=1}^k\sum\limits_{\ell_{\Gamma_j'}(x) = 0}^{\hgt({\Gamma_j'})-2}\deg_{\Gamma_j'}^+(x) = \sum_{\ell_{\Gamma}(x) = 0}^{\hgt({\Gamma})-3}\deg_{\Gamma}^+(x)\\&<& \sum_{\ell_{\Gamma}(x) = 0}^{\hgt({\Gamma})-2}\deg_{\Gamma}^+(x)= \reg\left(\frac{R}{I_t(\Gamma) +(x_0)}\right).
\end{eqnarray*}
Thus, the desired result follows from \cite[Theorem 4.7(iii)]{GHJMNN19}.
 \end{proof}

As a consequence, we obtain the following corollary.
 \begin{corollary}
 Let $(\Gamma, x_0)$ be a $k$-nary rooted tree with $\hgt(\Gamma)\geq 1$.  
 \begin{enumerate}
     \item If $t=\hgt(\Gamma)+1,$ then $$\displaystyle\reg\left(\frac{R}{I_t(\Gamma)}\right)=\frac{k^{\hgt(\Gamma)}-1}{k-1}.$$
 \item If $t=\hgt(\Gamma),$ then $$\displaystyle\reg\left(\frac{R}{I_t(\Gamma)}\right)=\frac{k^{\hgt(\Gamma)}-k}{k-1}.$$
 \end{enumerate} \end{corollary}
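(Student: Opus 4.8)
The plan is to derive the corollary directly from Theorem \ref{th1} and Theorem \ref{thm2} after recording two elementary structural facts about a $k$-nary rooted tree. First I would observe that a $k$-nary rooted tree is automatically a \emph{perfect} rooted tree: every vertex $x$ with $\ell_{\Gamma}(x)\le \hgt(\Gamma)-1$ has outdegree $k\ge 1$ and is therefore not a leaf, so every leaf must sit at level $\hgt(\Gamma)$, which is exactly the defining condition of a perfect rooted tree. This is what guarantees that both Theorem \ref{th1} and Theorem \ref{thm2} apply.

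The second fact is a level count. Since the root lies at level $0$ and each vertex of level at most $\hgt(\Gamma)-1$ has exactly $k$ children, an easy induction on $j$ shows that the number of vertices $x$ with $\ell_{\Gamma}(x)=j$ equals $k^{j}$ for every $0\le j\le \hgt(\Gamma)$. Consequently, for each $0\le j\le \hgt(\Gamma)-1$, the total contribution of level $j$ to the outdegree sum is $k^{j}\cdot k=k^{j+1}$, because every such vertex has outdegree exactly $k$.

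It then remains to evaluate the sums appearing in the two theorems. Summing over levels $0$ through $\hgt(\Gamma)-2$ gives a finite geometric series,
$$\sum_{\ell_{\Gamma}(x)=0}^{\hgt(\Gamma)-2}\deg_{\Gamma}^{+}(x)=\sum_{j=0}^{\hgt(\Gamma)-2}k^{j+1}=k\cdot\frac{k^{\hgt(\Gamma)-1}-1}{k-1}=\frac{k^{\hgt(\Gamma)}-k}{k-1},$$
understood as $k+k^{2}+\cdots+k^{\hgt(\Gamma)-1}$ (and as $\hgt(\Gamma)-1$ in the degenerate reading $k=1$). Part (2) is then immediate from Theorem \ref{thm2}. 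For part (1), Theorem \ref{th1} adds $1$ to this quantity, and combining fractions yields
$$1+\frac{k^{\hgt(\Gamma)}-k}{k-1}=\frac{(k-1)+k^{\hgt(\Gamma)}-k}{k-1}=\frac{k^{\hgt(\Gamma)}-1}{k-1},$$
as claimed.

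There is no genuine obstacle here, since the substance of the result is already packaged in Theorems \ref{th1} and \ref{thm2}. The only two points deserving a moment of care are verifying that the $k$-nary hypothesis forces perfectness (so that the cited theorems are available) and evaluating the geometric sum, including the convention for the degenerate case $k=1$, where the closed form is read as its limiting value $\hgt(\Gamma)$.
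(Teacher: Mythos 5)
Your proposal is correct and matches the paper's intended argument: the corollary is stated there as an immediate consequence of Theorems \ref{th1} and \ref{thm2}, obtained exactly as you do by noting that a $k$-nary rooted tree is perfect, counting $k^j$ vertices at level $j$, and summing the resulting geometric series. Your explicit verification of perfectness and your remark on the degenerate case $k=1$ are careful additions, but the route is the same.
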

  
In the following, we set the notations which we use throughout this article:
 \begin{notation}\label{nota}
 Let $(\Gamma,x_0)$ be a rooted tree with $\hgt(\Gamma) \geq t-1$, and  $z$ be a leaf such that  $\ell_{\Gamma}(z)=\hgt(\Gamma)$. Then, there exists a  unique path of length $t-1$ in $\Gamma$ that terminates at $z=x_t(z),$ say, $P(z):= x_1(z),\ldots,x_t(z)$. Let $x_{0}(z)$ be the parent of $x_{1}(z),$ if exists. 
For $j = 0,\ldots,t,$ let $\Gamma_j(z)$ be the induced  rooted subtree of $\Gamma$ rooted at $x_{j}(z),$ and let $\Delta^{\Gamma}_j(z) = \Gamma_j(z) \setminus (\Gamma_{j+1}(z) \cup  \{x_{j}(z)\}) $. Observe that if $\Gamma$ is perfect, then for each $ 0 \leq j \leq t-1$ $\Delta_{j}^{\Gamma}(z)$ is disjoint union of perfect rooted trees of height $t-j-1.$ Set
$ \Gamma(z) =
    \begin{cases}
      \Gamma \setminus \Gamma_0(z) & \text{if $x_0(z)$  exists}\\
     \Gamma \setminus \Gamma_1(z)  & \text{if $x_0(z)$  does not exist}
    \end{cases}.       
$ Note that in both cases $\Gamma(z)$ is either empty or a rooted tree. 
 \end{notation}

In the following, we collect a few results from \cite{BRT}, which will be useful in proving the main theorem of this section.
\begin{lemma}\label{lemma}
Let $(\Gamma,x_0)$ be a rooted tree with  $\hgt(\Gamma) \geq t-1$, and  $z$ be a leaf of $\Gamma$ such that  $\ell_{\Gamma}(z)=\hgt(\Gamma)$. 
Then, with Notation \ref{nota},\begin{enumerate}
\item \cite[Lemma 2.8]{BRT} \begin{align*}
    I_t(\Gamma \setminus \{z\}) : (x_1(z) \cdots x_t(z)) = I_t(\Gamma(z))+ (x_0(z))+\sum\limits_{j=0}^{t-1} I_{t-j}(\Delta_j^{\Gamma}(z)).
\end{align*}
    \item \cite[Corollary 3.3]{BRT} \begin{align*}
    \reg\left(\frac{R}{I_t(\Gamma)}  \right)=& \max \left\{\reg\left(\frac{R}{I_t(\Gamma \setminus \{z\})}\right),\reg\left(\frac{R}{I_t(\Gamma(z))}\right) + \sum\limits_{j=0}^{t-1}\reg\left(\frac{R}{I_{t-j}(\Delta^{\Gamma}_j(z) )}\right)+(t-1)\right\}. 
\end{align*}
\end{enumerate}
\end{lemma}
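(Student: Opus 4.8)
Both parts are recorded in \cite{BRT} and may simply be cited as \cite[Lemma 2.8]{BRT} and \cite[Corollary 3.3]{BRT}; I nonetheless sketch how I would recover them, since part (2) is deduced from part (1) via a short exact sequence together with the standard regularity estimates.

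For part (1), set $w=x_1(z)\cdots x_t(z)$ and $J=I_t(\Gamma\setminus\{z\})$. Because $z=x_t(z)$ does not occur in $J$, the colon ideal $J:w$ is generated by the monomials $m_Q/\gcd(m_Q,w)$, where $Q$ ranges over the directed $t$-paths of $\Gamma\setminus\{z\}$ and $m_Q$ is the associated path monomial. The structural input is that, in a tree, the overlap of any such $Q$ with the spine $P(z)$ is a contiguous downward segment, so I would classify each $Q$ by whether it meets $\{x_0(z)\}\cup V(P(z))$ and, if it meets the spine, by the largest index $j$ with $x_j(z)\in Q$ (all in the notation of Notation \ref{nota}). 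A path disjoint from $\{x_0(z)\}\cup V(P(z))$ contributes $m_Q$ itself, and such a $Q$ lies either in $\Gamma(z)$, giving a generator of $I_t(\Gamma(z))$, or inside a single $\Delta_j^{\Gamma}(z)$, where (being a $t$-path, hence divisible by a $(t-j)$-subpath in $\Delta_j^{\Gamma}(z)$) it contributes to $I_{t-j}(\Delta_j^{\Gamma}(z))$. A path meeting the spine but avoiding $x_0(z)$ starts at some $x_a(z)$, runs down the spine to $x_j(z)$ with $a\le j$, and then branches into $\Delta_j^{\Gamma}(z)$; dividing by $\gcd(m_Q,w)=x_a(z)\cdots x_j(z)$ leaves a directed path on at least $t-j$ vertices inside $\Delta_j^{\Gamma}(z)$, again divisible by a $(t-j)$-subpath, so it contributes to $I_{t-j}(\Delta_j^{\Gamma}(z))$. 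Finally, every $Q$ containing $x_0(z)$ produces a quotient divisible by $x_0(z)$, absorbed into the generator $(x_0(z))$, which itself arises from $Q=x_0(z),x_1(z),\dots,x_{t-1}(z)$ (this summand being vacuous when $x_0(z)$ does not exist). Assembling the four types yields the asserted equality.

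For part (2), the unique $t$-path of $\Gamma$ through the top-level leaf $z$ is $w$, so $I_t(\Gamma)=J+(w)$ and we obtain the short exact sequence
$$0 \longrightarrow \frac{R}{J:w}(-t)\xrightarrow{\ \cdot w\ }\frac{R}{J}\longrightarrow \frac{R}{I_t(\Gamma)}\longrightarrow 0 .$$
The usual regularity inequality then gives $\reg(R/I_t(\Gamma))\le \max\{\reg(R/J),\ \reg(R/(J:w))+t-1\}$. By part (1), $J:w$ is a sum of ideals supported on pairwise disjoint variable sets together with $(x_0(z))$; since over a field the regularity of such a sum is the sum of the regularities and $\reg(R/(x_0(z)))=0$, we obtain $\reg(R/(J:w))=\reg(R/I_t(\Gamma(z)))+\sum_{j=0}^{t-1}\reg(R/I_{t-j}(\Delta_j^{\Gamma}(z)))$, which reproduces the second entry of the maximum after adding $t-1$.

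The main obstacle is promoting this bound to the stated equality: the short exact sequence alone yields only an inequality, and in the boundary case where the two candidate values coincide one must rule out cancellation in the associated long exact sequence. I would close this gap exactly as the other regularity computations in the paper do, by invoking a splitting-type regularity lemma in the spirit of \cite[Lemma 2.10]{HHJ13} or \cite[Theorem 4.7]{GHJMNN19} to force $\reg(R/I_t(\Gamma))$ to equal the maximum rather than merely be bounded by it. Verifying the hypotheses of such a lemma for multiplication by $w$ is the one delicate step; everything else is the routine monomial bookkeeping sketched above.
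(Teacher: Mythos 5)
The paper offers no independent proof of this lemma: it is introduced with the words ``we collect a few results from \cite{BRT}'' and both parts are justified exactly by the citations \cite[Lemma 2.8]{BRT} and \cite[Corollary 3.3]{BRT}, so your decision to treat the citation as the proof matches the paper's own approach. Your supplementary sketch is essentially sound (part (1)'s path classification and the height bound $\hgt(\Delta_j^{\Gamma}(z))\le t-j-1$ make both inclusions work), and the one step you flag as delicate --- promoting the short-exact-sequence bound in (2) to an equality --- is handled in \cite{BRT} not by a colon-ideal regularity lemma but by a Betti splitting of $I_t(\Gamma)=I_t(\Gamma\setminus\{z\})+(x_1(z)\cdots x_t(z))$, which applies because $x_1(z)\cdots x_t(z)$ is the unique generator divisible by the leaf variable $z$ and principal ideals have linear resolutions, so the regularity of the splitting equals the maximum on the nose and the boundary case you worry about never causes a loss.
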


\begin{lemma}\label{lemm1}
 Let $(\Gamma,x_0)$ be a rooted tree with $\hgt(\Gamma) \geq 1,$ and let $\Gamma'$ be an induced rooted subforest of $\Gamma$. Then, $\reg\left(\frac{R}{I_t(\Gamma')}\right) \leq \reg\left(\frac{R}{I_t(\Gamma)}\right).$
\end{lemma}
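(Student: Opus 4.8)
The plan is to identify $I_t(\Gamma')$ with the restriction of $I_t(\Gamma)$ to the variables indexed by the vertices of $\Gamma'$, and then to invoke the standard monotonicity of regularity of squarefree monomial ideals under passage to an induced subcomplex. First I would set $W = V(\Gamma')$ and observe that, because $\Gamma'$ is an \emph{induced} subforest, a sequence $x_{i_1},\ldots,x_{i_t}$ of vertices of $W$ is a directed $t$-path of $\Gamma'$ if and only if it is a directed $t$-path of $\Gamma$: each edge $(x_{i_j},x_{i_{j+1}})$ of such a path is an edge of $\Gamma$ with both endpoints in $W$, hence an edge of the induced graph $\Gamma'$, and conversely every edge of $\Gamma'$ is an edge of $\Gamma$. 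Consequently the minimal monomial generators of $I_t(\Gamma')$ are exactly those minimal generators $x_{i_1}\cdots x_{i_t}$ of $I_t(\Gamma)$ whose support lies in $W$.

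Next I would phrase this as a Stanley--Reisner statement. Write $I_t(\Gamma) = I_{\Delta}$ for the simplicial complex $\Delta$ on $V(\Gamma)$ whose faces are the subsets containing no directed $t$-path of $\Gamma$; the previous paragraph shows that $I_t(\Gamma')$ is the Stanley--Reisner ideal of the induced subcomplex $\Delta_W$ inside $\sk[W]$, since a minimal non-face of $\Delta_W$ is precisely a directed $t$-path of $\Gamma$ contained in $W$, i.e. a directed $t$-path of $\Gamma'$. I would then apply Hochster's formula: for every $S \subseteq W$ one has $(\Delta_W)_S = \Delta_S$, so
\[
\beta_{i,j}\!\left(\frac{\sk[W]}{I_t(\Gamma')}\right) = \sum_{\substack{S \subseteq W \\ |S| = j}} \dim_{\sk}\tilde H_{j-i-1}(\Delta_S;\sk)
\]
is a partial sum of $\sum_{S \subseteq V(\Gamma),\, |S|=j}\dim_{\sk}\tilde H_{j-i-1}(\Delta_S;\sk) = \beta_{i,j}(R/I_t(\Gamma))$. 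As every summand is nonnegative, this gives $\beta_{i,j}(\sk[W]/I_t(\Gamma')) \le \beta_{i,j}(R/I_t(\Gamma))$ for all $i,j$. Taking the maximum of $j-i$ over the nonzero Betti numbers yields $\reg(\sk[W]/I_t(\Gamma')) \le \reg(R/I_t(\Gamma))$, and since the variables of $R$ outside $W$ form a polynomial extension that leaves regularity unchanged, $\reg(R/I_t(\Gamma')) = \reg(\sk[W]/I_t(\Gamma'))$, which is the claim.

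I do not expect a genuine obstacle: the only points requiring care are checking that induced-ness makes the generating sets of $I_t(\Gamma)$ and $I_t(\Gamma')$ agree over $W$, and comparing the two regularities in a consistent ambient ring. One could alternatively delete the vertices of $V(\Gamma)\setminus W$ one at a time and induct, which is the $s=1$ instance of the induced-subcomplex Betti-number comparison already used in the proof of Theorem \ref{linear} via \cite[Theorem 3.6]{BCDMS}. The temptation to run a naive short-exact-sequence induction on a single variable $x_v$ is what I would \emph{avoid}, since that route forces a bound on $\reg(R/(I_t(\Gamma):x_v))$ — the ``link'' direction, which is not controlled by $\reg(R/I_t(\Gamma))$ in general. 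The restriction/Hochster viewpoint sidesteps this entirely, which is why I would use it.
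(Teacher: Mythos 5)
Your proof is correct and takes essentially the same route as the paper: the paper's entire proof is the citation \cite[Lemma 2.5]{HaWood}, which is precisely the restriction statement you establish, namely that induced-ness identifies $I_t(\Gamma')$ with the restriction of the squarefree ideal $I_t(\Gamma)$ to the variables in $W=V(\Gamma')$, after which Hochster's formula gives the termwise Betti-number comparison and hence the regularity inequality. You have simply unpacked the cited lemma with its standard proof, so there is nothing to correct.
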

\begin{proof}
The assertion follows from \cite[Lemma 2.5]{HaWood}.
\end{proof}

We now provide a procedure to study the regularity of $t$-path ideals of rooted trees. 
\begin{lemma} \label{lem2}
Let $(\Gamma, x_0)$ be a rooted tree with $\hgt(\Gamma) \geq t-1$. With Notation \ref{nota}, set $$\alpha(\Gamma) = \max_{ \ell_{\Gamma}(z)=\hgt(\Gamma)}\left\{\reg\left(\frac{R}{I_t(\Gamma(z))}\right)+ \sum_{j=0}^{t-1} \reg\left(\frac{R}{I_{t-j}(\Delta_j^{\Gamma}(z))}\right)+(t-1)\right\}.$$  Then,  \begin{align*}
    \reg\left(\frac{R}{I_t(\Gamma) } \right) \leq & \max \left\{\reg\left(\frac{R}{I_t(\Gamma')}\right), \alpha(\Gamma)
\right\},
\end{align*} where $\Gamma'=\Gamma\setminus \{ w \; : \; \ell_{\Gamma}(w)=\hgt(\Gamma)\}.$
\end{lemma}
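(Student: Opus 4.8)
The plan is to induct on the number $m := |\{w : \ell_{\Gamma}(w)=\hgt(\Gamma)\}|$ of top-level leaves of $\Gamma$, peeling them off one at a time and feeding each deletion into Lemma \ref{lemma}(2). For a top-level leaf $z$ write
$$\Phi_{\Gamma}(z) := \reg\left(\frac{R}{I_t(\Gamma(z))}\right) + \sum_{j=0}^{t-1}\reg\left(\frac{R}{I_{t-j}(\Delta_j^{\Gamma}(z))}\right) + (t-1),$$
so that $\alpha(\Gamma)=\max_z \Phi_{\Gamma}(z)$ and Lemma \ref{lemma}(2) reads $\reg(R/I_t(\Gamma))=\max\{\reg(R/I_t(\Gamma\setminus\{z\})),\Phi_{\Gamma}(z)\}$. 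In the base case $m=1$ the single top-level leaf $z$ satisfies $\Gamma\setminus\{z\}=\Gamma'$ and $\Phi_{\Gamma}(z)\le\alpha(\Gamma)$, so Lemma \ref{lemma}(2) yields the claim immediately.

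For the inductive step $m\ge 2$, fix a top-level leaf $z$ and set $\tilde\Gamma=\Gamma\setminus\{z\}$. Since $m\ge 2$ we have $\hgt(\tilde\Gamma)=\hgt(\Gamma)$, the top-level leaves of $\tilde\Gamma$ are exactly the remaining $m-1$ top-level leaves of $\Gamma$, and deleting all of them from $\tilde\Gamma$ returns $\Gamma'$ again. Applying Lemma \ref{lemma}(2) and then the induction hypothesis to $\tilde\Gamma$ gives
$$\reg\left(\frac{R}{I_t(\Gamma)}\right) = \max\left\{\reg\left(\frac{R}{I_t(\tilde\Gamma)}\right),\,\Phi_{\Gamma}(z)\right\} \le \max\left\{\reg\left(\frac{R}{I_t(\Gamma')}\right),\,\alpha(\tilde\Gamma),\,\Phi_{\Gamma}(z)\right\}.$$
As $\Phi_{\Gamma}(z)\le\alpha(\Gamma)$ by definition, everything reduces to proving $\alpha(\tilde\Gamma)\le\alpha(\Gamma)$.

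To get this I would compare $\Phi_{\tilde\Gamma}(w)$ with $\Phi_{\Gamma}(w)$ for each top-level leaf $w$ of $\tilde\Gamma$. Because every vertex of the defining path $P(w)$ other than $w$, as well as the parent $x_0(w)$, has a child and hence is not a leaf, the leaf $z\neq w$ avoids $P(w)\cup\{x_0(w)\}$; thus the path data, the root $x_0(w)$, and the level of $w$ are all unchanged in $\tilde\Gamma$. Using the vertex partition $V(\Gamma)=\Gamma(w)\sqcup P(w)\sqcup\bigsqcup_{j}\Delta_j^{\Gamma}(w)$ furnished by Notation \ref{nota}, the removed vertex $z$ lies in exactly one block, so $\tilde\Gamma(w)$ is an induced rooted subforest of $\Gamma(w)$ and each $\Delta_j^{\tilde\Gamma}(w)$ is an induced rooted subforest of $\Delta_j^{\Gamma}(w)$ (equal, except for the one block that loses $z$). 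Lemma \ref{lemm1} then gives the termwise inequalities, whence $\Phi_{\tilde\Gamma}(w)\le\Phi_{\Gamma}(w)\le\alpha(\Gamma)$; maximizing over $w$ yields $\alpha(\tilde\Gamma)\le\alpha(\Gamma)$ and closes the induction.

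The main obstacle is exactly the bookkeeping of this last paragraph: one must carefully confirm that deleting the top-level leaf $z$ leaves $P(w)$, $x_0(w)$, and $\ell(w)$ intact and disturbs only a single block of the partition attached to $w$, so that Lemma \ref{lemm1} applies block-by-block. Once that is in place, the remainder is a routine two-step recursion—split off $z$ via Lemma \ref{lemma}(2), then invoke the inductive hypothesis on $\tilde\Gamma$.
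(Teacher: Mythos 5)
Your proposal is correct and follows essentially the same route as the paper's proof: induction on the number of top-level leaves, splitting off one leaf via Lemma \ref{lemma}(2), invoking the inductive hypothesis on $\Gamma\setminus\{z\}$, and reducing to $\alpha(\Gamma\setminus\{z\})\leq\alpha(\Gamma)$ via Lemma \ref{lemm1} applied to the induced subforests $(\Gamma\setminus\{z\})(w)$ and $\Delta_j^{\Gamma\setminus\{z\}}(w)$. Your bookkeeping paragraph (checking that $z$ avoids $P(w)\cup\{x_0(w)\}$ so only one block of the partition is disturbed) is in fact slightly more careful than the paper, which simply asserts the induced-subtree relationship.
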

\begin{proof}
For any rooted tree $(\Gamma, x_0)$, we set $n(\Gamma)=|\{ w \; : \; \ell_{\Gamma}(w) = \hgt(\Gamma)\}|.$ We use induction on $n(\Gamma)$. If $n(\Gamma)=1$,  then the result is true by Lemma \ref{lemma} and the fact that $\Gamma'=\Gamma \setminus \{z\}$ for the leaf $z$ with $\ell_{\Gamma}(z)=\hgt(\Gamma)$. Assume that $n(\Gamma)>1$ and the result is true for rooted trees having the number of leaves at the highest level strictly less than $n(\Gamma)$. Let $w$ be a leaf of $\Gamma$, which belongs to the highest level of $\Gamma$. Then, in the view of Lemma \ref{lemma}, it is enough to prove that \begin{align*}
 \reg\left(\frac{R}{I_t(\Gamma \setminus \{w\})}  \right) \leq  \max\left\{\reg\left(\frac{R}{I_t(\Gamma')}\right), \alpha(\Gamma)
\right\}  . 
\end{align*} Note that $n(\Gamma \setminus \{w\})< n(\Gamma)$ and $\hgt(\Gamma \setminus \{w\}) =\hgt(\Gamma).$ Therefore, by induction, we have \begin{align*}
    \reg\left(\frac{R}{I_t(\Gamma \setminus \{w\})}  \right) \leq & \max\left\{\reg\left(\frac{R}{I_t((\Gamma \setminus \{w\})\setminus \{z \; : \; \ell_{\Gamma \setminus \{w\}} (z)=\hgt(\Gamma \setminus \{w\}))}\right), \alpha(\Gamma \setminus \{w\})
\right\}\\ = & \max\left\{\reg\left(\frac{R}{I_t(\Gamma')}\right), \alpha(\Gamma \setminus \{w\})
\right\},
\end{align*} where $$\alpha(\Gamma \setminus \{w\}) = \max_{ \ell_{\Gamma \setminus \{w\}}(z)=\hgt(\Gamma \setminus \{w\})}\left\{\reg\left(\frac{R}{I_t((\Gamma \setminus \{w\})(z))}\right)+ \sum_{j=0}^{t-1} \reg\left(\frac{R}{I_{t-j}(\Delta_j^{\Gamma \setminus \{w\}}(z))}\right)+(t-1)\right\}.$$ Next, note that for each $z$ with $\ell_{\Gamma \setminus \{w\}}(z)=\hgt(\Gamma \setminus \{w\})$, $(\Gamma  \setminus \{w\})(z)$ and $\Delta^{\Gamma \setminus \{w\}}_j(z)$ are induced subtrees of $\Gamma (z)$ and $\Delta^{\Gamma }_j(z)$, respectively. Thus, by Lemma \ref{lemm1},
\begin{align*}
    \alpha(\Gamma \setminus \{w\}) &= \max_{ \ell_{\Gamma \setminus \{w\}}(z)=\hgt(\Gamma \setminus \{w\})}\left\{\reg\left(\frac{R}{I_t((\Gamma \setminus \{w\})(z))}\right)+ \sum_{j=0}^{t-1} \reg\left(\frac{R}{I_{t-j}(\Delta_j^{\Gamma \setminus \{w\}}(z))}\right)+(t-1)\right\}\\ & \leq \max_{ \ell_{\Gamma \setminus \{w\}}(z)=\hgt(\Gamma \setminus \{w\})}\left\{\reg\left(\frac{R}{I_t(\Gamma(z))}\right)+ \sum_{j=0}^{t-1} \reg\left(\frac{R}{I_{t-j}(\Delta_j^{\Gamma}(z))}\right)+(t-1)\right\}\\ & \leq \max_{ \ell_{\Gamma}(z)=\hgt(\Gamma)}\left\{\reg\left(\frac{R}{I_t(\Gamma(z))}\right)+ \sum_{j=0}^{t-1} \reg\left(\frac{R}{I_{t-j}(\Delta_j^{\Gamma}(z))}\right)+(t-1)\right\}\\ & =\alpha(\Gamma).
\end{align*}
Hence,  the desired result follows.
\end{proof}

\begin{remark}\label{clean}
    Let $(\Gamma,x_0)$ be a rooted tree and $I_t(\Gamma)$ be the path ideal of length $t-1$ with $t \geq 1$. Note that for a leaf $x$ at a level strictly less than $(t - 1),$ the generators of  $I_t(\Gamma \setminus \{x\})$ and $I_t(\Gamma)$ (in different polynomial rings) are same, and hence,  
$ \reg\left(\frac{R}{I_t(\Gamma \setminus \{x\})} \right)= \reg\left(\frac{R}{I_t(\Gamma) }\right)$. Therefore, we successively remove all leaves at levels strictly less than $(t-1)$. The
 rooted tree obtained after this process is called the {\it clean form} of $\Gamma,$ and it is  denoted by $C(\Gamma ).$ Also, note that $I_t(\Gamma)=I_t(C(\Gamma)).$
\end{remark}

\begin{remark}\label{clean1}
   Let $(\Gamma,x_0)$ be a perfect rooted tree of height $\hgt(\Gamma)\geq 2$ and let $t$ be a positive integer such that $\left\lceil{\frac{\hgt(\Gamma)+1}{2}}\right\rceil \leq t \leq \hgt(\Gamma)+1$. For a leaf $z$ in $\Gamma$, following Notation \ref{nota}, if   $\Gamma(z)$ is not a perfect rooted tree and $\hgt(\Gamma(z)) =\hgt(\Gamma)$, then we claim that $C(\Gamma(z))$ is a perfect rooted tree with $\hgt(C(\Gamma(z))) =\hgt(\Gamma)$.  Suppose that $x$ is a leaf in $\Gamma(z)$ such that $\ell_{\Gamma(z)}(x)< \hgt(\Gamma(z)).$ Then, following Notation \ref{nota}, $x$ is the parent of $x_0(z)$ which implies that $\ell_{\Gamma(z)}(x) = \hgt(\Gamma)-t-1$. Since $t \geq \left\lceil{\frac{\hgt(\Gamma)+1}{2}}\right\rceil >\frac{\hgt(\Gamma)}{2}$, we get $\ell_{\Gamma(z)}(x) = \hgt(\Gamma)-t-1<t-1$. Thus, by Remark \ref{clean}, $C(\Gamma(z))$ is a perfect rooted tree with $\hgt(C(\Gamma(z))) =\hgt(\Gamma)$.
\end{remark}
We now prove the main result of this section and its consequences.
\begin{theorem} \label{maint}
Let $(\Gamma,x_0)$ be a perfect rooted tree with $\hgt(\Gamma) \geq 1$ and let $t$ be a positive integer such that $\left\lceil{\frac{\hgt(\Gamma)+1}{2}}\right\rceil \leq t \leq \hgt(\Gamma) +1$.  Then $$ \reg\left(\frac{R}{I_t(\Gamma)}  \right) = \sum\limits_{\ell_{\Gamma}(x)=\hgt(\Gamma) -t}^{\hgt(\Gamma)-2}\deg^{+}_{\Gamma}(x).$$ 
\end{theorem}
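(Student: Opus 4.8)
The natural strategy is induction on $\hgt(\Gamma)$, using the recursive machinery of Lemma~\ref{lem2} to peel off the leaves at the top level, together with the two boundary cases already proved in Theorem~\ref{th1} (the case $t=\hgt(\Gamma)+1$) and Theorem~\ref{thm2} (the case $t=\hgt(\Gamma)$). When $t=\hgt(\Gamma)+1$ the claimed sum runs from $\ell_\Gamma(x)=-1$, which we interpret as starting at $\ell_\Gamma(x)=0$, so the formula reduces to $1+\sum_{\ell_\Gamma(x)=0}^{\hgt(\Gamma)-2}\deg^+_\Gamma(x)$ exactly as in Theorem~\ref{th1}; and when $t=\hgt(\Gamma)$ it reduces to Theorem~\ref{thm2}. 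These two cases anchor the induction, so the real work is for $\lceil(\hgt(\Gamma)+1)/2\rceil\le t\le\hgt(\Gamma)-1$.

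First I would fix a leaf $z$ with $\ell_\Gamma(z)=\hgt(\Gamma)$ and apply Lemma~\ref{lem2}, which bounds $\reg(R/I_t(\Gamma))$ from above by the maximum of $\reg(R/I_t(\Gamma'))$, where $\Gamma'=\Gamma\setminus\{w:\ell_\Gamma(w)=\hgt(\Gamma)\}$, and the quantity $\alpha(\Gamma)$ built from the pieces $\Gamma(z)$ and $\Delta^\Gamma_j(z)$. The point of the hypothesis $t>\hgt(\Gamma)/2$, exploited in Remark~\ref{clean1}, is that after cleaning, $C(\Gamma(z))$ is again a \emph{perfect} rooted tree of the same height; similarly each $\Delta^\Gamma_j(z)$ is a disjoint union of perfect rooted trees of height $t-j-1$ (Notation~\ref{nota}), whose $I_{t-j}$-regularity I can evaluate by Theorem~\ref{th1} since there $t-j=\hgt+1$ for each summand. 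So every term entering $\alpha(\Gamma)$ is computable by the already-established formulas, and I would compute $\alpha(\Gamma)$ explicitly as a sum of outdegrees. The inductive hypothesis applies to $\Gamma(z)$ (same height, perfect, same $t$) and gives $\reg(R/I_t(\Gamma(z)))=\sum_{\ell=\hgt(\Gamma)-t}^{\hgt(\Gamma)-2}\deg^+$ over the vertices of $\Gamma(z)$. Assembling these and using perfection to rewrite the level-indexed outdegree sums over $\Gamma(z)$, over the $\Delta^\Gamma_j(z)$, and over $P(z)$ as a single sum over all of $\Gamma$ should reproduce $\sum_{\ell_\Gamma(x)=\hgt(\Gamma)-t}^{\hgt(\Gamma)-2}\deg^+_\Gamma(x)$, giving the upper bound.

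For the matching lower bound I would exhibit an induced rooted subforest whose $t$-path regularity already equals the target sum, and invoke Lemma~\ref{lemm1} (regularity is monotone under passing to induced subforests). The most economical choice is the \emph{clean form} $C(\Gamma)$ together with a carefully selected branch along which the recursion in Lemma~\ref{lemma}(2) is forced to choose the $\alpha$-term rather than the $\Gamma'$-term; concretely, I expect to pick the leaf $z$ realizing the maximum in $\alpha(\Gamma)$ and argue, as in the final inequality chain of Theorem~\ref{thm2}, that the $\Gamma'$-contribution is strictly smaller, so that equality in Lemma~\ref{lemma}(2) propagates the exact value down the induction. Combining the two bounds yields the asserted equality.

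The main obstacle I anticipate is the bookkeeping that turns the recursive bound into the \emph{exact} closed form — specifically, proving that the maximum defining $\alpha(\Gamma)$ is attained and equals precisely the missing top slice $\sum_{\ell_\Gamma(x)=\hgt(\Gamma)-t}^{\hgt(\Gamma)-2}\deg^+_\Gamma(x)$ of the outdegree sum, rather than merely bounding it. This requires checking that the level-shift identities $\ell_{\Gamma(z)}(x)=\ell_\Gamma(x)$ and $\ell_{\Delta^\Gamma_j(z)}(x)=\ell_\Gamma(x)-(j+1)$ line up the three families of outdegree contributions with no overlap and no gap, and that the strict inequality $\reg(R/I_t(\Gamma'))<\alpha(\Gamma)$ holds throughout the admissible range of $t$ (this is where the lower endpoint $t\ge\lceil(\hgt(\Gamma)+1)/2\rceil$ is genuinely used, via Remark~\ref{clean1}). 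Verifying this strict inequality uniformly in $t$, rather than just in the two extreme cases, is where I expect the delicate part of the argument to lie.
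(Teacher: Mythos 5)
Your upper-bound strategy is essentially the paper's: apply Lemma~\ref{lem2}, evaluate the $\Delta_j^{\Gamma}(z)$ contributions by Theorem~\ref{th1}, use Remark~\ref{clean1} to keep $C(\Gamma(z))$ perfect, and anchor at Theorems~\ref{th1} and~\ref{thm2}. However, as written your induction is circular. You induct ``on $\hgt(\Gamma)$'' and then explicitly apply the inductive hypothesis to $\Gamma(z)$, which you yourself note has the \emph{same} height, the same $t$, and is again perfect (after cleaning); induction on height alone cannot justify this. The paper's proof inducts on $\hgt(\Gamma)+n(\Gamma)$, where $n(\Gamma)$ is the number of leaves at the top level: the tree $\Gamma'=\Gamma\setminus\{v:\ell_{\Gamma}(v)=\hgt(\Gamma)\}$ has smaller height, while $C(\Gamma(z))$ has the same height but $n(C(\Gamma(z)))<n(\Gamma)$, so both recursive calls strictly decrease the induction parameter. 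This strengthened induction also forces an extra base case that your plan omits: when $n(\Gamma)=1$ the tree is a directed path, which the paper settles by the known formula $\reg(R/I_t(\Gamma))=(t-1)\lceil (n-t+1)/(t+1)\rceil$ together with the hypothesis $t\geq\lceil(\hgt(\Gamma)+1)/2\rceil$ forcing the ceiling to equal $1$.

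Your lower bound also takes a harder road than necessary, and your anticipated ``delicate part'' is a red herring. Since Lemma~\ref{lemma}(2) is an exact maximum, $\reg(R/I_t(\Gamma))$ is automatically $\geq$ the $\alpha$-type term for \emph{any} top-level leaf $z$; no strict inequality $\reg(R/I_t(\Gamma'))<\alpha(\Gamma)$ and no identification of which term attains the maximum is ever needed, so the uniform-in-$t$ comparison you flag as the main obstacle can be skipped entirely (it would, moreover, again require the corrected induction to evaluate $\reg(R/I_t(\Gamma(z)))$). The paper's lower bound avoids the recursion altogether: let $U$ be the set of vertices at level $\hgt(\Gamma)-t$, take the disjoint union $\Gamma_U=\bigsqcup_{u\in U}\Gamma_u$ of the induced subtrees rooted at the $u\in U$ (each perfect of height exactly $t$), compute $\reg(R/I_t(\Gamma_u))$ by Theorem~\ref{thm2}, sum over the disjoint pieces, and invoke monotonicity (Lemma~\ref{lemm1}); the total is exactly the claimed sum. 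Also note that for a perfect tree $C(\Gamma)=\Gamma$, so ``the clean form of $\Gamma$ together with a selected branch'' does not actually name a proper induced subforest. With the induction parameter repaired and the lower bound replaced (or justified via the unconditional inequality from Lemma~\ref{lemma}(2)), your outline becomes the paper's proof.
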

\begin{proof}
First, we show that $ \reg\left(\frac{R}{I_t(\Gamma) } \right) \geq  \sum\limits_{\ell_{\Gamma}(x) = \hgt(\Gamma) -t}^{\hgt(\Gamma)-2}\deg^{+}_{\Gamma}(x).$ Let $U=\{w\in V(\Gamma) : \ell_{\Gamma}(w)=\hgt(\Gamma)-t\}$. For each $u \in U$, let $\Gamma_u$ denote the induced rooted subtree of $\Gamma$ rooted at $u$. Note that $\hgt(\Gamma_u)=t$ for each $u \in U.$ So, applying Theorem \ref{thm2}, we get $ \reg\left(\frac{R}{I_t(\Gamma_u)}  \right) = \sum\limits_{\ell_{\Gamma_u}(x)=0}^{\hgt(\Gamma_u)-2}\deg^{+}_{\Gamma_u}(x).$ Set 
$\Gamma_U=\bigsqcup\limits_{u \in U}\Gamma_u.$ Then, $\Gamma_{u} \cap \Gamma_{v}=\emptyset$ for distinct $u,v \in U$. Thus,
 \begin{eqnarray*}
\reg\left(\frac{R}{I_{t}(\Gamma_U)} \right)=\sum\limits_{u \in U} \reg\left(\frac{R}{I_{t}(\Gamma_u)} \right)
&=& \sum\limits_{u \in U}\sum\limits_{\ell_{\Gamma_u}(x)=0}^{\hgt(\Gamma_u)-2}\deg^{+}_{\Gamma_u}(x)= \sum\limits_{\ell_{\Gamma}(x)=\hgt(\Gamma)-t}^{\hgt(\Gamma)-2}\deg^{+}_{\Gamma}(x). 
\end{eqnarray*}  Since $\Gamma_U$ is an induced subforest of $\Gamma$, using  Lemma \ref{lemm1}, we get
$$\reg\left(\frac{R}{I_t(\Gamma)}\right) \geq  \sum\limits_{\ell_{\Gamma}(x)=\hgt(\Gamma) -t}^{\hgt(\Gamma)-2}\deg^{+}_{\Gamma}(x).$$  

Next we claim that $\reg\left(\frac{R}{I_t(\Gamma)}\right) \leq  \sum\limits_{\ell_{\Gamma}(x)=\hgt(\Gamma) -t}^{\hgt(\Gamma)-2}\deg^{+}_{\Gamma}(x).$  The proof proceeds by induction on $\hgt(\Gamma)+n(\Gamma),$ where $n(\Gamma)=|\{ w \; : \; \ell_{\Gamma}(w) = \hgt(\Gamma)\}|.$ If $\hgt(\Gamma)=1$, then $t=1$ or $t=2$. For $t=1$, the result is obvious. If $t=2$,  by convention, we have $\deg^{+}_{\Gamma}(x)=1,$ where  $\ell_{\Gamma}(x)=-1$, and thus, by Theorem \ref{th1}, $\reg\left(\frac{R}{I_t(\Gamma)}  \right) = \sum\limits_{\ell_{\Gamma}(x)=\hgt(\Gamma) -t}^{\hgt(\Gamma)-2}\deg^{+}_{\Gamma}(x).$ If $n(\Gamma)=1$, then $\Gamma$ is a directed path, and hence using \cite[Corollary 5.4]{BRT} we get $ \reg\left(\frac{R}{I_t(\Gamma)}  \right) =(t-1)\left\lceil \frac{n-t+1}{t+1}  \right\rceil$. Now $\left \lceil \frac{n-t+1}{t+1}  \right\rceil=1$, because $n=\hgt(\Gamma)-1$ and $ t \geq \left\lceil{\frac{\hgt(\Gamma)+1}{2}}\right\rceil.$ Also, $\sum\limits_{\ell_{\Gamma}(x)=\hgt(\Gamma) -t}^{\hgt(\Gamma)-2}\deg^{+}_{\Gamma}(x)=\sum\limits_{\ell_{\Gamma}(x)=\hgt(\Gamma) -t}^{\hgt(\Gamma)-2}1=t-1$. Thus the result holds in this case. Assume that $\hgt(\Gamma)>1$ and $n(\Gamma)>1$. Let us now suppose that the result holds for all rooted trees  $\Gamma''$ with $\hgt(\Gamma'')+n(\Gamma'')<\hgt(\Gamma)+n(\Gamma).$ If $t=\hgt(\Gamma)$ or $t=\hgt(\Gamma)+1$, using Theorem \ref{th1} and Theorem \ref{thm2}, we get the desired result. Thus we assume that $t \leq \hgt(\Gamma)-1.$ Let $\Gamma'=\Gamma \setminus \{v:\ell_{\Gamma}(v)=\hgt(\Gamma)\}.$ Then $t \leq \hgt(\Gamma')=\hgt(\Gamma)-1.$ Note that $\displaystyle \sum\limits_{\ell_{\Gamma}(x)=a}\deg_{\Gamma}^{+}(x)\leq \sum\limits_{\ell_{\Gamma}(x)=a+1}\deg_{\Gamma}^{+}(x)$ for all $a=0,1,\ldots,\hgt(\Gamma) -2$, because $\Gamma$ is a perfect rooted tree. Therefore, $n(\Gamma ') =\displaystyle \sum\limits_{\ell_{\Gamma}(x)=\hgt(\Gamma)-2}\deg_{\Gamma}^{+}(x)\leq \sum\limits_{\ell_{\Gamma}(x)=\hgt(\Gamma)-1}\deg_{\Gamma}^{+}(x)=n(\Gamma).$ Using induction, we have \begin{align*}
    \reg\left(\frac{R}{I_t(\Gamma')}\right) & \leq  \sum\limits_{\ell_{\Gamma'}(x)=\hgt(\Gamma')-t}^{\hgt(\Gamma')-2}\deg_{\Gamma'}^{+}(x)=\sum\limits_{\ell_{\Gamma}(x)=\hgt(\Gamma)-t-1}^{\hgt(\Gamma)-3}\deg_{\Gamma}^{+}(x) \leq \sum\limits_{\ell_{\Gamma}(x)=\hgt(\Gamma)-t}^{\hgt(\Gamma)-2}\deg_{\Gamma}^{+}(x).
\end{align*} 
Now in view of Theorem \ref{lem2}, it is enough to show that $\alpha(\Gamma) \leq \sum\limits_{\ell_{\Gamma}(x)=\hgt(\Gamma)-t}^{\hgt(\Gamma)-2}\deg_{\Gamma}^{+}(x). $
Let $z$ be any leaf of $\Gamma$. Following Notation \ref{nota},  $\Delta_j^{\Gamma}(z)=\emptyset$ or $\Delta_j^{\Gamma}(z)$ is a perfect rooted forest with $\hgt((\Delta_j^{\Gamma}(z)))=t-j-1$. Using Theorem \ref{th1}, we get 
\begin{eqnarray*}
& &\sum\limits_{j=0}^{t-2} \reg\left(\frac{R}{I_{t-j}(\Delta_j^{\Gamma}(z))} \right)+(t-1)\\&=& \sum\limits_{j=0}^{t-2} \left(\deg_{\Gamma_j(z)}^{+}(x_j(z))-1+\sum\limits_{\ell_{\Delta_j^{\Gamma}(z)}(x)=0}^{t-j-3}\deg^+_{\Delta_j^{\Gamma}(z)}(x)\right)+ (t-1)\\ 
&=& \sum\limits_{j=0}^{t-2} \left(\deg_{\Gamma_j(z)}^{+}(x_j(z))+\sum\limits_{\ell_{\Delta_j^{\Gamma}(z)}(x)=0}^{t-j-3}\deg^+_{\Delta_j^{\Gamma}(z)}(x) \right)\\ 
&=& \sum\limits_{j=0}^{t-2} \left(\deg_{\Gamma_j(z)}^{+}(x_j(z))+\sum\limits_{\ell_{\Gamma_j(z) \setminus \Gamma_{j+1}(z)}(x)=1}^{\hgt(\Gamma_j(z))-2}\deg^+_{{\Gamma_j(z) \setminus \Gamma_{j+1}(z)}}(x) \right)= \sum\limits_{\ell_{\Gamma_0(z)}(x)=0}^{\hgt(\Gamma_0(z))-2}\deg_{\Gamma_0(z)}^{+}(x).
\end{eqnarray*} Now, consider the following two cases:\\
{\bf Case 1:} Suppose that $\hgt( \Gamma(z)) < \hgt(\Gamma)$. In this  case, $\hgt( \Gamma(z))= \hgt(\Gamma)-t-1<t-1$, because $t \geq \frac{\hgt(\Gamma)+1}{2}.$ Thus $I_t\left( \Gamma(z) \right)=(0)$ which implies that \begin{align*}
   \reg\left( \frac{R}{I_t(\Gamma(z))}\right)+\sum\limits_{j=0}^{t-1} \reg\left(\frac{R}{I_{t-j}(\Delta_j^{\Gamma}(z))}\right)+(t-1)& =\sum\limits_{\ell_{\Gamma_0(z)}(x)=0}^{\hgt(\Gamma_0(z))-2}\deg_{\Gamma_0(z)}^{+}(x) =  \sum\limits_{\ell_{\Gamma}(x)=\hgt(\Gamma)-t}^{\hgt(\Gamma)-2}\deg_{\Gamma}^{+}(x),  
\end{align*} 
because $\ell_{\Gamma_0(z)}(x)=\ell_{\Gamma}(x)-\hgt(\Gamma)+t$ and $t=\hgt(\Gamma_0(z))$.  \\
{\bf Case II:} Suppose that $\hgt( \Gamma(z)) = \hgt(\Gamma)$. By Remark \ref{clean1}, $C(\Gamma(z))$ is a perfect rooted tree with $\hgt(C(\Gamma(z))) =\hgt(\Gamma(z))$ and by Remark \ref{clean}, $I_t(\Gamma(z))=I_t(C(\Gamma(z)))$. Since $n(C(\Gamma(z)))<n(\Gamma)$, by induction, we have 
\begin{align*}
\reg\left(\frac{R}{I_{t}(\Gamma(z))} \right)\leq \sum\limits_{\ell_{C(\Gamma(z))}(x)=\hgt(C(\Gamma(z)))-t}^{\hgt(C(\Gamma(z)))-2}\deg_{C(\Gamma(z))}^{+}(x)=\sum\limits_{\ell_{\Gamma(z)}(x)=\hgt(\Gamma(z))-t}^{\hgt(\Gamma(z))-2}\deg_{\Gamma(z)}^{+}(x).
\end{align*} Therefore, 
\begin{align*}
   & \reg\left(\frac{R}{I_{t}(\Gamma(z))} \right)+\sum\limits_{j=0}^{t-2} \reg\left(\frac{R}{I_{t-j}(\Delta_j^{\Gamma}(z))} \right)+(t-1) \\& \leq \sum\limits_{\ell_{\Gamma(z)}(x)=\hgt(\Gamma(z))-t}^{\hgt(\Gamma(z))-2}\deg_{\Gamma(z)}^{+}(x) + \sum\limits_{\ell_{\Gamma_0(z)}(x)=0}^{\hgt(\Gamma_0(z))-2}\deg_{\Gamma_0(z)}^{+}(x) =  \sum\limits_{\ell_{\Gamma}(x)=\hgt(\Gamma)-t}^{\hgt(\Gamma)-2}\deg_{\Gamma}^{+}(x),
\end{align*} because $\ell_{\Gamma(z)}(x) = \ell_{\Gamma}(x)$,  $\ell_{\Gamma_0(z)}(x)=\ell_{\Gamma}(x)-\hgt(\Gamma)+t$ and $t=\hgt(\Gamma_0(z))$. Thus, it follows from  both the cases that  $\alpha(\Gamma) \leq \sum\limits_{\ell_{\Gamma}(x)=\hgt(\Gamma)-t}^{\hgt(\Gamma)-2}\deg_{\Gamma}^{+}(x)$, and hence, the claim follows. Hence the assertion follows.
\end{proof}

   As a  consequence, we obtain an upper bound for the regularity of $t$-path ideal of a rooted tree when $t\ge \left\lceil{\frac{\hgt(\Gamma)+1}{2}}\right\rceil.$ Authors in \cite{{BRT}} gave an upper bound for the regularity of $t$-path ideals of rooted trees and proved that 
$$\reg\left(\frac{R}{I_t(\Gamma)}\right) \leq  (t-1)\left(l_t(\Gamma)+p_t(\Gamma)\right),$$ where $l_t(\Gamma )$ is the number of leaves in $\Gamma$ whose level
is at least $t-1$ and $p_t(\Gamma)$ is the maximal number of pairwise disjoint paths of length $t$ in $\Gamma$. In the following, for $\left\lceil{\frac{\hgt(\Gamma)+1}{2}}\right\rceil \leq t \leq \hgt(\Gamma) +1$, we obtain an improved upper bound for the regularity of $t$-path ideals of rooted trees which depends on the number of leaves whose level lies between $t-1$ and $\hgt(\Gamma)-2$ and the outdegree of the vertices whose level lies between $\hgt(\Gamma)-t$ and $\hgt(\Gamma)-2$. In case when the number of leaves at $\hgt(\Gamma)$ level or $\hgt(\Gamma)-1$ level is sufficiently large, the upper bound obtained in \cite{BRT} is comparatively huge than the bound given below.

\begin{corollary}\label{cormain}
Let $(\Gamma,x_0)$ be a rooted tree with $\hgt(\Gamma)\geq 1$ and let $t$ be a positive integer such that $\left\lceil{\frac{\hgt(\Gamma)+1}{2}}\right\rceil \leq t \leq \hgt(\Gamma) +1$. Then 
$$\reg\left(\frac{R}{I_t(\Gamma)}\right) \leq  \sum\limits_{\ell_{\Gamma}(x)=\hgt(\Gamma) -t}^{\hgt(\Gamma)-2}\deg^{+}_{\Gamma}(x)+\sum\limits_{i=t-1}^{\hgt(\Gamma)-2}(\hgt(\Gamma)-i-1)l_{i}(\Gamma),$$ where $l_{i}(\Gamma)$ denotes the number of leaves in $\Gamma$ at the level $i$.
\end{corollary}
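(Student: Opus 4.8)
The plan is to deduce the inequality from the exact formula of Theorem \ref{maint} by enlarging $\Gamma$ to a perfect rooted tree and invoking the monotonicity of regularity under induced subforests (Lemma \ref{lemm1}). Write $h=\hgt(\Gamma)$. First I would pass to the clean form: by Remark \ref{clean}, $\reg(R/I_t(\Gamma))=\reg(R/I_t(C(\Gamma)))$, and cleaning deletes only leaves at levels $<t-1$. Hence it leaves the leaf counts $l_i$ unchanged for $i\geq t-1$, can only decrease each surviving outdegree $\deg^{+}_{\Gamma}(x)$, and preserves the height $h$ (the level-$h$ leaf exists because $t\leq h+1$, and it together with all its ancestors survives). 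So it suffices to establish the bound using the data of $C(\Gamma)$ and then replace $\deg^{+}_{C(\Gamma)}(x)$ by the larger $\deg^{+}_{\Gamma}(x)$ and $l_i(C(\Gamma))$ by $l_i(\Gamma)$ for $i\geq t-1$; in other words, I may assume $\Gamma$ has no leaf at level $<t-1$.

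Next I would build a perfect rooted tree $\hat\Gamma$ with $\hgt(\hat\Gamma)=h$ by attaching, below every leaf $x$ of $\Gamma$ sitting at a level $i<h$, a single pendant directed path descending to level $h$. Since these paths add only new vertices and edges incident to them, $\Gamma$ is an induced rooted subtree of $\hat\Gamma$, and because every leaf now lies at level $h$, the tree $\hat\Gamma$ is perfect. By Lemma \ref{lemm1} we get $\reg(R/I_t(\Gamma))\leq\reg(R/I_t(\hat\Gamma))$, and since $\hat\Gamma$ is perfect with $\lceil(h+1)/2\rceil\leq t\leq h+1$, Theorem \ref{maint} evaluates the right-hand side as $\sum_{\ell_{\hat\Gamma}(x)=h-t}^{h-2}\deg^{+}_{\hat\Gamma}(x)$.

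The heart of the argument is then a bookkeeping step, for which an edge count is cleanest. I would rewrite $\sum_{\ell_{\hat\Gamma}(x)=h-t}^{h-2}\deg^{+}_{\hat\Gamma}(x)$ as the number of directed edges of $\hat\Gamma$ whose tail lies at a level in $[h-t,h-2]$, and split these into edges inherited from $\Gamma$ and edges of the attached pendant paths. The $\Gamma$-edges with tail in that range contribute exactly $\sum_{\ell_{\Gamma}(x)=h-t}^{h-2}\deg^{+}_{\Gamma}(x)$. A pendant path grown from a leaf at level $i$ has edges with tails precisely at the levels $i,i+1,\ldots,h-1$; here the hypothesis $t\geq\lceil(h+1)/2\rceil$ forces $i\geq t-1\geq h-t$, so its tails inside $[h-t,h-2]$ are exactly the levels $i,\ldots,h-2$, numbering $h-i-1$. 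Summing over all leaves gives $\sum_{i=t-1}^{h-2}(h-i-1)l_i(\Gamma)$ (leaves at level $h-1$ or $h$ contribute $0$), and adding the two contributions produces the claimed bound after the replacement set up in the first paragraph.

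The main obstacle is precisely this combinatorial bookkeeping. One must verify that the clean-form reduction exactly annihilates the contributions of leaves at levels below $t-1$, which would otherwise enter with coefficient $t-1$ rather than $h-i-1$, and that the inequality $t-1\geq h-t$, equivalent to $t\geq\lceil(h+1)/2\rceil$, is what makes each pendant path's tails begin at its leaf level $i$ rather than at $h-t$, so that the per-leaf count collapses to the single coefficient $h-i-1$. Checking the boundary levels $h-1$ and $h$ (which carry coefficient $0$) and the possibly nonempty interval $[h-t,t-2]$ (empty of leaves after cleaning) completes the verification, and these boundary cases, together with the degenerate values $t=h$ and $t=h+1$ where Theorems \ref{th1} and \ref{thm2} apply directly, are the only places requiring care.
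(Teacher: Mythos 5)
Your proof is correct and follows essentially the same route as the paper: pass to the clean form $C(\Gamma)$, attach a pendant path below each leaf to build a perfect rooted tree of height $\hgt(\Gamma)$, evaluate its regularity by Theorem \ref{maint}, and conclude via Lemma \ref{lemm1}. Your bookkeeping is in fact slightly more careful than the paper's: the paper states the resulting formula for the enlarged tree as an equality directly in terms of $\deg^{+}_{\Gamma}$ and $l_i(\Gamma)$, whereas (as your replacement step makes explicit) cleaning can strictly decrease outdegrees, so in general one only has the inequality in the direction needed.
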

\begin{proof}
By Remark \ref{clean}, $I_t(\Gamma)=I_t(C(\Gamma))$. Let $z$ be any leaf in $C(\Gamma)$ at level $i$. Then, $i=\ell_{\Gamma}(z) \geq t-1.$  Now, attach a rooted path $(P(z),z_i):z_{i},z_{i+1},\ldots,z_{\hgt(\Gamma)}$ of height $\hgt(\Gamma)-i$ in $C(\Gamma)$ at $z$. We repeat this process for all leaves in $C(\Gamma)$ to obtain a perfect rooted tree $(\Gamma^{'},x_0)$ of height $\hgt(\Gamma)$. By Theorem \ref{maint}, we have $$\reg\left(\frac{R}{I_t(\Gamma^{'})}\right) = \sum\limits_{\ell_{\Gamma'}(x)=\hgt(\Gamma') -t}^{\hgt(\Gamma')-2}\deg^{+}_{\Gamma'}(x)= \sum\limits_{\ell_{\Gamma}(x)=\hgt(\Gamma) -t}^{\hgt(\Gamma)-2}\deg^{+}_{\Gamma}(x)+\sum\limits_{i=t-1}^{\hgt(\Gamma)-2}(\hgt(\Gamma)-i-1)l_i(\Gamma).$$ Since $(\Gamma,x_0)$ is an induced subtree of $(\Gamma^{'},x_0)$, the rest follows from  Lemma \ref{lemm1}.
\end{proof}

\section{Regularity of powers of facet ideals of simplicial trees}\label{section5}
In this section, we provide a procedure to calculate the regularity of powers of facet ideals of simplicial trees. We
compute the regularity of powers of $t$-path ideals of some rooted trees. It is known from \cite{he_thesis} that  the simplicial complex whose facets are paths of length $t$ of a rooted forest is a simplicial forest.

\begin{notation}\label{not-1}
Let $\Delta$ be a simplicial forest on the vertex set $[n]$. Let $F_1,\ldots,F_r$ be the facets of $\Delta$.  Without loss of generality, assume that $F_1,\ldots,F_r$ is a good leaf ordering on the facets of $\Delta.$ For $1 \leq i \leq r-1,$ we set $\Delta_i=\langle F_1,\ldots,F_i  \rangle$ and $J_i=\langle m_{i+1},\ldots,m_r  \rangle.$
\end{notation}
In the following lemma, we compute certain colon ideals which we use to prove the main result of this section.
\begin{lemma}\label{lem1}
Let $\Delta$ be a simplicial forest as in  Notation \ref{not-1}. Then, for all $s \geq 1$, \begin{enumerate}
\item $I(\Delta)^{s+1} : m_r = I(\Delta)^s.$

\item $\left( I(\Delta_i)^{s+1}+J_i \right):m_i= I(\Delta_i)^s+(J_i:m_i)$ for all $1 \leq i \leq r-1$.

\item $\left( I(\Delta_1)^{s+1}+J_1 \right)+(m_1)= I(\Delta)$.

\item $\left( I(\Delta_i)^{s+1}+J_i \right)+(m_i)= I(\Delta_{i-1})^{s+1}+J_{i-1}$ for all $2 \leq i \leq r-1$.
\end{enumerate}
\end{lemma}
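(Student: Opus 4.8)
The four identities are of two flavors: (3) and (4) are direct manipulations of monomial ideals, while the substance lies in (1), and (2) follows from (1) together with elementary bookkeeping for sums of monomial ideals. The plan is therefore to prove (1) first, then deduce (2), and finally dispatch (3) and (4).

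For (1), the inclusion $I(\Delta)^s \subseteq I(\Delta)^{s+1}:m_r$ is immediate since $m_r\,I(\Delta)^s \subseteq I(\Delta)^{s+1}$. For the reverse inclusion I would take a monomial $u$ with $u m_r \in I(\Delta)^{s+1}$ and show $u \in I(\Delta)^s$. As $I(\Delta)^{s+1}$ is a monomial ideal, $u m_r$ is divisible by a product $N=m_{j_1}\cdots m_{j_{s+1}}$. If some $j_k=r$, then cancelling one factor of $m_r$ shows directly that $u$ is divisible by the remaining $s$ generators, so $u\in I(\Delta)^s$. The essential case is when $j_k\neq r$ for all $k$. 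Here I would invoke that $F_r$ is a good leaf of $\Delta$ (the ordering of Notation \ref{not-1} is a good leaf ordering), so the sets $\{F_{j_k}\cap F_r\}$ are totally ordered by inclusion; I choose $k_0$ with $F_{j_k}\cap F_r\subseteq F_{j_{k_0}}\cap F_r$ for all $k$, and claim that $P=\prod_{k\neq k_0}m_{j_k}\in I(\Delta)^s$ already divides $u$.

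To verify the claim, set $c_v=\#\{k:v\in F_{j_k}\}$ for each vertex $v$, so that $\deg_{x_v}N=c_v$ and $\deg_{x_v}P=c_v-[v\in F_{j_{k_0}}]$, where $[\cdot]$ is the Iverson bracket. Since $\mathrm{lcm}(N,m_r)$ divides $u m_r$, the monomial $u$ is divisible by $\mathrm{lcm}(N,m_r)/m_r$, whose $x_v$-exponent is $c_v$ when $v\notin F_r$ and $\max(c_v-1,0)$ when $v\in F_r$ (using that each $m_i$ is squarefree). Comparing with $\deg_{x_v}P$, the only delicate vertices are those $v\in F_r\setminus F_{j_{k_0}}$ with $c_v>0$; but for such a $v$ there is some $k$ with $v\in F_{j_k}\cap F_r\subseteq F_{j_{k_0}}\cap F_r$, forcing $v\in F_{j_{k_0}}$, a contradiction. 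Hence $c_v=0$ on $F_r\setminus F_{j_{k_0}}$, so $P$ divides $u$ and $u\in I(\Delta)^s$, proving (1).

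For (2), both sides are monomial ideals, so it suffices to compare generators. The inclusion $\supseteq$ follows from $m_i\,I(\Delta_i)^s\subseteq I(\Delta_i)^{s+1}$ and $m_i(J_i:m_i)\subseteq J_i$. For $\subseteq$, a monomial $u$ with $u m_i\in I(\Delta_i)^{s+1}+J_i$ has $u m_i$ divisible either by some $m_j$ with $j>i$, whence $u\in J_i:m_i$, or by a product of $s+1$ generators of $I(\Delta_i)$; in the latter case $F_i$ is a good leaf of $\Delta_i$, so part (1) applied to $\Delta_i$ (with top facet $F_i$) gives $u\in I(\Delta_i)^s$. Finally (3) and (4) are immediate: since $I(\Delta_1)=(m_1)$ we get $I(\Delta_1)^{s+1}+J_1+(m_1)=(m_1,\dots,m_r)=I(\Delta)$; and for (4), adjoining $(m_i)$ to $I(\Delta_i)^{s+1}=(m_1,\dots,m_i)^{s+1}$ absorbs every generator involving $m_i$, leaving $I(\Delta_{i-1})^{s+1}+(m_i)$, so $I(\Delta_i)^{s+1}+J_i+(m_i)=I(\Delta_{i-1})^{s+1}+(m_i)+J_i=I(\Delta_{i-1})^{s+1}+J_{i-1}$. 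The main obstacle is the exponent bookkeeping in (1): one must deploy the good leaf property at exactly the right point, to guarantee that the vertices of $F_r$ outside the chosen facet $F_{j_{k_0}}$ appear in none of the generators composing $N$; everything else is routine.
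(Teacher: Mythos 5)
Your proposal is correct, and its overall skeleton (establish the colon identity (1), reuse it for (2), then do the routine ideal manipulations for (3) and (4)) matches the paper's proof exactly. The one genuine difference is at the crucial step: for the identity $I(\Delta)^{s+1}:m_r=I(\Delta)^s$ (and its reappearance as $I(\Delta_i)^{s+1}:m_i=I(\Delta_i)^s$ inside (2)), the paper gives no argument at all — it simply invokes the proof of \cite[Theorem 5.1]{GHJMNN19}, noting that $F_r$ (resp.\ $F_i$) is a good leaf — whereas you supply a complete, self-contained combinatorial proof. Your argument is sound: in the essential case where the product $N=m_{j_1}\cdots m_{j_{s+1}}$ dividing $um_r$ involves no factor $m_r$, the good-leaf property totally orders the intersections $F_{j_k}\cap F_r$, and choosing $k_0$ maximal forces $c_v=0$ for every $v\in F_r\setminus F_{j_{k_0}}$, which is precisely what makes the lcm/exponent comparison go through and shows $\prod_{k\neq k_0}m_{j_k}$ divides $u$. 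Your treatment of (2) — splitting monomial membership in the sum $I(\Delta_i)^{s+1}+J_i$ rather than quoting the distribution of colon over sums of monomial ideals — is the same argument in slightly different clothing, and (3), (4) coincide with the paper's computations (absorbing all terms $m_i^jI(\Delta_{i-1})^{s+1-j}$ with $j\geq 1$ into $(m_i)$, then $(m_i)+J_i=J_{i-1}$). What your route buys is independence from the external reference; what the paper's buys is brevity. Your version also makes visible exactly where the good-leaf hypothesis is used, which the citation hides.
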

\begin{proof} 
\begin{enumerate}[1)]
\item Since $F_r$ is a good leaf of $\Delta$, it follows from the proof of \cite[Theorem 5.1]{GHJMNN19} that $I(\Delta)^{s+1} : m_r = I(\Delta)^s.$
\item  Fix $ 1 \leq i \leq r-1$. Note that $\left( I(\Delta_i)^{s+1}+J_i \right):m_i= I(\Delta_i)^{s+1}:m_i+(J_i:m_i)$. Since $F_1,\ldots, F_r$ is a good leaf order on the facets of $\Delta$, $F_i$ is a good leaf of the subcomplex with facets $F_1,\ldots,F_i.$ Therefore, it follows from  the proof of \cite[Theorem 5.1]{GHJMNN19} that $I(\Delta_i)^{s+1} : m_i = I(\Delta_i)^s.$ Thus, $\left( I(\Delta_i)^{s+1}+J_i \right):m_i= I(\Delta_i)^s+(J_i:m_i)$ for all $1 \leq i \leq r-1$.

\item The facts that $J_1+(m_1)=I(\Delta)$ and $I(\Delta_1)=(m_1)$ give us the desired result.

\item Fix $2 \leq i \leq r-1$. Consider
\begin{eqnarray*}
\left( I(\Delta_i)^{s+1}+J_i \right)+(m_i)&=& \left(\left( I(\Delta_{i-1})+m_i\right)^{s+1}+J_i \right)+(m_i)  \\
&=& \sum\limits_{j=0}^{s+1}m_i^jI(\Delta_{i-1})^{s+1-j}+J_i+(m_i) \\
&=& I(\Delta_{i-1})^{s+1}+J_i+(m_i) \\
&=& I(\Delta_{i-1})^{s+1}+J_{i-1}.
\end{eqnarray*} Hence, the assertion follows.
\end{enumerate}
\end{proof}
\begin{theorem} \label{simplicial}
Let $\Delta$ be a simplicial forest on the  vertex set $[n]$ as in Notation \ref{not-1}, and let $d_i=\deg(m_i)$ for $1 \leq i \leq r$. Then for all $s \geq 1$,
\begin{align*}
&\reg{\left(\frac{R}{I(\Delta)^{s+1}}\right)}\leq\\&  \max \left\{d_r+\reg{\left(\frac{R}{I(\Delta)^{s}}\right)},\max\limits_{1 \leq i \leq r-1} \left\{ d_i+ \reg{\left(\frac{R}{I(\Delta_i)^{s}+(J_i:m_i)}\right)}\right\}, \reg\left(\frac{R}{I(\Delta)}\right) \right\}.
\end{align*}
\end{theorem}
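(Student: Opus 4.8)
The plan is to deduce this bound formally from Lemma \ref{lem1} by peeling off one facet at a time through short exact sequences, using repeatedly the elementary fact that for a short exact sequence $0 \to A \to B \to C \to 0$ of finitely generated graded $R$-modules one has $\reg(B) \le \max\{\reg(A),\reg(C)\}$, together with $\reg(M(-a)) = \reg(M)+a$. To organize the peeling, I would write $K_i := I(\Delta_i)^{s+1}+J_i$ for $1 \le i \le r-1$, so that the right-hand terms of the sequences below form the telescoping chain $K_{r-1}, K_{r-2},\ldots,K_1$.

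First I would handle the top level by multiplying $R/I(\Delta)^{s+1}$ by the good-leaf generator $m_r$, i.e.\ consider
$$0 \to \frac{R}{I(\Delta)^{s+1}:m_r}(-d_r) \xrightarrow{\cdot m_r} \frac{R}{I(\Delta)^{s+1}} \to \frac{R}{I(\Delta)^{s+1}+(m_r)} \to 0.$$
By Lemma \ref{lem1}(1) the left-hand term is $\frac{R}{I(\Delta)^s}(-d_r)$, whose regularity is $d_r+\reg(R/I(\Delta)^s)$. For the right-hand term, writing $I(\Delta)=I(\Delta_{r-1})+(m_r)$ and expanding the power, the same computation as in Lemma \ref{lem1}(4) (now at the top index, using $J_{r-1}=(m_r)$) gives $I(\Delta)^{s+1}+(m_r)=I(\Delta_{r-1})^{s+1}+(m_r)=K_{r-1}$. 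The regularity inequality then yields $\reg(R/I(\Delta)^{s+1}) \le \max\{d_r+\reg(R/I(\Delta)^s),\ \reg(R/K_{r-1})\}$, so it remains to bound $\reg(R/K_{r-1})$.

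Next I would bound $\reg(R/K_i)$ by iterating down the chain. For $2 \le i \le r-1$, multiplying $R/K_i$ by $m_i$ gives
$$0 \to \frac{R}{K_i:m_i}(-d_i) \to \frac{R}{K_i} \to \frac{R}{K_i+(m_i)} \to 0,$$
whose left term is $\frac{R}{I(\Delta_i)^s+(J_i:m_i)}(-d_i)$ by Lemma \ref{lem1}(2) and whose right term is $R/K_{i-1}$ by Lemma \ref{lem1}(4). The base case $i=1$ is identical, except that Lemma \ref{lem1}(3) identifies the right term as $R/I(\Delta)$. Applying the regularity inequality successively along $K_{r-1},K_{r-2},\ldots,K_1,I(\Delta)$ and collecting the maxima produces
$$\reg\!\left(\frac{R}{K_{r-1}}\right) \le \max\left\{\max_{1 \le i \le r-1}\Big\{d_i+\reg\Big(\frac{R}{I(\Delta_i)^s+(J_i:m_i)}\Big)\Big\},\ \reg\Big(\frac{R}{I(\Delta)}\Big)\right\},$$
which combined with the top-level estimate gives exactly the asserted bound.

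The conceptual content has already been carried out in Lemma \ref{lem1}; what remains is essentially bookkeeping. The only point demanding care is tracking the degree shifts $(-d_i)$ correctly, so that each colon contributes $+d_i$ to the regularity, and verifying the top identity $I(\Delta)^{s+1}+(m_r)=K_{r-1}$, which is the $i=r$ analogue of Lemma \ref{lem1}(4). I do not expect a genuine obstacle here; the main risk is an off-by-one slip in the telescoping of the maxima, which the ordered iteration over $i$ down the chain is designed to avoid.
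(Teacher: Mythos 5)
Your proposal is correct and follows essentially the same route as the paper: the same chain of short exact sequences obtained by multiplying by $m_r, m_{r-1},\ldots,m_1$, with kernels and cokernels identified via Lemma \ref{lem1}, followed by the standard regularity inequality applied down the telescoping chain. Your explicit verification that $I(\Delta)^{s+1}+(m_r)=I(\Delta_{r-1})^{s+1}+J_{r-1}$ (the $i=r$ analogue of Lemma \ref{lem1}(4), using $J_{r-1}=(m_r)$) is a point the paper leaves implicit, but it is the identical computation.
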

\begin{proof} Using Lemma \ref{lem1}, we get the following short exact sequences: 
\[
0 \rightarrow \frac{R}{I(\Delta)^{s}}(-d_r) \xrightarrow{\cdot m_r} \frac{R}{I(\Delta)^{s+1}} \rightarrow   \frac{R}{I(\Delta_{r-1})^{s+1}+J_{r-1}} \rightarrow 0,
\] for $2 \leq i \leq r-1$
\[
0 \rightarrow  \frac{R}{I(\Delta_{i})^{s}+(J_{i}:m_{i})}(-d_{i}) \xrightarrow{\cdot m_i} \frac{R}{I(\Delta_{i})^{s+1}+J_{i}} \rightarrow \frac{R}{I(\Delta_{i-1})^{s+1}+J_{i-1}} \rightarrow 0,
\] and \[
0 \rightarrow  \frac{R}{ I(\Delta_1)^{s}+(J_1 :m_1)}(-d_{1}) \xrightarrow{\cdot m_1} \frac{R}{I(\Delta_{1})^{s+1}+J_1} \rightarrow \frac{R}{I(\Delta)} \rightarrow 0.
\]
Now using \cite[Lemma 1.2]{HTT}, we get 
\begin{equation}
\reg{\left(\frac{R}{I(\Delta)^{s+1}}\right)} \leq \max \left\{d_r+\reg{\left(\frac{R}{I(\Delta)^{s}}\right)}, \reg{\left(\frac{R}{I(\Delta_{r-1})^{s+1}+J_{r-1}}\right)} \right\},
\end{equation} for $2 \leq i \leq r-1$
\begin{equation}
\reg{\left(\frac{R}{I(\Delta_{i})^{s+1}+J_{i}}\right)} \leq \max \left\{d_{i}+\reg{\left(\frac{R}{I(\Delta_{i})^{s}+(J_{i}:m_{i})}\right)}, \reg{\left(\frac{R}{I(\Delta_{i-1})^{s+1}+J_{i-1}} \right)} \right\}
\end{equation} and \begin{equation}
\reg{\left(\frac{R}{I(\Delta_{1})^{s+1}+J_{1}}\right)} \leq \max \left\{d_{1}+\reg{\left(\frac{R}{I(\Delta_{1})^{s}+(J_{1}:m_{1})}\right)},\reg\left(\frac{R}{I(\Delta)}\right) \right\}.
\end{equation}
Combining Equations $(1),(2)$ and $(3)$, we obtain the desired result.
\end{proof}

We use the above result to obtain the regularity of powers of some special class of simplicial trees.

\begin{definition}
A broom graph of height $h$ is a rooted tree $(\Gamma,x_0)$ that consists of a handle, which is a directed path $x_{0},x_{1},\ldots,x_{h}$ such that every other vertex (not on the handle) is a leaf vertex.
\end{definition}
Now we calculate the regularity of $\reg\left(\frac{R}{I_t(\Gamma)}\right)$ when $\Gamma$ is a broom graph.
\begin{theorem}\label{bg}
Let $(\Gamma,x_0)$ be a broom graph with $\hgt(\Gamma)=h \geq t-1$ with handle $x_0,x_1,\ldots,x_{h}$. Then $$\reg\left(\frac{R}{I_t(\Gamma)}\right)=(t-1)\left\lceil \frac{h-t+2}{t+1}\right\rceil.$$
\end{theorem}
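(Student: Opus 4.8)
The plan is to prove the two inequalities
$$\reg\left(\frac{R}{I_t(\Gamma)}\right)\ \geq\ (t-1)\left\lceil \frac{h-t+2}{t+1}\right\rceil \quad\text{and}\quad \reg\left(\frac{R}{I_t(\Gamma)}\right)\ \leq\ (t-1)\left\lceil \frac{h-t+2}{t+1}\right\rceil$$
separately: the lower bound is immediate from the handle, while the upper bound follows by induction on $h$ through the procedure of Lemma \ref{lem2}. The arithmetic engine of the whole argument is the elementary identity
$$\left\lceil \frac{h-2t+1}{t+1}\right\rceil + 1 = \left\lceil \frac{h-t+2}{t+1}\right\rceil,$$
combined with the structural fact that in a broom every vertex off the handle is a single leaf, so that the branch forests $\Delta_j^{\Gamma}(z)$ appearing in $\alpha(\Gamma)$ are unions of isolated vertices and contribute nothing to the regularity.

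For the lower bound I would observe that the handle $x_0,x_1,\dots,x_h$ spans an induced subtree of $\Gamma$ which, as a directed graph, is exactly the directed path $P$ on $h+1$ vertices. By \cite[Corollary 5.4]{BRT} one has $\reg\!\left(R/I_t(P)\right)=(t-1)\left\lceil\frac{(h+1)-t+1}{t+1}\right\rceil=(t-1)\left\lceil\frac{h-t+2}{t+1}\right\rceil$, and since $P$ is an induced rooted subforest, Lemma \ref{lemm1} yields $\reg(R/I_t(\Gamma))\geq\reg(R/I_t(P))$, which is the claimed bound.

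For the upper bound I would induct on $h$. The base case $h=t-1$ is handled directly: every generator of $I_t(\Gamma)$ is a path running from $x_0$ to a level-$(t-1)$ leaf, so $I_t(\Gamma)=x_0x_1\cdots x_{t-2}\cdot\bigl(x_{t-1},\ell_1,\dots,\ell_m\bigr)$, where the $\ell_i$ are the leaves attached at $x_{t-2}$; this ideal has a linear resolution, giving $\reg(R/I_t(\Gamma))=t-1=(t-1)\lceil 1/(t+1)\rceil$. For $h\geq t$ I apply Lemma \ref{lem2}. First, $\Gamma'=\Gamma\setminus\{w:\ell_{\Gamma}(w)=h\}$ is again a broom, now of height $h-1$, so by the inductive hypothesis $\reg(R/I_t(\Gamma'))=(t-1)\lceil\frac{h-t+1}{t+1}\rceil\leq(t-1)\lceil\frac{h-t+2}{t+1}\rceil$. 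Next, for any leaf $z$ with $\ell_{\Gamma}(z)=h$ and any $0\leq j\leq t-1$, the set $\Delta_j^{\Gamma}(z)$ consists precisely of the leaves attached to the handle vertex $x_j(z)$, hence is a disjoint union of isolated vertices; thus $I_{t-j}(\Delta_j^{\Gamma}(z))=(0)$ for $j\leq t-2$ and $I_1(\Delta_{t-1}^{\Gamma}(z))$ is generated by variables, so $\sum_{j=0}^{t-1}\reg\!\left(R/I_{t-j}(\Delta_j^{\Gamma}(z))\right)=0$. Finally $\Gamma(z)=\Gamma\setminus\Gamma_0(z)$ is the initial segment of the broom, a broom of height $h-t-1$, and the inductive hypothesis (or the vanishing $I_t(\Gamma(z))=(0)$ when $t\leq h\leq 2t-1$) gives $\reg(R/I_t(\Gamma(z)))\leq(t-1)\lceil\frac{h-2t+1}{t+1}\rceil$. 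Hence
$$\alpha(\Gamma)\ \leq\ (t-1)\left\lceil \frac{h-2t+1}{t+1}\right\rceil+(t-1)\ =\ (t-1)\left\lceil \frac{h-t+2}{t+1}\right\rceil$$
by the identity above, and Lemma \ref{lem2} delivers the upper bound.

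The main obstacle is the bookkeeping inside $\alpha(\Gamma)$: one must check, for every top-level leaf $z$, that the branch forests $\Delta_j^{\Gamma}(z)$ really collapse to isolated vertices (this is exactly where the broom hypothesis is indispensable) and that both $\Gamma'$ and $\Gamma(z)$ remain brooms of strictly smaller height so the induction closes. A secondary nuisance is the degenerate range $t-1\leq h<2t$, where $\Gamma(z)$ has height below $t-1$ and $I_t(\Gamma(z))$ vanishes; here one verifies that $\reg=0$ matches $(t-1)\lceil\frac{h-2t+1}{t+1}\rceil$, which the ceiling identity reconciles automatically with the target formula.
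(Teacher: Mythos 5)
Your proposal is correct and takes essentially the same route as the paper's own proof: the lower bound is obtained identically (the handle is an induced rooted path, so \cite[Corollary 5.4]{BRT} together with Lemma \ref{lemm1} applies), and the upper bound runs the same Bouchat--H\`a--O'Keefe deletion recursion with the same ceiling identity $\left\lceil \frac{h-2t+1}{t+1}\right\rceil+1=\left\lceil \frac{h-t+2}{t+1}\right\rceil$, the only cosmetic differences being that you induct on the height $h$ and invoke Lemma \ref{lem2}, whereas the paper inducts on the number of vertices and applies Lemma \ref{lemma} to a single top-level leaf. If anything, your treatment of the branch forests is more careful than the paper's: the paper asserts that the $\Delta_j^{\Gamma}(z)$ are empty, while in general (bristles attached along the upper part of the handle) they are disjoint unions of isolated vertices, and your observation that such forests contribute zero regularity --- since $I_{t-j}$ of isolated vertices vanishes for $j\le t-2$ and $I_1$ is variable-generated --- is precisely the justification needed for the recursion to collapse as claimed.
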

\begin{proof}
First we prove that $\reg\left(\frac{R}{I_t(\Gamma)}\right) \leq (t-1)\left\lceil \frac{h-t+2}{t+1}\right\rceil.$ We make induction on the number of vertices of $\Gamma$. Let $n=|V(\Gamma)|$.  If $n=1$, then $t=1$, and hence, the result is obvious. Assume that $n>1$ and the result holds for all broom graphs with the number of vertices less than $n$. Let $z$ be a leaf of $\Gamma$ such that $\ell_{\Gamma}(z) =\hgt(\Gamma).$ Following  Notation \ref{nota}, we  get $ \Delta_j^{\Gamma}(z) $'s are empty for each $0 \leq j \leq t-1$. Therefore, by Lemma \ref{lemma}, \begin{equation}\label{equation}
    \reg\left(\frac{R}{I_t(\Gamma)}  \right) = \max \left\{\reg\left(\frac{R}{I_t(\Gamma \setminus \{z\})}\right),\reg\left(\frac{R}{I_t(\Gamma(z))}\right)  +t-1\right\}. 
\end{equation}
Using induction, we get  \begin{align*}
    \reg\left(\frac{R}{I_t(\Gamma \setminus \{z\})}\right) & \leq
    \begin{cases}
     (t-1)\left\lceil \frac{h-t+2 }{t+1}\right\rceil & \text{if $\hgt(\Gamma \setminus \{z\})=h$}\\
  (t-1)\left\lceil \frac{h-t+1 }{t+1}\right\rceil  & \text{if $\hgt(\Gamma \setminus \{z\})=h-1$.}
    \end{cases} \\& \leq (t-1)\left\lceil \frac{h-t+2 }{t+1}\right\rceil.
\end{align*}  Thus in the view of Equation (\ref{equation}), it is enough to show that $\reg\left(\frac{R}{I_t(\Gamma(z))}\right)  +(t-1) \leq (t-1)\left\lceil \frac{h-t+2 }{t+1}\right\rceil$.
Now, suppose that $t=h$ or $h+1$. Then $\Gamma(z)=\emptyset$, and therefore,  $$\reg\left(\frac{R}{I_t(\Gamma(z))}\right)  +(t-1)=(t-1)=(t-1)\left\lceil \frac{h-t+2 }{t+1}\right\rceil.$$ 
Next, suppose that $t < h$. In this case, $\Gamma(z)$ is a nonempty broom graph with $\hgt(\Gamma(z))=h-t-1$. Again, using induction, we get
\begin{eqnarray*}
\reg\left(\frac{R}{I_t(\Gamma(z))}\right)  +(t-1)& \leq & (t-1)\left\lceil \frac{h-2t+1 }{t+1}\right\rceil +(t-1)\\
&=& (t-1)\left\lceil \frac{h-2t+1 }{t+1}+1\right\rceil \\
&=& (t-1)\left\lceil \frac{h-t+2 }{t+1}\right\rceil.
\end{eqnarray*} Thus, the regularity upper bound follows. 

Next, let $P_h$ be the induced rooted path of $\Gamma$  on the vertex set $\{x_0,x_1,\ldots,x_h\}$. By Lemma \ref{lemm1},  $\reg\left(\frac{R}{I_t(P_{h})}\right) \leq \reg\left(\frac{R}{I_t(\Gamma)}\right).$ Hence, the assertion follows from \cite[Corollary 5.4]{BRT}.
\end{proof}

Now, we compute the regularity of powers of $t$-path ideal of broom graphs. For that purpose, we set the following notation.

\begin{notation}\label{not-2}
Let $\Gamma$ be a broom graph of $\hgt(\Gamma)=h$ and $t \geq 2$.  For $0 \leq i < h$, let $l_i$ denote the number of leaves in $\Gamma$ at the level $i$ and $l_h$ denote the number of leaves at the level $h$ minus one. Assume that $l_i = 0 $ if $i<t-1$. Then, for $0 \leq i \leq h$, the number of vertices in $\Gamma$ at level $i$ is $l_i+1$. For $0 \leq i \leq h$, let $V_i(\Gamma)=\{ x_{(i,0)},x_{(i,1)},\ldots, x_{(i,l_i)}\}$  be the set of  vertices of $\Gamma$ that belongs to level $i$. Assume, without loss of generality, that $x_{(0,0)},\ldots,x_{(h,0)}$ are the vertices of the handle. Now, for $0 \leq i \leq h-t+1$ and $0 \leq j \leq l_i$, set $F_{(i,j)} =\{x_{(i,0)},x_{(i+1,0)},\ldots,x_{(i+t-2,0)}, x_{(i+t-1,j)}\}$ and $m_{(i,j)} =\prod_{x\in F_{(i,j)}} x$. Let $\Delta$  denote the simplicial complex whose facets are $\{F_{(i,j)} \; : \; 0 \leq i \leq h-t+1, 0 \leq j \leq l_i\}$. Then, the facet ideal $I(\Delta)$ of $\Delta$ is the $t$-path ideal of $\Gamma.$ Our aim is to find a good leaf ordering on the facets of $\Delta$. Let $A= \{ (i,j) \; : \; 0 \leq i \leq h-t+1, 0 \leq j \leq l_i\}.$ For $(i,j), (k,l) \in A$, we declare $(i,j) <  (k,l)$ if either $i< k$ or if $i=k$, then $j> l.$ We claim that the ordering on the facets of $\Delta$ induced by the order on $A$ is a good leaf ordering.  For $(i,j) \in A$, let $\Delta_{(i,j)}$ denote the simplicial complex whose facets are $\left\{ F_{(k,l)} \; : \; (k,l)< (i,j) \right\}\cup \{F_{(i,j)}\}.$ Then, by \cite{HHTZ}, it is enough to prove that $F_{(i,j)}$ is a good leaf of $\Delta_{(i,j)}.$ Let $(k,l) \in A$ be such that $(k,l) < (i,j)$. If $k=i$, then $j < l$ and therefore, $F_{(i,j)} \cap F_{(i,l)} =\{x_{(i,0)},x_{(i+1,0)},\ldots,x_{(i+t-2,0)}\}$. Suppose that $k \neq i$, then $k <i$. Now, we have following cases: when $l \neq 0$, then $F_{(i,j)} \cap F_{(k,l)} =\emptyset$ if $k \leq i-t+1$, otherwise $F_{(i,j)} \cap F_{(k,l)} =\{x_{(i,0)},x_{(i+1,0)},\ldots,x_{(k+t-2,0)}\} $. When $l=0$, then  $F_{(i,j)} \cap F_{(k,l)} =\emptyset$ if $k < i-t+1$, otherwise   $F_{(i,j)} \cap F_{(k,l)} =\{x_{(i,0)},x_{(i+1,0)},\ldots,x_{(k+t-1,0)}\} $. Consequently, the collection $ \{ F_{(i,j)} \cap F_{(k,l)} \; : \; (k,l) < (i,j) \}$ is a totally ordered set with respect to inclusion, and hence, $F_{(i,j)}$ is a good leaf of $\Delta_{(i,j)}.$  
\end{notation}

In the next result, we use Notation \ref{not-2}  and Theorem \ref{simplicial} to calculate the regularity of $t$-path ideals of broom graphs.

\begin{theorem}\label{broommain}
Let $(\Gamma, x_0)$ be a broom graph of $\hgt(\Gamma)=h$ and $2 \leq t \leq h+1$. Then for all $s \geq 1,$ $$\reg{\left(\frac{R}{I_t(\Gamma)^s}\right)} = t(s-1)+\reg\left(\frac{R}{I_t(\Gamma)}\right).$$ 
\end{theorem}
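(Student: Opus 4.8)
The plan is to prove the two inequalities separately: the lower bound $\reg(R/I_t(\Gamma)^s)\ge t(s-1)+\reg(R/I_t(\Gamma))$ by restricting to a complete intersection coming from disjoint paths, and the matching upper bound by feeding the good leaf ordering of Notation \ref{not-2} into the recursion of Theorem \ref{simplicial} and inducting on $s$. Throughout one uses that every facet of the $t$-path complex $\Delta$ of $\Gamma$ has exactly $t$ vertices, so the degrees $d_i$ appearing in Theorem \ref{simplicial} are all equal to $t$.

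For the lower bound, write $k=\left\lceil\frac{h-t+2}{t+1}\right\rceil$, so that $\reg(R/I_t(\Gamma))=(t-1)k$ by Theorem \ref{bg}. I would select $k$ pairwise disjoint directed paths on $t$ vertices along the handle, spaced far enough apart that the subgraph of $\Gamma$ induced on their vertices is exactly their disjoint union $\Gamma''$; these are the same paths that witness the $s=1$ regularity. Then $I_t(\Gamma'')=(g_1,\dots,g_k)$, where $g_1,\dots,g_k$ are degree-$t$ monomials in pairwise disjoint variable sets, so $\mathfrak c:=I_t(\Gamma'')$ is a complete intersection and the facets of $\Gamma''$ form an induced subcomplex of $\Delta$. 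Invoking the monotonicity of the (graded) Betti numbers of powers under passage to an induced subcomplex --- the power analogue of Lemma \ref{lemm1}, of the same flavour as the comparison \cite[Theorem 3.6]{BCDMS} already used in Theorem \ref{linear} --- yields $\reg(R/I_t(\Gamma)^s)\ge\reg(R/\mathfrak c^{\,s})$. Finally, a standard computation of the regularity of powers of a complete intersection of $k$ forms of degree $t$ gives $\reg(R/\mathfrak c^{\,s})=t(s-1)+k(t-1)=t(s-1)+\reg(R/I_t(\Gamma))$, which closes this direction. (Verifying the c.i. formula, e.g.\ by induction on $k$ via the Koszul relations, is routine: for $k=1$ it reads $\reg(R/(g^s))=ts-1$, for $k=2$ it reads $(s+1)t-2$, and the pattern persists.)

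For the upper bound I would induct on $s$, the base case $s=1$ being Theorem \ref{bg}. In the inductive step, since $d_i=t$ for every facet, Theorem \ref{simplicial} gives
\[\reg\left(\frac{R}{I_t(\Gamma)^{s+1}}\right)\le \max\left\{t+\reg\left(\frac{R}{I_t(\Gamma)^{s}}\right),\ \max_{1\le i\le r-1}\left(t+\reg\left(\frac{R}{I(\Delta_i)^{s}+(J_i:m_i)}\right)\right),\ \reg\left(\frac{R}{I_t(\Gamma)}\right)\right\}.\]
By the inductive hypothesis the first term equals $ts+\reg(R/I_t(\Gamma))$, which is exactly the target value, and the third term is visibly dominated by it. Hence it suffices to show that each middle term satisfies $\reg(R/(I(\Delta_i)^{s}+(J_i:m_i)))\le t(s-1)+\reg(R/I_t(\Gamma))$.

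The crux --- and what I expect to be the main obstacle --- is controlling these colon ideals. Using the explicit intersections $F_{(i,j)}\cap F_{(k,l)}$ recorded in Notation \ref{not-2}, one reads off that $J_i:m_i$ is generated by the monomials $\prod_{x\in F_\ell\setminus F_i}x$ for the later facets $F_\ell$, which involve only handle vertices strictly below $F_i$ together with leaves. I would show that $I(\Delta_i)$ is the $t$-path ideal of a strictly smaller broom, and that adjoining $J_i:m_i$ either introduces variables disjoint from the support of $I(\Delta_i)$ or, via a short exact sequence splitting off these generators, reduces $\reg(R/(I(\Delta_i)^{s}+(J_i:m_i)))$ to the regularity of the $s$-th power of a smaller broom's $t$-path ideal, to which the inductive hypothesis (now also run over the number of facets) applies. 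The delicate point is to verify that this splicing never pushes the regularity above $t(s-1)+\reg(R/I_t(\Gamma))$; this is precisely where the total order on the facet intersections from Notation \ref{not-2} must be used in full, and this combinatorial bookkeeping is the technical heart of the proof, whereas the lower bound and the overall induction are comparatively routine.
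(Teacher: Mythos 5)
Your overall architecture is exactly the paper's: the lower bound via a complete intersection of $k=\left\lceil\frac{h-t+2}{t+1}\right\rceil$ pairwise disjoint induced $t$-vertex paths along the handle (the paper takes $\gamma$ induced on $\bigsqcup_{j=0}^{k-1}F_{(j(t+1),0)}$ and invokes the same circle of restriction/complete-intersection results you cite), and the upper bound by induction on $s$ through Theorem \ref{simplicial} applied to the good leaf order of Notation \ref{not-2}. The parts you actually carry out are correct, including the formula $t(s-1)+k(t-1)$ for powers of a complete intersection of $k$ forms of degree $t$ and the verification that the $k$ paths fit disjointly on the handle.

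However, the step you defer is the actual content of the proof, and your proposal stops at announcing a dichotomy (``either disjoint variables or a short exact sequence splicing'') without establishing either branch or the final comparison; that is a genuine gap. What is needed is the explicit computation of the colon ideals: for $j\neq 0$ one has $J_{(i,j)}:m_{(i,j)}=(x_{(i+t-1,l)}\,:\,l<j)+I_t(\Gamma_{(i+t,0)})$, and for $j=0$ one has $J_{(i,0)}:m_{(i,0)}=(x_{(i+t,l)}\,:\,l\le l_{i+t})+I_t(\Gamma_{(i+t+1,0)})$, where $\Gamma_{(p,0)}$ is the subtree rooted at $x_{(p,0)}$; so the first branch of your dichotomy always holds, and regularity is additive: $\reg\bigl(R/(I(\Delta_{(i,j)})^s+(J_{(i,j)}:m_{(i,j)}))\bigr)=\reg\bigl(R/I(\Delta_{(i,j)})^s\bigr)+\reg\bigl(R/(J_{(i,j)}:m_{(i,j)})\bigr)$. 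But your plan of ``reducing to the $s$-th power of a smaller broom'' only handles the first summand, which induction bounds by $t(s-1)+\reg\bigl(R/I_t(\Gamma_{\le(i,j)})\bigr)$; the second summand equals $\reg\bigl(R/I_t(\Gamma_{(i+t,0)})\bigr)$ and is genuinely nonzero. The missing idea that closes the induction is that $\Gamma_{\le(i,j)}\sqcup\Gamma_{(i+t,0)}$ is an \emph{induced} subforest of $\Gamma$ (the handle vertex $x_{(i+t-1,0)}$ is omitted, severing all edges between the two pieces), so that the sum of the two regularities equals $\reg\bigl(R/I_t(\Gamma_{\le(i,j)}\sqcup\Gamma_{(i+t,0)})\bigr)$, which Lemma \ref{lemm1} bounds by $\reg\bigl(R/I_t(\Gamma)\bigr)$. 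Without this observation the two regularities cannot be compared against $\reg\bigl(R/I_t(\Gamma)\bigr)$, and the middle terms in Theorem \ref{simplicial} are not controlled; with it, the argument is precisely the paper's proof.
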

\begin{proof}
First we claim that $\reg{\left(\frac{R}{I_t(\Gamma)^s}\right)}\leq t(s-1)+\reg\left(\frac{R}{I_t(\Gamma)}\right).$ 
We proceed by induction on $s$. If $s=1$, then the result trivially holds. Assume that the result holds for $s$, i.e., for any broom graph $\Gamma'$ with $\hgt(\Gamma')=h'$ and $2 \leq t \leq h'+1,$
\[
\reg{\left(\frac{R}{I_t(\Gamma')^s}\right)}\leq t(s-1)+\reg{\left(\frac{R}{I_t(\Gamma')}\right)}.
\]
Using Notation \ref{not-2}, we know that $I_t(\Gamma) =I(\Delta)$, where $\Delta$ is a simplicial tree whose facets are $\{F_{(i,j)} \; : \; (i,j) \in A\}$. Also, the ordering on the facets of $\Delta$ induced by the total order on $A$ is a good leaf ordering. We set $J_{(i,j)}=\left( m_{(k,l)} \; : \;  (i,j) < (k,l) \right),$ for $(i,j) \in A$. 
 By Theorem \ref{simplicial}, we get 
\begin{equation}\label{broom}
\reg{\left(\frac{R}{I(\Delta)^{s+1}}\right)} \leq \max \left\{t+\reg{\left(\frac{R}{I(\Delta)^{s}}\right)}, \alpha, \reg\left(\frac{R}{I(\Delta)}\right) \right\},
\end{equation} where $$\alpha =\max\limits_{(i,j) \in A\setminus \{(h-t+1,0)\}} \left\{ t+ \reg{\left(\frac{R}{I(\Delta_{(i,j)})^{s}+(J_{(i,j)}:m_{(i,j)})}\right)}\right\}.$$
Note that $I(\Delta_{(i,j)})=\left(m_{(k,l)} \; : \; (k,l) \le (i,j) \right)$ for all $(i,j) \in A$. It is easy to see that $$J_{(i,j)}:m_{(i,j)} = ( x_{(i+t-1,l)}\;:\; l<j)+ I_t \left( \Gamma_{ {(i+t,0)}} \right)$$ if $ j \neq 0$ and $$J_{(i,0)}:m_{(i,0)} = ( x_{(i+t,l)}\;:\; l\leq l_{i+t})+ I_t \left( \Gamma_{ {(i+t+1,0)}} \right),$$ where $\Gamma_{(i,j)}$ denote the induced rooted subtree of  $\Gamma$ rooted at $x_{(i,j)}.$ 
Consequently, for all $(i,j) \in A$, $I(\Delta_{(i,j)})$ and $J_{(i,j)}:m_{(i,j)}$ are monomial ideals generated in a disjoint set of variables, and therefore, \begin{align*}
  \reg\left(\frac{R}{I(\Delta_{(i,j)})^s+(J_{(i,j)}:m_{(i,j)})}\right)&=\reg{\left(\frac{R}{I(\Delta_{(i,j)})^s}\right)}+\reg{\left(\frac{R}{J_{(i,j)}:m_{(i,j)}}\right)} . 
\end{align*} Let $\Gamma_{ \leq (i,j)}$ denote the induced broom graph of $\Gamma$ such that $I(\Delta_{(i,j)})=I_t(\Gamma_{\leq (i,j)}).$ Therefore, by induction on $s$, we get $\reg{\left(\frac{R}{I(\Delta_{(i,j)})^s}\right)}\leq t(s-1)+\reg{\left(\frac{R}{I_t(\Gamma_{\leq (i,j)})}\right)}$. Note that $\Gamma_{\leq (i,j)} \sqcup \Gamma_{(i+t,0)}$ and $\Gamma_{\leq (i,0)} \sqcup \Gamma_{(i+t+1,0)}$ are disjoint union of induced broom subgraphs of $\Gamma.$   Thus, if $j \neq 0$
\begin{align*}
  \reg\left(\frac{R}{I(\Delta_{(i,j)})^s+(J_{(i,j)}:m_{(i,j)})}\right)&=\reg{\left(\frac{R}{I(\Delta_{(i,j)})^s}\right)}+\reg{\left(\frac{R}{J_{(i,j)}:m_{(i,j)}}\right)} \\ & \leq   t(s-1)+\reg{\left(\frac{R}{I_t(\Gamma_{\leq (i,j)})}\right)}+\reg{\left(\frac{R}{I_t(\Gamma_{(i+t,0)})}\right)}  \\& =t(s-1)+\reg{\left(\frac{R}{I_t(\Gamma_{\leq (i,j)}\sqcup \Gamma_{(i+t,0)})}\right)} \\& \leq  t(s-1)+\reg{\left(\frac{R}{I_t(\Gamma)}\right)},
\end{align*} where the last inequality follows from Lemma \ref{lemm1}. Similarly, 
\begin{align*}
  \reg\left(\frac{R}{I(\Delta_{(i,0)})^s+(J_{(i,0)}:m_{(i,0)})}\right)&=\reg{\left(\frac{R}{I(\Delta_{(i,0)})^s}\right)}+\reg{\left(\frac{R}{J_{(i,0)}:m_{(i,0)}}\right)} \\ & \leq   t(s-1)+\reg{\left(\frac{R}{I_t(\Gamma_{\leq (i,0)})}\right)}+\reg{\left(\frac{R}{I_t(\Gamma_{(i+t+1,0)})}\right)}  \\& =t(s-1)+\reg{\left(\frac{R}{I_t(\Gamma_{\leq (i,0)}\sqcup \Gamma_{(i+t+1,0)})}\right)} \\& \leq  t(s-1)+\reg{\left(\frac{R}{I_t(\Gamma)}\right)}.
\end{align*}
Therefore, \begin{align*}
\alpha =     \max\limits_{(i,j) \in A\setminus \{(h-t+1,0)\}} \left\{ t+ \reg{\left(\frac{R}{I(\Delta_{(i,j)})^{s}+(J_{(i,j)}:m_{(i,j)})}\right)}\right\} \leq ts+\reg{\left(\frac{R}{I_t(\Gamma)}\right)}. 
\end{align*} Also, by applying induction, we have $$\reg{\left(\frac{R}{I(\Delta)^{s}}\right)}  \leq   t(s-1)+\reg{\left(\frac{R}{I_t(\Gamma)}\right)}.$$ Now, using  Equation \eqref{broom}  the claim follows.

Let $\gamma$ be the induced rooted subforest of $\Gamma$ on the vertex set $\sqcup_{k=0}^{\left\lceil \frac{h-t+2}{t+1}\right\rceil-1}F_{(k(t+1),0)}.$ Once could observe that $I_t(\gamma)$ is complete intersection. Therefore, using similar arguments as used in \cite[Corollary 3.8]{BCDMS} and in \cite[Lemma 4.4]{BHT}, it follows that
\begin{eqnarray*}\label{eq2}
\reg{\left(\frac{R}{I_t(\Gamma)^{s}}\right)}\ge  \reg{\left(\frac{R}{I_t(\gamma)^{s}}\right)}  
&=& ts+(t-1)\left(\left\lceil \frac{h-t+2}{t+1}\right\rceil -1\right)-1 \\
&=& t(s-1)+\reg{\left(\frac{R}{I_t(\Gamma)}\right)}, 
\end{eqnarray*} where the last equality follows from Theorem \ref{bg}.  Hence, the assertion follows.
\end{proof}

We now characterize rooted trees such that some power of their $t$-path ideals has a linear resolution. 
\begin{theorem}
Let $(\Gamma,x_0)$ be a rooted tree with  $\hgt(\Gamma) \geq t-1.$ Then  the following are equivalent:
\begin{enumerate}[{$(1)$}]
    \item $C(\Gamma)$ is a broom graph of height at most $2t -1.$
    \item ${I_t(\Gamma)}$ has a linear resolution.
     \item ${I_t(\Gamma)^{s}}$ has linear quotients for all $s.$
    \item ${I_t(\Gamma)^{s}}$ has linear quotients for some $s.$ 
    \item ${I_t(\Gamma)^{s}}$ has a linear resolution for some $s.$
    \item ${I_t(\Gamma)^{s}}$ has linear first syzygies for some $s.$
\end{enumerate}
\end{theorem}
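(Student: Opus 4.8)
The plan is to view $I_t(\Gamma)$ as the facet ideal of the simplicial complex $\Delta$ whose facets are the directed $t$-paths of $\Gamma$; by \cite{he_thesis} this is a simplicial forest, and since every facet has exactly $t$ vertices, $\Delta$ is pure of dimension $t-1$ and $I_t(\Gamma)$ is equigenerated in degree $t$. With this identification the equivalences $(2)\Leftrightarrow(3)\Leftrightarrow(4)\Leftrightarrow(5)\Leftrightarrow(6)$ are immediate from Theorem \ref{linear}, since the purity hypotheses appearing there are automatic here (and a linear resolution of an equigenerated ideal forces $\Delta$ to be connected, so Theorem \ref{linear} applies verbatim). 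Thus it remains only to prove $(1)\Leftrightarrow(2)$. By Remark \ref{clean} we have $I_t(\Gamma)=I_t(C(\Gamma))$, so throughout I may replace $\Gamma$ by its clean form and assume every leaf of $\Gamma$ lies at level $\ge t-1$; note that both $(1)$ and $(2)$ then depend only on this clean tree.

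For $(1)\Rightarrow(2)$ I would simply invoke Theorem \ref{bg}: if $C(\Gamma)$ is a broom of height $h\le 2t-1$, then $\reg\left(R/I_t(\Gamma)\right)=(t-1)\left\lceil \frac{h-t+2}{t+1}\right\rceil=t-1$, because $t-1\le h\le 2t-1$ forces $0<\frac{h-t+2}{t+1}\le 1$ and hence the ceiling to equal $1$. As $I_t(\Gamma)$ is generated in degree $t$, $\reg(I_t(\Gamma))=t$ means it has a linear resolution.

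The substantive direction is $(2)\Rightarrow(1)$, which I would prove by contraposition, using that regularity is monotone under induced subforests (Lemma \ref{lemm1}). First, the path from the root to a deepest leaf is an induced path $P_h$ with $h=\hgt(\Gamma)$, so by \cite[Corollary 5.4]{BRT} one has $\reg\left(R/I_t(P_h)\right)=(t-1)\left\lceil\frac{h-t+2}{t+1}\right\rceil\le \reg\left(R/I_t(\Gamma)\right)$; a linear resolution forces the right-hand side to be $t-1$, so the ceiling is $1$ and $h\le 2t-1$. Second, suppose $\Gamma$ is not a broom. Since a rooted tree is a broom exactly when every vertex has at most one non-leaf child, there is a vertex $u$, say at level $\ell$, having two children each with a descendant; I call the two descending root-to-leaf continuations its branches, each occupying at least two levels below $u$. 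I then set $e=\max\{2,\;t-\ell-1\}$ and let $G$ and $H$ be the two $t$-paths that agree on $u$ together with its top $t-e-1$ ancestors and then descend $e$ steps into the two distinct branches. Cleanliness guarantees each branch reaches level $\ge t-1$, so both $G$ and $H$ are genuine $t$-paths of $\Gamma$; the value of $e$ is exactly the smallest admissible branch length ($\ge 2$) for which the shared top still fits above $u$ without running past the root.

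The key computation is that the induced subforest on the vertex set $G\cup H$ has $G$ and $H$ as its only $t$-paths, so its facet ideal is $(m_G,m_H)$ with $m_G=g\,u_1$ and $m_H=g\,u_2$, where $g=\prod_{x\in G\cap H}x$ has degree $t-e$ and $u_1,u_2$ are coprime monomials of degree $e$ supported on the two disjoint branches. Hence $(m_G,m_H)=g\cdot(u_1,u_2)$ has minimal free resolution $0\to R(-(t+e))\to R(-t)^2\to (m_G,m_H)\to 0$, so $\reg\big(R/(m_G,m_H)\big)=t+e-2\ge t>t-1$. By Lemma \ref{lemm1} this yields $\reg\left(R/I_t(\Gamma)\right)\ge t>t-1$, contradicting the linear resolution; therefore $\Gamma$ is a broom of height $\le 2t-1$, completing $(2)\Rightarrow(1)$. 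I expect the main obstacle to be precisely this non-broom case: one must ensure that $G$ and $H$ are both honest $t$-paths of $\Gamma$, which requires careful bookkeeping of how many vertices of each path lie above versus below the branch vertex $u$ (controlled by the level $\ell$ relative to $t$), and the choice of $e$ above is what makes this argument uniform across all positions of $u$.
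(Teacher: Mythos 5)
Your proposal is correct, but it handles the key equivalence $(1)\Leftrightarrow(2)$ by a genuinely different route than the paper. For the equivalences $(2)$ through $(6)$ you do exactly what the paper does — invoke Theorem \ref{linear} after identifying $I_t(\Gamma)$ with the facet ideal of the path complex, which is a simplicial forest by \cite{he_thesis}; in fact you are more careful than the paper, since you verify that the purity hypotheses in conditions $(3)$ and $(4)$ of Theorem \ref{linear} are automatic here (every facet has $t$ vertices), and that connectivity is not an obstruction (for $\hgt(\Gamma)\ge t-1$ every $t$-path lies on a root-to-leaf path, whose $t$-subpaths chain up to a path through the root, so the complex is a simplicial \emph{tree}). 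The real divergence is at $(1)\Leftrightarrow(2)$: the paper simply cites \cite[Theorem 4.5]{BRT}, whereas you reprove that result from the paper's own toolkit — $(1)\Rightarrow(2)$ via the broom regularity formula of Theorem \ref{bg} (the ceiling equals $1$ exactly when $t-1\le h\le 2t-1$), and $(2)\Rightarrow(1)$ by contraposition using monotonicity of regularity under induced subforests (Lemma \ref{lemm1}): the root-to-deepest-leaf path together with \cite[Corollary 5.4]{BRT} forces $h\le 2t-1$, and for a non-broom you exhibit two $t$-paths $G,H$ through a branching vertex whose induced subforest has facet ideal $g\cdot(u_1,u_2)$ with $u_1,u_2$ coprime of degree $e\ge 2$, giving $\reg\bigl(R/(m_G,m_H)\bigr)=t+e-2\ge t>t-1$. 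Your bookkeeping of the parameter $e=\max\{2,\,t-\ell-1\}$ is sound (one needs $e\ge t-\ell-1$ so the shared top fits below the root, cleanliness to descend $t-\ell-1$ steps, and the non-leaf children to descend $2$ steps); the only blemish is the case $t=2$, where $t-e-1=-1$ and $G,H$ do not literally ``agree on $u$'' — there the shared part is empty, $g=1$, and the same computation with two disjoint edges still yields $\reg\ge 2>t-1$, so nothing breaks. What each approach buys: the paper's citation is shorter and defers to \cite{BRT}; your argument is self-contained within the paper, is uniform with the techniques of Section \ref{section5}, and gives the quantitative strengthening that failure of condition $(1)$ forces $\reg\bigl(R/I_t(\Gamma)\bigr)\ge t$ rather than merely $>t-1$.
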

\begin{proof}
The equivalence of $(1)$ and $(2)$ follows from \cite[Theorem 4.5]{BRT}. The rest follows from Theorem  \ref{linear}. Hence, the assertion follows. 
\end{proof}

Next, we compute the regularity of powers of $(\hgt(\Gamma)+1)$-path ideals of perfect rooted trees.
\begin{theorem}\label{perfectr}
Let $(\Gamma,x_0)$ be a perfect rooted tree with $\hgt(\Gamma) \geq 1$, and let $t=\hgt(\Gamma)+1$.  Then,  for all $s \geq 1$, $$\reg{\left(\frac{R}{I_t(\Gamma)^s}\right)} = t(s-1)+1+\sum_{\ell_{\Gamma}(x)=0}^{\hgt(\Gamma)-2} \deg_{\Gamma}^+(x) =t(s-1)+\reg\left(\frac{R}{I_t(\Gamma)}\right).$$
\end{theorem}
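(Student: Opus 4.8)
The plan is to strip off the root and reduce to a disjoint union of smaller perfect trees, then induct on $\hgt(\Gamma)$. The crucial structural observation is that when $t=\hgt(\Gamma)+1$ and $\Gamma$ is perfect, every directed path on $t$ vertices occupies the consecutive levels $0,1,\ldots,\hgt(\Gamma)$ (each edge descends exactly one level), hence starts at the unique root $x_0$ and ends at a leaf. Consequently $x_0$ divides every generator of $I_t(\Gamma)$, so $I_t(\Gamma)=x_0\cdot(I_t(\Gamma):x_0)=x_0\,I_{t-1}(\Gamma\setminus\{x_0\})$ and therefore $I_t(\Gamma)^s=x_0^s\,I_{t-1}(\Gamma\setminus\{x_0\})^s$. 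Writing $S$ for the polynomial ring on the variables other than $x_0$ and $K=I_{t-1}(\Gamma\setminus\{x_0\})\subseteq S$, multiplication by $x_0^s$ identifies $I_t(\Gamma)^s$ with $(K^sR)(-s)$; since adjoining the variable $x_0$ leaves Betti numbers unchanged (flat base change $R=S[x_0]$), this yields the reduction
\[
\reg\left(\frac{R}{I_t(\Gamma)^s}\right)=\reg_S\!\left(\frac{S}{K^s}\right)+s.
\]

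First I would analyze $K$. If $x_{i_1},\ldots,x_{i_k}$ are the children of $x_0$, then $\Gamma\setminus\{x_0\}=\Gamma_1\sqcup\cdots\sqcup\Gamma_k$ is a disjoint union of perfect rooted trees $\Gamma_j$ of height $\hgt(\Gamma)-1$, and $K=\sum_{j=1}^k I_{t-1}(\Gamma_j)$ is a sum of ideals living in pairwise disjoint sets of variables. Crucially, each $I_{t-1}(\Gamma_j)$ is the \emph{top}-path ideal of $\Gamma_j$, i.e. $t-1=\hgt(\Gamma_j)+1$, so the inductive hypothesis applies to it. Letting $S_j$ denote the polynomial ring on the variables of $\Gamma_j$, I would invoke the formula for the regularity of powers of sums of ideals in disjoint variables \cite{nguyen_vu}; iterating it gives
\[
\reg_S\!\left(\frac{S}{K^s}\right)=\max_{\substack{a_1+\cdots+a_k=s+k-1\\ a_j\geq 1}}\ \sum_{j=1}^k\reg_{S_j}\!\left(\frac{S_j}{I_{t-1}(\Gamma_j)^{a_j}}\right).
\]

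Now I would run the induction on $\hgt(\Gamma)$, the base case $\hgt(\Gamma)=1$ being direct: here $K$ is generated by the $k$ leaf-variables, so $K^s$ is a power of an ideal of linear forms and $\reg_S(S/K^s)=s-1$. In the inductive step, set $d=t-1$ and $D=\sum_{\ell_\Gamma(x)=0}^{\hgt(\Gamma)-2}\deg^+_\Gamma(x)$, and note that $D=k+\sum_{j=1}^k D_j$ with $D_j=\sum_{\ell_{\Gamma_j}(x)=0}^{\hgt(\Gamma_j)-2}\deg^+_{\Gamma_j}(x)$, since levels in each $\Gamma_j$ drop by one and $\deg^+_\Gamma(x_0)=k$. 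For every admissible tuple (each $a_j\geq 1$), the inductive hypothesis together with Theorem \ref{th1} gives $\reg_{S_j}(S_j/I_{t-1}(\Gamma_j)^{a_j})=d(a_j-1)+1+D_j$, whence
\[
\sum_{j=1}^k\reg_{S_j}\!\left(\frac{S_j}{I_{t-1}(\Gamma_j)^{a_j}}\right)=d\Big(\textstyle\sum_j a_j-k\Big)+k+\sum_j D_j=d(s-1)+D,
\]
using $\sum_j a_j=s+k-1$. Remarkably this value is independent of the tuple, so the maximum collapses and $\reg_S(S/K^s)=d(s-1)+D$. Substituting into the reduction gives $\reg(R/I_t(\Gamma)^s)=d(s-1)+D+s=t(s-1)+1+D$, and since $1+D=\reg(R/I_t(\Gamma))$ by Theorem \ref{th1}, the claimed equality follows for all $s$.

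The part requiring the most care is the regularity-of-powers-of-sums input and its iteration: I must verify that the disjoint-variables hypothesis genuinely holds (it does, since distinct child-subtrees share no vertices) and that the exponent constraint $a_1+\cdots+a_k=s+k-1$, with all $a_j\geq 1$, propagates correctly through the iteration. The pleasant point, which lets the whole argument go through without separately establishing upper and lower bounds, is that once the inductive formula for each piece is affine in its exponent with the \emph{uniform} slope $d$, the objective becomes constant on the feasible region and the exact value drops out at once. As an alternative to citing \cite{nguyen_vu}, one could instead derive the upper bound from Theorem \ref{simplicial} along the good leaf ordering (as in the proof of Theorem \ref{broommain}) and the matching lower bound from a complete-intersection induced subforest; but the root-stripping reduction above is considerably shorter.
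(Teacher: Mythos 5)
Your proposal is correct and follows essentially the same route as the paper's proof: strip the root via $I_t(\Gamma)^s=x_0^s\,I_{t-1}(\Gamma\setminus\{x_0\})^s$, decompose $\Gamma\setminus\{x_0\}$ into the child perfect subtrees, and combine the inductive hypothesis (on height) with the Nguyen--Vu theorem on regularity of powers of sums of ideals in disjoint variables, which is precisely the paper's citation of \cite[Lemma 2.3]{HTT} and \cite[Theorem 1.1]{nguyen_vu}. The only cosmetic difference is that you make the iterated maximum over tuples $a_1+\cdots+a_k=s+k-1$ explicit and observe that it collapses because each summand is affine in $a_j$ with the same slope $t-1$, a step the paper performs implicitly.
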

\begin{proof}
We use induction on $\hgt(\Gamma).$ The case for $\hgt(\Gamma)=1$ is trivial. So, we assume that $\hgt(\Gamma)>1$. As we have seen in Theorem \ref{th1} that $I_t(\Gamma)=(x_0)I_{t-1}(\Gamma \setminus\{x_0\})$. Therefore, $I_t(\Gamma)^s=(x_0^s)I_{t-1}(\Gamma \setminus\{x_0\})^s.$ Let  $x_{i_1},\ldots,x_{i_k}$ be descents of $x_{0}$ in $\Gamma$. Then $\Gamma \setminus \{x_0\}$ is the disjoint union of perfect rooted trees, say, $(\Gamma_1,x_{i_1}),\ldots, (\Gamma_k,x_{i_k})$ each of height $\hgt(\Gamma)-1.$ Note that $t-1=\hgt(\Gamma)=\hgt(\Gamma_j)+1$ for each $j$. Thus, by induction, $$\reg\left(\frac{R}{I_{t-1}(\Gamma_j)^s}\right) = (t-1)(s-1)+ 1+ \sum_{\ell_{\Gamma_j}(x)=0}^{\hgt(\Gamma_j)-2} \deg_{\Gamma_j}^+(x)= (t-1)(s-1)+\reg\left(\frac{R}{I_{t-1}(\Gamma_j)}\right).$$  Now, applying \cite[Lemma 2.3]{HTT} and \cite[Theorem 1.1]{nguyen_vu} repeatedly, we get that \begin{eqnarray*}
\reg\left(\frac{R}{I_t(\Gamma)^s}\right)&=& s+(t-1)(s-1)+\sum\limits_{j=1}^k\reg\left(\frac{R}{I_{t-1}(\Gamma_j)}\right)\\&= &s+(t-1)(s-1)+\sum_{j=1}^k \left( 1+ \sum\limits_{\ell_{\Gamma_j}(x)=0}^{\hgt(\Gamma_j)-2} \deg_{\Gamma_j}^{+}(x) \right)\\&=& t(s-1)+1+k+\sum\limits_{j=1}^k\sum\limits_{\ell_{\Gamma_j}(x) = 0}^{\hgt({\Gamma_j})-2}\deg_{\Gamma_j}^+(x)\\&=& t(s-1)+1+\sum_{\ell_{\Gamma}(x)=0}^{\hgt(\Gamma)-2} \deg_{\Gamma}^+(x) =t(s-1)+\reg\left(\frac{R}{I_t(\Gamma)}\right), 
\end{eqnarray*} where the last equality follows from Theorem \ref{th1}. Hence, the assertion follows. 
\end{proof}

Theorem \ref{broommain}, Theorem \ref{perfectr}, and computational evidence lead us to pose the following conjecture.
\begin{conjecture}
Let $\Delta$ be a $d$-dimensional simplicial tree on the vertex set $[n].$ Then, $$\reg\left(\frac{R}{I(\Delta)^s}\right) \leq (d+1)(s-1)+\reg\left(\frac{R}{I(\Delta)}\right) \text{ for all } s \geq 1.$$
\end{conjecture}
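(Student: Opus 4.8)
The plan is to use the recursive estimate of Theorem \ref{simplicial} as an engine, inside a double induction: an outer induction on the exponent $s$ and, within each step, an inner induction on the number $r$ of facets. Fix a good leaf ordering $F_1,\ldots,F_r$ of the facets of $\Delta$ and adopt Notation \ref{not-1}, so that $m_i=\prod_{j\in F_i}x_j$, $d_i=\deg(m_i)=\dim(F_i)+1\le d+1$, $\Delta_i=\langle F_1,\ldots,F_i\rangle$, and $J_i=(m_{i+1},\ldots,m_r)$. Theorem \ref{simplicial} bounds $\reg(R/I(\Delta)^{s+1})$ by the maximum of three groups of quantities, and it suffices to show that each group is at most $(d+1)s+\reg(R/I(\Delta))$, which is exactly the target bound for exponent $s+1$.

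The outer terms are immediate. For the term $d_r+\reg(R/I(\Delta)^s)$, the outer induction hypothesis gives $\reg(R/I(\Delta)^s)\le (d+1)(s-1)+\reg(R/I(\Delta))$, and since $d_r\le d+1$ this is at most $(d+1)s+\reg(R/I(\Delta))$; the remaining term $\reg(R/I(\Delta))$ is trivially within range. The entire difficulty is therefore concentrated in the middle terms $d_i+\reg\left(\frac{R}{I(\Delta_i)^{s}+(J_i:m_i)}\right)$ for $1\le i\le r-1$.

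To handle these, I would analyse the colon ideal $K_i:=J_i:m_i$ combinatorially: it is generated by the monomials $\prod_{j\in F_k\setminus F_i}x_j$ with $k>i$, which record the facets lying beyond the good leaf $F_i$. When these generators involve variables disjoint from those of $\Delta_i$ --- as happens for brooms and perfect rooted trees, cf.\ the proofs of Theorem \ref{broommain} and Theorem \ref{perfectr} --- the regularity splits as $\reg(R/(I(\Delta_i)^s+K_i))=\reg(R/I(\Delta_i)^s)+\reg(R/K_i)$. The inner induction on $r$, applied to the strictly smaller sub-forest $\Delta_i$ (of dimension at most $d$, and for which the conjectural bound may be assumed componentwise), yields $\reg(R/I(\Delta_i)^s)\le (d+1)(s-1)+\reg(R/I(\Delta_i))$; using $d_i\le d+1$ the middle term collapses to $(d+1)s+\reg(R/I(\Delta_i))+\reg(R/K_i)$. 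One is then reduced to the purely structural inequality $\reg(R/I(\Delta_i))+\reg(R/K_i)\le \reg(R/I(\Delta))$, expressing that regularity does not increase as facets are peeled off along a good leaf ordering.

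The main obstacle --- and the reason the statement is posed as a conjecture rather than proved outright --- is that for a general, non-pure simplicial tree the disjointness of variables can fail: $I(\Delta_i)$ and $K_i$ may share variables, so the clean additive splitting of regularity is unavailable and one instead needs an effective, uniform bound for the mixed ideal $I(\Delta_i)^s+K_i$, whose generators combine a power of a facet ideal with a colon ideal. Supplying such a bound, together with the monotonicity-along-good-leaves estimate above, is the key missing ingredient. A promising route is to first settle the pure case, where every $d_i=d+1$ and the recursion is cleanest, and then to reduce the non-pure case to it by attaching free vertices to equalise facet dimensions while carefully tracking the resulting change in $\reg(R/I(\Delta))$, in the spirit of the padding argument used in Corollary \ref{cormain}.
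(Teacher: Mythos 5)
The statement you are addressing is precisely the paper's concluding Conjecture; the paper does not prove it in general, but only poses it and verifies it for two special families, namely broom graphs (Theorem \ref{broommain}) and perfect rooted trees with $t=\hgt(\Gamma)+1$ (Theorem \ref{perfectr}), both via the recursive estimate of Theorem \ref{simplicial}. Your proposal follows exactly that engine, so your overall strategy coincides with the paper's partial-progress machinery; but your write-up is not a proof, and you acknowledge this yourself. Two steps are genuinely missing. First, the additive splitting $\reg(R/(I(\Delta_i)^{s}+K_i))=\reg(R/I(\Delta_i)^{s})+\reg(R/K_i)$, with $K_i=J_i:m_i$, holds only when $I(\Delta_i)$ and $K_i$ are generated in disjoint sets of variables. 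For a general simplicial tree this fails, even in the pure case, because a facet $F_k$ with $k>i$ may meet several facets of $\Delta_i$, so the generators $\prod_{j\in F_k\setminus F_i}x_j$ of $K_i$ involve variables of $\Delta_i$. The disjointness exploited in the paper's Theorem \ref{broommain} is a special feature of the ordering constructed in Notation \ref{not-2} for brooms, not of good leaf orderings in general; purity alone does not restore it, so your proposed reduction to the pure case by padding with free vertices does not address the real obstruction. Second, the inequality $\reg(R/I(\Delta_i))+\reg(R/K_i)\le \reg(R/I(\Delta))$ is asserted, not proved. The only monotonicity tool available (Lemma \ref{lemm1}, i.e.\ \cite[Lemma 2.5]{HaWood}) applies to induced subcomplexes, whereas $\Delta_i$ need not be an induced subcomplex of $\Delta$, and $K_i$ is a colon ideal rather than the facet ideal of an induced subcomplex; no result in the paper or cited by it yields this inequality.

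In short, your proposal reproduces the paper's own approach to its special cases and correctly isolates where the difficulty is concentrated, but the two ingredients above are exactly the missing content: the mixed ideals $I(\Delta_i)^{s}+K_i$ admit no known uniform regularity bound, and the peeling-off monotonicity is unestablished. Leaving them unproved means the statement remains a conjecture, which is also the paper's own position.
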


\bibliographystyle{plain}
\bibliography{refs_reg}
\end{document}